\setlist{leftmargin=0.5\leftmargin}
\newtheorem{proposition}{Proposition}
\newtheorem{theorem}[proposition]{Theorem}
\newtheorem{lemma}[proposition]{Lemma}
\newtheorem{corollary}[proposition]{Corollary}
\theoremstyle{remark}
\newtheorem{remark}[proposition]{Remark}
\theoremstyle{definition}
\newtheorem{definition}[proposition]{Definition}
\numberwithin{equation}{section}
\numberwithin{proposition}{section}
\numberwithin{figure}{section}
\numberwithin{table}{section}
\newcommand{\Z}{\mathbb{Z}}
\newcommand{\N}{\mathbb{N}}
\newcommand{\R}{\mathbb{R}}
\newcommand{\E}{\mathbb{E}}
\renewcommand{\P}{\mathbb{P}}
\newcommand{\ep}{\varepsilon}
\newcommand{\eps}{\varepsilon}
\renewcommand{\le}{\leqslant}
\renewcommand{\ge}{\geqslant}
\renewcommand{\leq}{\leqslant}
\renewcommand{\geq}{\geqslant}
\renewcommand{\subset}{\subseteq}
\renewcommand{\bar}{\overline}
\newcommand{\Ll}{\left}
\newcommand{\Rr}{\right}
\renewcommand{\d}{\mathrm{d}}
\newcommand{\dr}{\partial}
\newcommand{\mcl}{\mathcal}
\newcommand{\de}{\delta}
\newcommand{\si}{\sigma}
\DeclareMathOperator{\tr}{tr}
\DeclareMathOperator{\supp}{supp}
\newcommand{\la}{\left\langle}
\newcommand{\ra}{\right\rangle}
\newcommand{\D}{{D}}
\renewcommand{\S}{S}
\newcommand{\aq}{\overrightarrow{q}}
\newcommand{\ap}{\overrightarrow{p}}
\renewcommand*{\dot}[1]{\accentset{\mbox{\large\bfseries .}}{#1}}
\newcommand{\Leb}{\mathsf{Leb}}
\newcommand{\comple}{{\complement}}
\newcommand{\T}{T}
\newcommand{\one}{\mathbf{1}}
\newcommand{\brho}{\mathbf{h}}
\newcommand{\cM}{\mcl M}
\newcommand{\bi}{\mathbf{i}}
\newcommand{\sS}{\mathscr{S}}
\newcommand{\fR}{{\mathfrak{R}}}
\newcommand{\s}{{\mathsf{s}}}
\begin{document}

\author{Hong-Bin Chen}
\address[Hong-Bin Chen]{Institut des Hautes \'Etudes Scientifiques, France}
\email{hbchen@ihes.fr}

\author{Jean-Christophe Mourrat}
\address[Jean-Christophe Mourrat]{Department of Mathematics, ENS Lyon and CNRS, Lyon, France}
\email{jean-christophe.mourrat@ens-lyon.fr}

\keywords{}
\subjclass[2010]{}
\date{\today}

\title[Simultaneous RSB for vector spin glasses]{Simultaneous replica-symmetry breaking \\
for vector spin glasses}

\begin{abstract}
We consider mean-field vector spin glasses with possibly non-convex interactions. Up to a small perturbation of the parameters defining the model, the asymptotic behavior of the Gibbs measure is described in terms of a critical point of an explicit functional. In this paper, we study some properties of these critical points. Under modest assumptions ensuring that different types of spins interact, we show that the replica-symmetry-breaking structures of the different types of spins are in one-to-one correspondence with one another. For instance, if some type of spins displays one level of replica-symmetry breaking, then so do all the other types of spins. This extends the recent results of \cite{bates2022free, bates2022crisanti} that were obtained in the case of multi-species spherical spin glasses with convex interactions.
\end{abstract}

\maketitle

\section{Introduction}

We consider general multi-species or vector spin glasses with possibly non-convex interactions. When the interaction is indeed non-convex, a complete identification of the limit free energy is still lacking. Despite this, it was shown in \cite{chen2023free} that the limit free energy and overlap distribution must satisfy strong constraints, as they must admit a representation in terms of a critical point of an explicit functional. 
The goal of the present paper is to demonstrate that this representation allows us to infer physically relevant information about the limit behavior of the model. We do so in the context of the property of simultaneous replica-symmetry breaking mentioned as a question in \cite{pan.multi} and investigated in \cite{bates2022free, bates2022crisanti} for multi-species spherical spin glasses with convex interactions. Roughly speaking, it was shown in \cite{bates2022free, bates2022crisanti} that for such models and under mild conditions ensuring the effective coupling between the species, it must be that all species of spins share the same replica-symmetry-breaking structure. For instance, it cannot be that one species is replica-symmetric (its overlap is concentrated) while another one has one level of symmetry breaking (its overlap is asymptotically supported on two values). We show here that this property extends to non-spherical vector spin glasses with possibly non-convex interactions.

We fix an integer $D \ge 1$, and let $(H_N(\sigma))_{\sigma \in (\R^N)^D}$ be a centered Gaussian field such that, for every $\sigma = (\sigma_1,\ldots, \sigma_D)$ and $\tau = (\tau_1, \ldots, \tau_D) \in (\R^N)^D$, we have
\begin{equation}
\label{e.def.xi}
\E \Ll[ H_N(\sigma) H_N(\tau) \Rr] = N \xi \Ll( \frac{\si \tau^\intercal}{N} \Rr) ,
\end{equation}
where $\xi \in C^\infty(\R^{D\times D}; \R)$ is a smooth function admitting an absolutely convergent power-series expansion with $\xi(0) = 0$, and where $\sigma \tau^\intercal$ denotes the matrix of scalar products 
\begin{equation}
\label{e.def.sigma.tau}
\sigma \tau^\intercal = (\sigma_d \cdot \tau_{d'})_{1 \le d,d' \le D}.
\end{equation}
The notation in \eqref{e.def.sigma.tau} is natural if we think of $\si$ and $\tau$ as $D$-by-$N$ matrices, and we often identify $(\R^N)^D$ with $\R^{D \times N}$. We think of the Gaussian fields that can be represented in the form of \eqref{e.def.xi} as essentially encoding the most general class of (non-sparse) mean-field spin glasses; the mean-field character of the model is captured by the assumption that the covariance in \eqref{e.def.xi} depends only on the matrix of scalar products between the different types of spins. The set of functions $\xi$ such that there exists a Gaussian random field satisfying~\eqref{e.def.xi}, together with an explicit construction of the Gaussian process $H_N$, can be found in \cite[Proposition~6.6]{mourrat2023free} and \cite[Subsection~1.5]{chen2023free}. We do \emph{not} impose any convexity assumption on $\xi$. 

We give ourselves a probability measure $P_1$ with compact support in $\R^D$, and for each integer $N \ge 1$, we denote by $P_N = P_1^{\otimes N}$ the $N$-fold tensor product of $P_1$. We think of $P_N$ as a probability measure on $\R^{D \times N} \simeq (\R^N)^D$. We will often also assume that
\begin{equation}
\label{e.ass.full.support}
\mbox{the affine space spanned by the support of $P_1$ is the full space $\R^D$.}
\end{equation}
This entails no loss of generality, for whenever it is not satisfied, we can always redefine the model by restricting our attention to the affine space spanned by the support of $P_1$.

For each $\beta \ge 0$, we are interested in the large-$N$ behavior of the Gibbs measure whose Radon--Nikodym derivative with respect to $P_N$ is proportional to $\exp(\beta H_N)$, and of the corresponding free energy. 
For technical reasons, we will in fact consider a more complicated ``enriched'' version of the free energy. We denote by $S^D_+$ the cone of positive semidefinite matrices.
We let $\mcl Q$ be the space of increasing and right-continuous paths $q : [0,1) \to S^D_+$ with left limits; a path $q : [0,1) \to S^D_+$ is said to be increasing if for every $u \le v \in [0,1)$, we have $q(u) \le q(v)$ in the sense that $q(v) - q(u) \in S^D_+$. For every $r \in [1,\infty]$, we write $\mcl Q_r = \mcl Q \cap L^r([0,1], S^D)$; we also often use the shorthand $L^r$ for $L^r([0,1], S^D)$. The enriched free energy $\bar F_N(t,q)$ is defined for every $t \in \R_+$ and $q \in \mcl Q_1$. When the path~$q$ is constantly equal to $h \in S^D_+$, this free energy is given by 
\begin{multline}  
\label{e.def.simple.F}
\bar F_N(t,h) 
\\
= -\frac 1 N \E \log \int \exp \Ll( \sqrt{2t} H_N(\sigma) - t N \xi\Ll(\frac{\sigma\sigma^\intercal}{N}\Rr) + \sqrt{2h} z \cdot \sigma - h \cdot \sigma \sigma^\intercal \Rr) \, \d P_N(\sigma),
\end{multline}
where $z = (z_1,\ldots, z_D)$ is a random element of $(\R^{N})^D \simeq \R^{D\times N}$ with independent standard Gaussian entries, independent of $(H_N(\sigma))_{\sigma \in (\R^N)^D}$, and $\E$ denotes the expectation with respect to all randomness. 
In~\eqref{e.def.simple.F} and throughout the paper, whenever $a$ and $b$ are two matrices of the same size, we denote by $a \cdot b = \tr(a b^\intercal)$ the entrywise scalar product, and by $|a| = (a \cdot a)^{1/2}$ the associated norm. 
The additional term $t N \xi\Ll(\frac{\sigma\sigma^\intercal}{N}\Rr)$ in \eqref{e.def.simple.F} is introduced as a technical convenience; when $\xi$ is convex, a procedure for removing it a posteriori is explained in \cite{chen2024free, mourrat2020extending}; otherwise, more work is necessary, but at least this term is actually constant for some non-convex models of interest, such as the bipartite model considered e.g.\ in \cite{mourrat2021nonconvex} with $\pm 1$ spins.
For non-constant paths $q$, the definition of $\bar F_N(t,q)$ involves replacing the random magnetic field $\sqrt{2h} z$ by a more complicated external field  comprising an ultrametric structure, and the parameters of this structure are encoded into the path~$q$. We denote by $\la \cdot \ra$ the Gibbs measure associated with the free energy $\bar F_N(t,q)$; the precise definitions of the free energy and the Gibbs measure are given in Subsection~\ref{ss.def.free.energy} below. We stress that the Gibbs measure~$\la \cdot \ra$ is still random as it depends on the realization of the Gaussian random field~$H_N$ and of the external magnetic field. Although this is not displayed in the notation, it also depends on the choice of the parameters $N$, $t$ and $q$. We denote by $\sigma$ the canonical spin variable under $\la \cdot \ra$, and by $\sigma'$ an independent copy of $\sigma$ under $\la \cdot \ra$, also called a replica. In the limit of large $N$, we can probe the geometry of the Gibbs measure by studying the law of the overlap $\sigma \sigma'^\intercal/N$ under $\E \la \cdot \ra$ (see for instance \cite[Section 5.7]{HJbook} for a more thorough discussion of this point).  One of the main results of \cite{chen2023free} states that up to a small perturbation of the energy function, any subsequential limit of the law of the overlap must be encoded by a critical point of an explicit functional which we describe next. 

To motivate the definition of this functional, we start by pointing out that, whenever the function $\xi$ is convex over $S^D_+$, we have (see \cite[Theorem~1.1]{chen2023free}) that $\bar F_N$ converges pointwise to the viscosity solution $f$ to the equation
\begin{equation}
\label{e.hj}
\partial_t f - \int_0^1 \xi(\partial_q f) = 0,
\end{equation}
with initial condition
\begin{equation}
\label{e.def.psi}
\psi(q) = \lim_{N \to + \infty} \bar F_N(0,q). 
\end{equation}
Moreover, whether or not $\xi$ is convex,  every subsequential limit $f$ of $\bar F_N$ satisfies the relation \eqref{e.hj} at every point of differentiability of $f$, by \cite[Proposition~7.2]{chen2023free} (see also  \cite{chen2022hamilton, mourrat2021nonconvex, mourrat2023free} for related works exploring the links between the limit free energy and the partial differential equation in \eqref{e.hj}, and \cite{HJbook} for a book presentation). 
The derivative $\partial_q f$ in \eqref{e.hj} is understood in the sense that for every $q' \in \mcl Q_2$ and as $\ep > 0$ tends to~$0$, we have
\begin{equation*}  f(t,q + \ep(q'-q)) - f(t,q) = \ep \int_0^1 (q'-q)(u) \cdot \dr_q f(t,q,u) \, \d u + o(\ep),
\end{equation*}
and the integral in \eqref{e.hj} can be written more explicitly as $\int_0^1 \xi(\dr_q f(t,q,u)) \, \d u$. 
Under our assumption that $P_N=P_1^{\otimes N}$, the quantity $\bar F_N(0,q)$ in fact does not depend on $N$ (see \cite[Proposition~3.2]{mourrat2023free}), so we have $\psi(q) = \bar F_1(0,q)$ in this case. Although we will not study spherical models here, we point out that when $P_N$ is the uniform measure on the sphere of radius $\sqrt{N}$ in~$\R^N$, an explicit expression for $\psi$ is given in~\cite[Proposition~3.1]{mourrat2022parisi}.

For every $t \ge 0$, $q,q' \in \mcl Q_2$, and $p \in L^2 (= L^2([0,1], S^D))$, we set
\begin{align*}
\mcl J_{t,q}(q',p) & = \psi(q') + \int_0^1 p(u) \cdot (q(u) - q'(u)) \, \d u + t \int_0^1 \xi(p(u)) \, \d u 
\\
\notag & = \psi(q') + \int_0^1 p \cdot(q-q') + t \int_0^1 \xi(p).
\end{align*}
The functional $\mcl J_{t,q}$ is closely related to the Hamilton--Jacobi equation in \eqref{e.hj}, so we call it the \emph{Hamilton--Jacobi functional}. A first simple link between the two objects is the observation that, for each fixed $q'$ and $p$, the mapping $(t,q) \mapsto \mcl J_{t,q}(q',p)$ is a solution to~\eqref{e.hj}. More profound links are presented in detail in \cite[Sections~3.4 and 3.5]{HJbook}. We say that a pair $(q',p) \in \mcl Q_2 \times L^2$ is a \emph{critical point} of $\mcl J_{t,q}$ if
\begin{equation}
\label{e.crit.point}
q = q' - t \nabla \xi(p) \quad \text{ and } \quad p = \partial_q \psi(q').
\end{equation}
The first condition is the critical-point condition with respect to variations of the parameter~$p$, while the second one relates to variations with respect to the parameter $q'$. In a nutshell, the main result of \cite{chen2023free} is that possibly up to a small perturbation of the parameters of the model and up to the extraction of a subsequence, we have that the overlap matrix $\sigma \sigma'^\intercal/N$ converges in law to $p(U)$, where $(q',p) \in \mcl Q_\infty$ is a critical point of~$\mcl J_{t,q}$ and $U$ is a uniform random variable over $[0,1]$. 

We therefore seek to study the properties of critical points of $\mcl J_{t,q}$. Prior works related to this goal include \cite{auffinger2015properties, auffinger2020sk, auffinger2019existence, bates2019replica, dey2021fluctuation, jagannath2017low, jagannath2018bounds, montanari2003nature, talagrand2000multiple, talagrand2006free, talagrand2006parisi, Tbook1, Tbook2}. Much of the attention there is paid to the number of steps of replica-symmetry breaking. In our context, the number of steps of replica-symmetry breaking, plus one, is the number of different values taken by the path~$p$. 

For multi-species spherical models with convex interactions, the property of simultaneous replica-symmetry breaking has been investigated in \cite{bates2022free, bates2022crisanti} (see also \cite{ko2019crisanti} concerning the Crisanti-Sommers formula for the limit free energy). This refers to the idea that, provided that the different species of spins are effectively coupled in the model, the supports of the limit laws of the overlaps of the different species are in bijective correspondence with one another. In other words, all species share the same number of levels of replica-symmetry breaking. In the present paper, we extend this result to non-spherical models with possibly non-convex interactions. 

For clarity, we appeal to a simple assumption on $\xi$ to guarantee the effective coupling between the different types of spins; more general results will be discussed later on. As explained in \cite[Section~6]{mourrat2023free}, for $\xi$ to be the covariance of a Gaussian field as in \eqref{e.def.xi}, it is necessary that for every $a,b \in S^D_+$, one has $\nabla \xi(a + b) - \nabla \xi(a) \in S^D_+$. A sufficient assumption to ensure that the different types of spins are indeed coupled is that 
\begin{equation}
\label{e.ass.coupling}
\mbox{for every $a,b \in S^D_+$ with $b \neq 0$, we have $    \frac{\d}{\d \eps}\Big|_{\eps=0}\nabla\xi(a+\eps b)\in S^D_{++}$,}
\end{equation}
where $S^D_{++}$ denotes the set of positive definite matrices.
As explained in more details in Remark~\ref{r.example}, one possible way to ensure the validity of this assumption is to add to the energy function $H_N$ a term of the form
\begin{equation}  
\label{e.extra.energy}
\sum_{d,d' = 1}^D \sqrt{c_{d,d'}} \sum_{i,j = 1}^N W_{i,j}^{d,d'} \sigma_{d,i} \sigma_{d',j},
\end{equation}
where $(c_{d,d'})_{d,d' \le D}$ are in $(0,+\infty)$ and $(W_{i,j}^{d,d'})_{i,j \le N, d,d' \le D}$ are independent standard Gaussian random variables, independent of $H_N$. 
\begin{theorem}[Simultaneous RSB]
\label{t.main}
Suppose that the assumptions \eqref{e.ass.full.support} and \eqref{e.ass.coupling} hold. Let $t > 0$, $q \in \mcl Q_1$, and let $(q',p) \in \mcl Q_{\infty}^2$ be a critical point of $\mcl J_{t,q}$. For every $s < s' \in [0,1]$, if $p(s') - p(s) \neq 0$, then $p(s') - p(s) \in S^D_{++}$. 
\end{theorem}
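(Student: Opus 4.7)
Set $b := p(s') - p(s)$ and assume $b \neq 0$; since $p$ is monotone as an element of $\mcl Q$, we have $b \in S^D_+$, and the goal is to upgrade this to $b \in S^D_{++}$. The strategy uses the two critical-point conditions in~\eqref{e.crit.point} in turn: the first (together with~\eqref{e.ass.coupling}) to produce a strict PD jump in $q'$, and the second (together with~\eqref{e.ass.full.support}) to transfer this strict PD jump to $p$.

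\emph{Strict PD jump of $q'$.} For $\lambda \in [0, 1]$, set $a_\lambda := p(s) + \lambda b \in S^D_+$. Writing
\[
\nabla\xi(p(s')) - \nabla\xi(p(s)) \;=\; \int_0^1 \frac{\d}{\d\eps}\Big|_{\eps = 0} \nabla\xi(a_\lambda + \eps b) \, \d\lambda,
\]
each integrand lies in $S^D_{++}$ by~\eqref{e.ass.coupling} (applied at $a = a_\lambda$ in direction $b$), so the integral does too. Evaluating the first critical-point identity $q = q' - t \nabla\xi(p)$ at $s'$ and $s$ and subtracting,
\[
q'(s') - q'(s) \;=\; \bigl( q(s') - q(s) \bigr) + t \bigl( \nabla\xi(p(s')) - \nabla\xi(p(s)) \bigr),
\]
with the first summand in $S^D_+$ by monotonicity of $q \in \mcl Q_1$ and the second in $S^D_{++}$ by the previous display (and $t > 0$). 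Hence $q'(s') - q'(s) \in S^D_{++}$.

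\emph{Transferring strict positivity to $p$.} To conclude $p(s') - p(s) \in S^D_{++}$, I would invoke the Ruelle-probability-cascade representation of $\psi(q') = \bar F_1(0, q')$: the value $\partial_q \psi(q', u)$ equals the second-moment matrix $\E[ m(u) m(u)^\intercal ]$ of the level-$u$ magnetization $m(u) = \la \sigma \ra_{\alpha(u)}$ of the single-spin cascade Gibbs measure under $P_1$ with ultrametric external field encoded by $q'$. The process $(m(u))_{u \in [0,1]}$ is a matrix-valued martingale, so by the tower rule, for every unit $v \in \R^D$,
\[
v^\intercal \bigl( p(s') - p(s) \bigr) v \;=\; \E\bigl[ \var \bigl( v \cdot m(s') \mid \F_s \bigr) \bigr],
\]
where $\F_s$ is the cascade filtration up to level $s$. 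By the previous step, the fresh Gaussian layer added to the external field between levels $s$ and $s'$ has strict-PD covariance, in particular nondegenerate in direction $v$; by~\eqref{e.ass.full.support}, $v \cdot \sigma$ is non-constant under $P_1$. A Gaussian integration-by-parts (equivalently, strict log-convexity of the single-spin partition function in the external field direction $v$) then forces the conditional variance above to be strictly positive on a set of positive probability, giving $v^\intercal (p(s') - p(s)) v > 0$ for every nonzero $v$, and hence $p(s') - p(s) \in S^D_{++}$.

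\emph{Main obstacle.} The last step is the crux. It requires (i) a clean cascade representation of $\partial_q \psi$ that is compatible with the path-space formulation of this paper, and (ii) a strict-monotonicity lemma asserting that adding a layer of Gaussian noise with PD covariance to the ultrametric external field strictly shifts the Gibbs magnetization in every direction, using~\eqref{e.ass.full.support}. An abstract alternative would be to prove directly that $\psi$ is strictly convex along every ``non-degenerate'' direction of perturbation (as a consequence of~\eqref{e.ass.full.support}) and invoke strict monotonicity of its subgradient; either route is delicate in the infinite-dimensional path space $\mcl Q_\infty$.
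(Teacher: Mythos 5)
Your overall strategy is the one the paper follows: use \eqref{e.ass.coupling} through the first critical-point relation to produce strict positivity in $q' = q + t\nabla\xi(p)$, and use \eqref{e.ass.full.support} through the second relation to transfer it back to $p$. Your first step is complete and correct. The second step, which you rightly identify as the crux, is where essentially all of the work lies, and it is not carried out: the representation of the increments of $\partial_q\psi(q')$ as quadratic variation of a magnetization martingale is precisely the content of Lemmas~\ref{l.rep_grad_psi} and~\ref{l.rep_grad_psi_2} combined with Lemma~\ref{l.R,A_rel}, whose proofs occupy Sections~\ref{s.decomp}--\ref{s.proof} (decomposition of the matrix-valued path into a Lipschitz path $L$ and a quantile function $\alpha$, the Parisi PDE, its characteristics $X$, and It\^o calculus giving $R(t')-R(t)=2\int_t^{t'}\E[A_r^\intercal\dot L_{q'}(r)A_r]\,\d r$ with $A_r=\nabla^2\Phi(r,X_r)$). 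Asserting that such a representation exists is not a proof of it.

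There is also a genuine logical gap in how you propose to feed step 1 into step 2. The conditional-variance increment is not controlled by the single matrix $q'(s')-q'(s)$: in the correct formula it equals $\int_{\alpha^{-1}(s)}^{\alpha^{-1}(s')}\E[(A_r v)^\intercal\dot L_{q'}(r)(A_r v)]\,\d r$, i.e.\ each infinitesimal layer of fresh field is filtered through the level-dependent susceptibility $A_r$. Knowing only that the aggregate $\int\dot L_{q'}(r)\,\d r$ is positive definite does not preclude each individual $\dot L_{q'}(r)$ from being degenerate exactly in the direction $A_r v$, since the matrices $A_r$ can rotate with $r$. What is needed, and what the paper extracts from \eqref{e.ass.coupling}, is the pointwise a.e.\ bound $\dot L_{q'}(r)\ge tc(r)zz^\intercal$ on a positive-measure set of levels, combined with $A_r\in S^D_{++}$ from Lemma~\ref{l.convex_Phi} (which is where \eqref{e.ass.full.support} enters, exactly as your strict log-convexity remark anticipates). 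Finally, your sketch does not address the passage from the left-continuous version $\ap$ to the right-continuous $p$ appearing in the theorem, which requires a separate treatment of jumps (Corollary~\ref{c.jump_iff} and the second part of Proposition~\ref{p.sim_RSB}).
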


Recall that $p \in \mathcal{Q}_\infty$ is originally defined on $[0,1)$. In Theorem~\ref{t.main}, $p(1)$ is understood as $\lim_{u \to 1} p(u)$, as explained around~\eqref{e.q(1)=}.

Let us first explain why Theorem~\ref{t.main} relates to the statement of simultaneous replica-symmetry-breaking. Recall that by \cite[Theorem~1.4]{chen2023free}, possibly up to a small perturbation of the parameters and up to the extraction of a subsequence, the law of the overlap $\sigma \sigma'^\intercal/N$ converges to $p(U)$, where $(q',p) \in \mcl Q_\infty$ is a critical point of $\mcl J_{t,q}$ and $U$ is a uniform random variable over $[0,1]$. Recalling that $p$ is a matrix-valued path, we denote by $p_1,\ldots, p_D$ its diagonal entries, so that the limit law of the overlap of the $d$-th spin type is $p_d(U)$. Suppose that the $d$-th spin type has at least $K$ levels of replica-symmetry breaking. This means that $p_d$ takes at least $K+1$ values, so we can find $0 \le s_0 < s'_0 < s_1 < s'_1 < \cdots s_K < s'_K\le 1$ such that for each $k \in \{0,\ldots, K\}$, we have that $p_d(s_k) < p_d(s_k')$. An application of Theorem~\ref{t.main} then yields that all the other spin types also have at least $K$ levels of replica-symmetry breaking. Since this argument applies for every choice of $d \in \{1, \ldots, D\}$, we conclude that the paths $(p_d)_{d \le D}$ all take exactly the same number of values, as desired. 

The assumption \eqref{e.ass.coupling} is not necessary to ensure the validity of the phenomenon of simultaneous replica-symmetry breaking, and more general results will be proved below. For instance, it suffices that the energy function contain a term of the form of \eqref{e.extra.energy} with $c_{d,d'}$ non-zero only for $d' = d + 1 \ (\mathrm{mod}\ D)$. What is key is that there is a ``transmission chain'' from each spin type to every other spin type; see also the notion of ``$y$-to-$z$ coupled'' in Proposition~\ref{p.sim_RSB} and Remark~\ref{r.example}. 

We also prove an analogue of Theorem~\ref{t.main} for multi-species models, which is perhaps simpler to interpret as in this context we can ignore the off-diagonal part of the paths. In other words, the statement in this case is directly at the level of the paths $(p_1,\ldots, p_D)$ that encode the respective laws of the overlaps for each of the different species; see Theorem~\ref{t.sim_RSB_MS} for more precision.

As a side remark, we mention that the convergence in law of the overlap provided by \cite[Theorem~1.4]{chen2023free} is only known to be valid with the caveats stated above (that is, up to an arbitrarily small perturbation of the parameters and up to the extraction of a subsequence). These caveats cannot be removed altogether, as for instance, for the SK model ($D = 1$, $\xi(r) = r^2$, $P_1 = \frac 1 2 (\de_1 + \de_{-1})$), symmetry considerations yield that the law of the overlap is symmetric under the transformation $z \mapsto -z$, which would be in contradiction with the stated convergence in law at low temperature, where the limit law is non-trivial and supported in $[0,1]$. Yet, when $\xi$ is convex and the path $q$ is chosen to be strictly increasing, the convergence is shown to hold with no caveats in \cite[Corollary~8.7]{chen2023free}. This statement is analogous to the one obtained for the scalar models called generic in \cite[Section~3.7]{pan}.

\medskip 

\noindent \textbf{Organization of the paper.} In Section~\ref{s.cascade}, we recall the construction of the free energy $\bar F_N(t,q)$ based on Poisson--Dirichlet cascades and present some basic properties of these objects. For our purposes, we need to manipulate possibly discontinuous matrix-valued paths. In Section~\ref{s.decomp}, we introduce convenient decompositions of such a path as the composition of a Lipschitz matrix-valued path and a quantile function. Section~\ref{s.parisi.pde} presents fundamental properties of the Parisi PDE, which appears in the explicit calculation of the initial condition $\psi$ in \eqref{e.def.psi}. In Section~\ref{s.proof}, we obtain convenient representations of the derivative of $\psi$, which may be of independent interest, and we use them to prove Theorem~\ref{t.main}. The final section covers the case of multi-species models.

\section{Continuous cascades}\label{s.cascade}

In this section, we recall definitions and properties of continuous Poisson--Dirichlet cascades and associated Gaussian processes. We also recall the definition of the free energy with an external field parameterized by a matrix-valued path, which generalizes~\eqref{e.def.simple.F}.

\subsection{Definition and properties of the continuous
cascade}The existence of the following objects has been explained in the beginning of~\cite[Section~4]{chen2023free}.
There is an infinite-dimensional separable Hilbert space $\mathfrak{H}$ and a random probability measure $\mathfrak{R}$ on the unit sphere of $\mathfrak{H}$ such that the following holds. We denote the inner product in $\mathfrak{H}$ by $\wedge$ and let $\Ll(\brho^l\Rr)_{l\in\N}$ be an sequence of independent random variables sampled from $\mathfrak{R}$. Then, under $\P\mathfrak{R}^\N$, $\brho^l\wedge \brho^{l'}$ is distributed uniformly on $[0,1]$ for every distinct pair $l$ and $l'$. Moreover, for almost every realization of $\mathfrak{R}$, the support of $\mathfrak{R}$ satisfies the following properties: (i) $\supp\mathfrak{R}$ is a subset of the unit sphere; (ii) $\brho\wedge\brho'\geq 0$ for all $\brho,\brho'\in\supp\mathfrak{R}$; (iii) $\brho\wedge\brho'\geq \min\Ll(\brho\wedge\brho'',\brho''\wedge\brho'\Rr)$, for all $\brho,\brho',\brho''\in \supp\mathfrak{R}$.

By~\cite[Proposition~4.1]{chen2023free}, we know the existence of the following Gaussian process. For almost every realization of $\mathfrak{R}$ and every $q\in\mcl Q_\infty$, there is an $\R^\D$-valued centered Gaussian process $\Ll(w^q(\brho)\Rr)_{\brho\in\supp\mathfrak{R}}$ such that for every $\brho,\brho'\in\supp\mathfrak{R}$,
\begin{align}\label{e.E[ww]=}
    \E \Ll[w^q(\brho)w^q(\brho')^\intercal\Rr] = q\Ll(\brho\wedge\brho'\Rr).
\end{align}
Here, we recall that $q\in \mcl Q_\infty$ is a function defined on $[0,1)$. Since $q$ is bounded and increasing, we have that the function $s\mapsto a\cdot q(s)$ for every $a\in\S^\D_+$ is bounded and increasing. Hence, $\lim_{s\to1} a\cdot q(s)$ exists for every $a\in\S^\D_+$, then we can define $q(1)\in\S^\D_+$ to be the unique matrix determined by $\lim_{s\to1} a\cdot q(s) = a\cdot q(1)$. In this way, we can extend the definition of $q$ to $[0,1]$ with the right endpoint satisfying
\begin{align}\label{e.q(1)=}
    q(1) = \lim_{s\to1}q(s).
\end{align}
Then,~\eqref{e.E[ww]=} makes sense when $\brho=\brho'$ or equivalently $\brho\wedge\brho'=1$.

Henceforth, whenever we take expectations with respect to $\mathfrak{R}$ and $\Ll(w^q(\brho)\Rr)_{\brho\in\supp\mathfrak{R}}$, we always first average the Gaussian randomness of $\Ll(w^q(\brho)\Rr)_{\brho\in\supp\mathfrak{R}}$ conditioned on $\mathfrak{R}$ and then average over the randomness of $\mathfrak{R}$. According to~\cite[Lemma~4.5]{chen2023free}, this order of averaging is needed to avoid measurability issues in all situations that we are interested in.

We often appeal to the following invariance property of the cascade. For a Lipschitz $\mathbf g:\R^\D\to \R$, we consider the random Gibbs measure
\begin{align*}
    \la \cdot \ra_{\mathfrak{R}^\mathbf{g}} = \frac{\exp \Ll(g(w^q(\brho)\Rr)\d \mathfrak{R}(\brho)}{\iint \exp \Ll(g(w^q(\brho)\Rr)\d \mathfrak{R}(\brho)}
\end{align*}
which averages over $\brho$. In the following, we often use $(\sigma,\brho)$ and $(\sigma',\brho')$ to denote two independent copies sampled from a random Gibbs measure.

\begin{lemma}[Invariance of cascades]\label{l.invar_cascade}
For every $q\in \mcl Q_\infty$, every Lipschitz $\mathbf{g}:\R^\D\to\R$, and every bounded measurable $\rho:\R\to \R$, we have
\begin{align*}
    \E \la \rho \Ll(\brho\wedge\brho'\Rr) \ra_{\mathfrak{R}^\mathbf{g}} = \int_0^1 \rho(s) \d s.
\end{align*}
\end{lemma}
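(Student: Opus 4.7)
The plan is to reduce to the case where $q$ is a right-continuous step path, for which the identity becomes the classical invariance of finite-level Ruelle probability cascades under Gaussian Gibbs tilts. When $\mathbf g\equiv 0$, the Gibbs measure $\la\cdot\ra_{\mathfrak{R}^\mathbf{g}}$ is just $\mathfrak R$ and the claim reduces to the uniformity of $\brho\wedge\brho'$ under $\P\mathfrak R^{\otimes 2}$ recalled at the start of the section. The content of the lemma is that this uniformity is preserved under Lipschitz tilts.

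First, I would approximate $q\in\mcl Q_\infty$ by a sequence of right-continuous step paths $q_n:[0,1)\to S^D_+$, piecewise constant on partitions $0=s_0<s_1<\cdots<s_{K_n}=1$, with $q_n\to q$ in $L^1([0,1],S^D)$. For each $q_n$, the covariance $q_n(\brho\wedge\brho')$ in~\eqref{e.E[ww]=} depends on $\brho,\brho'$ only through the level $\ell$ with $\brho\wedge\brho'\in[s_{\ell-1},s_\ell)$. Using the ultrametric property (iii) of $\supp\mathfrak R$, one can then realize $w^{q_n}$, conditionally on $\mathfrak R$, as a sum $\sum_{\ell=1}^{K_n} Z_\ell(\brho)$ of independent centered Gaussian increments indexed by the levels of the cascade tree, where each $Z_\ell$ is constant on the equivalence classes of the relation $\brho\wedge\brho'\geq s_\ell$ and has covariance $q_n(s_\ell)-q_n(s_{\ell-1})$. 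In this finite-level setup the identity is the classical Ruelle cascade invariance under Gaussian Gibbs tilts, which can be derived by a direct computation based on the Poisson point process description of the weights of $\mathfrak R$ at each level and is a standard ingredient of the cavity method (e.g.\ as presented in \cite{HJbook}).

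To conclude, I would pass to the limit $n\to\infty$. Under a coherent coupling of the $w^{q_n}$ and $w^q$ provided by the construction underlying \cite[Proposition~4.1]{chen2023free}, the $L^1$-convergence $q_n\to q$ yields $\E\bigl[|w^{q_n}(\brho)-w^q(\brho)|^2\bigr]\to 0$ uniformly in $\brho\in\supp\mathfrak R$. The Lipschitz bound $|\mathbf g(x)-\mathbf g(y)|\le\mathrm{Lip}(\mathbf g)\,|x-y|$ then gives convergence of the Gibbs weights in $L^2$, while the growth bound $|\mathbf g(x)|\le|\mathbf g(0)|+\mathrm{Lip}(\mathbf g)\,|x|$ together with Gaussian moment estimates on $w^q$ supplies uniform integrability. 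A dominated convergence argument then transfers the identity from $q_n$ to $q$, using the boundedness of $\rho$ in the numerator and the lower bound on the Gibbs normalization.

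The main obstacle is in the finite-cascade step: one must realize the restriction of the continuous cascade $\mathfrak R$ to each level partition as a genuine discrete Ruelle cascade in a way that is compatible with the averaging-order convention noted just before the lemma (first the Gaussian variables conditional on $\mathfrak R$, then $\mathfrak R$), so that the classical invariance can be invoked pointwise in $\mathfrak R$ without running into measurability issues. Once this matching is set up correctly, the rest of the argument is a fairly mechanical approximation.
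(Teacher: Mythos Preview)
Your route and the paper's are quite different. The paper's proof is two sentences: it invokes \cite[Proposition~4.8]{chen2023free}, which already asserts that $\E\la\rho(\brho\wedge\brho')\ra_{\mathfrak R^{\mathbf g}}$ does not depend on the tilt $\mathbf g$; the case $\mathbf g\equiv0$ is then immediate from the uniformity of $\brho\wedge\brho'$ under $\P\mathfrak R^{\otimes 2}$. What you propose is effectively to reprove that proposition from scratch by approximating $q$ by step paths, invoking the discrete Ruelle cascade invariance at each level, and passing to the limit. This is the honest ``bare hands'' approach and is morally how such invariance statements are established in the first place, so it is not wrong in spirit; it is simply much longer, and you correctly flag that the delicate part is matching the continuous cascade $\mathfrak R$ to a discrete Ruelle cascade at each finite partition in a way compatible with the averaging convention.

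One technical point in your limit step deserves more care. You claim that $L^1$-convergence $q_n\to q$ yields $\E\bigl[|w^{q_n}(\brho)-w^q(\brho)|^2\bigr]\to 0$ uniformly in $\brho$. But $\E\bigl[|w^q(\brho)|^2\bigr]=\tr q(1)$ by~\eqref{e.E[ww]=}, so the marginal law of $w^q(\brho)$ depends on the endpoint $q(1)$, which is \emph{not} controlled by the $L^1$ norm of $q$ (e.g.\ $q_n=q+c\,\one_{[1-1/n,1)}$ converges to $q$ in $L^1$ while $q_n(1)=q(1)+c$). Under any reasonable coupling you therefore need $q_n(1)\to q(1)$ in addition to $L^1$ convergence, which is easy to arrange for a step approximation but should be stated explicitly. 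Once that is done, the Lipschitz bound on $\mathbf g$ and Gaussian moment estimates push the argument through as you outline.
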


\begin{proof}
According to~\cite[Proposition~4.8]{chen2023free}, the left-hand side is unchanged if we replace $\mathbf{g}$ by the constantly zero function. Then, the identity follows from the fact that $\brho\wedge\brho'$ is uniformly distributed over $[0,1]$ under $\E \la \cdot \ra_{\mathfrak{R}^0}$.
\end{proof}

We need the following standard interpolation computation.

\begin{lemma}\label{l.interpolation}
Let $\mu$ be a finite nonzero Borel measure supported on the unit ball of $\R^\D$. For every $q\in \mcl Q_\infty$, $x\in\R^\D$, and $z\in\R^{D\times D}$, we set
\begin{gather}\label{e.f(q),<>_q=}
\begin{split}
    \mathbf{f}_\mu(q,x,z) &= \E \log \iint \exp \Ll(\sqrt{2}\sigma\cdot w^q(\brho)+\sigma\cdot x + \sigma\sigma^\intercal\cdot(z-q(1))\Rr)\d \mu(\sigma) \d \mathfrak{R}(\brho),
    \\
    \la \cdot \ra_{\mu,q,x,z} &= \frac{\exp \Ll(\sqrt{2}\sigma\cdot w^q(\brho)+\sigma\cdot x+ \sigma\sigma^\intercal\cdot(z-q(1))\Rr)\d \mu(\sigma) \d \mathfrak{R}(\brho)}{\iint \exp \Ll(\sqrt{2}\sigma\cdot w^q(\brho)+\sigma\cdot x+ \sigma\sigma^\intercal\cdot(z-q(1))\Rr)\d \mu(\sigma) \d \mathfrak{R}(\brho)}.
\end{split}
\end{gather}
Then, for every $q,q'\in\mcl Q_\infty$, every $x,x'\in \R^\D$, and every $z,z'\in\R^{\D\times\D}$, writing $q_\lambda = \lambda q+(1-\lambda)q'$, $x_\lambda =\lambda x+(1-\lambda)x'$, and $z_\lambda =\lambda z+(1-\lambda)z'$, we have
\begin{align}\label{e.f(q)-f(q')=int}
    \mathbf{f}_\mu(q,x,z) - \mathbf{f}_\mu(q',x',z')= \int_0^1\E \la R_{1,1}-R_{1,2}  + \sigma^1\cdot (x-x')\ra_{\mu,q_\lambda,x_\lambda,z_\lambda} \d \lambda;
\end{align}
and, for any bounded measurable function $\mathbf{F}:\Ll(\R^\D\Rr)^n \times \R^{n\times n}\to \R$ with $n\in\N$, there are constants $(c_{l,l'})_{1\leq l,l'\leq n+1}$ and $(c_l)_{1\leq l\leq n+1}$ depending only on $n$ such that
\begin{align}\label{e.E<F>_q-E<F>_q'}
\begin{split}
     &\E\la \mathbf{F}(\cdots) \ra_{\mu,q,x,z} - \E\la \mathbf{F}(\cdots)\ra_{\mu,q',x',z'}
    \\
    &= \int_0^1\E \la \mathbf{F}(\cdots) \Ll(\sum_{l,l'=1}^{n+1} c_{l,l'}R_{l,l'} + \sum_{l=1}^{n+1}c_l \sigma^l\cdot (x-x')\Rr)\ra_{\mu,q_\lambda,x_\lambda,z_\lambda} \d \lambda
\end{split}
\end{align}
where $\mathbf{F}(\cdots) = \mathbf{F}\Ll(\sigma^1,\dots ,\sigma^n;\; \Ll(\brho^l\wedge\brho^{l'}\Rr)_{1\leq l,l'\leq n}\Rr)$ and
\begin{align*}
    R_{l,l'}= 
    \begin{cases}
        \sigma^l\Ll(\sigma^{l}\Rr)^\intercal \cdot \Ll(z-z'\Rr),\quad &\text{if $l= l'$},
        \\\sigma^l\Ll(\sigma^{l'}\Rr)^\intercal \cdot \Ll(q-q'\Rr)\Ll(\brho^l\wedge\brho^{l'}\Rr),\quad &\text{if $l\neq l'$}.
    \end{cases}
\end{align*}
\end{lemma}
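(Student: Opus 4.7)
The plan is the classical Gaussian interpolation argument adapted to the continuous cascade. Conditionally on $\mathfrak{R}$, couple $w^q$ and $w^{q'}$ to be independent and set $W_\lambda(\brho) := \sqrt{\lambda}\,w^q(\brho) + \sqrt{1-\lambda}\,w^{q'}(\brho)$; by bilinearity of covariance, $W_\lambda$ has the same conditional law as $w^{q_\lambda}$. Define
\[
\phi(\lambda) := \E \log \iint \exp\bigl(\sqrt{2}\,\sigma\cdot W_\lambda(\brho) + \sigma \cdot x_\lambda + \sigma\sigma^\intercal\cdot(z_\lambda - q_\lambda(1))\bigr)\, \d\mu(\sigma)\, \d\mathfrak{R}(\brho),
\]
so that $\phi(\lambda) = \mathbf{f}_\mu(q_\lambda, x_\lambda, z_\lambda)$. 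Identity \eqref{e.f(q)-f(q')=int} will follow from the fundamental theorem of calculus once $\phi'(\lambda)$ is identified with the asserted integrand.

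Differentiating in $\lambda$ produces three contributions. The explicit $\lambda$-derivatives of $x_\lambda$ and $z_\lambda - q_\lambda(1)$ give $\sigma^1\cdot(x-x') + \sigma^1(\sigma^1)^\intercal\cdot\bigl((z-z') - (q-q')(1)\bigr)$ inside the Gibbs bracket. The Gaussian piece $\sqrt 2 \,\sigma\cdot W_\lambda(\brho)$ is handled by Gaussian integration by parts in $w^q$ and $w^{q'}$, carried out conditionally on $\mathfrak{R}$ (the standing convention recalled just before Lemma~\ref{l.invar_cascade}). Since the covariance of $\sqrt{2}\,\sigma\cdot W_\lambda(\brho)$ with $\sqrt{2}\,\sigma'\cdot W_\lambda(\brho')$ equals $2\sigma(\sigma')^\intercal \cdot q_\lambda(\brho\wedge\brho')$, Gaussian IBP yields a diagonal (self-replica) term $\sigma^1(\sigma^1)^\intercal\cdot(q-q')(1)$ (using \eqref{e.q(1)=} at $\brho^1\wedge\brho^1 = 1$) and an off-diagonal replica term $-\sigma^1(\sigma^2)^\intercal\cdot(q-q')(\brho^1\wedge\brho^2)$. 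The diagonal contribution cancels exactly the $(q-q')(1)$ piece from the explicit derivative, leaving $R_{1,1} - R_{1,2} + \sigma^1\cdot(x-x')$ inside the Gibbs bracket; integrating in $\lambda$ gives \eqref{e.f(q)-f(q')=int}.

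Identity \eqref{e.E<F>_q-E<F>_q'} is proved by running the same interpolation on $\lambda\mapsto\E\la\mathbf{F}(\cdots)\ra_{\mu,q_\lambda,x_\lambda,z_\lambda}$. Writing the Gibbs bracket as a ratio and differentiating, each Gaussian IBP step introduces one extra replica beyond the $n$ already present as arguments of $\mathbf F$; this is why the sums in \eqref{e.E<F>_q-E<F>_q'} run up to $n+1$. The coefficients $c_{l,l'}$ and $c_l$ arise purely from the combinatorics of the numerator-minus-denominator contributions in the derivative of a ratio and from counting how often each replica is produced by IBP, so they depend only on $n$. The same diagonal-versus-off-diagonal split, together with the cancellation of the $(q-q')(1)$ contributions against the explicit derivative of $q_\lambda(1)$, converts diagonal entries into $R_{l,l} = \sigma^l(\sigma^l)^\intercal\cdot(z-z')$ as claimed.

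The only genuinely delicate point is that Gaussian IBP must be performed for a field indexed by the uncountable support of $\mathfrak R$. However, only finitely many points of $\supp\mathfrak R$ ever enter any single expectation in the computation above, so conditioning on $\mathfrak{R}$ reduces each step to the finite-dimensional Stein formula, which is precisely the approach systematically used in \cite{chen2023free}. Compactness of $\supp\mu$ inside the unit ball supplies the exponential moments needed to differentiate under the $\E\log$ sign and to interchange the order of integration throughout.
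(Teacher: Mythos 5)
Your proposal is correct and follows essentially the same route as the paper: the coupling $\sqrt{\lambda}\,w^q+\sqrt{1-\lambda}\,w^{q'}\stackrel{\d}{=}w^{q_\lambda}$ via the covariance formula \eqref{e.E[ww]=}, differentiation in $\lambda$, and Gaussian integration by parts conditionally on $\mathfrak{R}$, with the self-replica term cancelling the explicit derivative of $-q_\lambda(1)$; the second identity is likewise handled by the same interpolation with one extra replica. Nothing to add.
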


\begin{proof}
Taking $\lambda\in[0,1]$ and $w^q$ to be independent from $w^{q'}$, we can see that $\sqrt{\lambda} w^q + \sqrt{1-\lambda}w^{q'}$ has the same distribution as $w^{\lambda q + (1-\lambda)q'}$ by checking that they have the same covariance using~\eqref{e.E[ww]=}. Hence, we can rewrite
\begin{align*}
    \mathbf{f}_\mu\Ll(q_\lambda,x_\lambda,z_\lambda\Rr) 
    = \E \log \iint \exp \Big(\sqrt{2}\sigma \cdot \Ll(\sqrt{\lambda}w^{q}(\brho) + \sqrt{1-\lambda }w^{q'}(\brho)\Rr)  
    \\
    + \sigma\cdot x_\lambda + \sigma\sigma^\intercal\cdot(z_\lambda-q_\lambda(1))\Big) \d \mu(\sigma) \d\mathfrak{R}(\brho).
\end{align*}
Then, for $\lambda\in(0,1)$, we first compute the derivative and then use the Gaussian integration by parts (c.f.\ \cite[Theorem~4.6]{HJbook}) to get
\begin{align*}
	&\frac{\d}{\d\lambda} \mathbf{f}_\mu\Ll(q_\lambda,x_\lambda,z_\lambda\Rr)
    \\
	& = \E \la \sigma\cdot \Ll(\frac{1}{\sqrt{2\lambda}}w^{q}(\brho)-\frac{1}{\sqrt{2(1-\lambda)}}w^{q'}(\brho)+x-x'\Rr)  + \sigma\sigma^\intercal\cdot (z-z'-(q-q')(1))\ra_{\mu,q_\lambda,x_\lambda}
	\\
	& = \E \la \sigma\sigma^\intercal \cdot (z-z')   -   \sigma\sigma'^\intercal \cdot \Ll(q-q'\Rr)\Ll(\brho\wedge\brho'\Rr)  +\sigma\cdot(x-x')\ra_{\mu,q_\lambda,x_\lambda}
\end{align*}
which yields~\eqref{e.f(q)-f(q')=int}. For~\eqref{e.E<F>_q-E<F>_q'}, we can use the same interpolation arguments based on replacing $w^{\lambda q + (1-\lambda)q'}$ with $\sqrt{\lambda} w^q + \sqrt{1-\lambda}w^{q'}$.
This time, the expression is more complicated due to the presence of $n$ independent copies of $(\sigma,\brho)$ from the differentiation and $n+1$ copies after performing the Gaussian integration by parts. 
Since the procedure is standard, we omit the details here.
\end{proof}

\begin{corollary}\label{c.interpolation}
Under the same setting as in Lemma~\ref{l.interpolation}, we have
\begin{align*}
    \Ll|\mathbf{f}_\mu(q,x,z) - \mathbf{f}_\mu(q',x',z')\Rr|\leq \Ll|z -z'\Rr| + \Ll\|q-q'\Rr\|_{L^1} + |x-x'|,
\end{align*}
and, there is a constant $C_n$ depending only on $n$ such that
\begin{align*}
    \Ll|\la \mathbf{F}(\cdots) \ra_{\mu,q,x,z} - \la \mathbf{F}(\cdots)\ra_{\mu,q',x',z'}\Rr|\leq C_n \|\mathbf{F}\|_{L^\infty} \Ll(\Ll|z -z'\Rr| + \Ll\|q-q'\Rr\|_{L^1}+|x-x'|\Rr).
\end{align*}
\end{corollary}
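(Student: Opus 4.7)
The plan is to derive both inequalities directly from the interpolation identities in Lemma~\ref{l.interpolation} by bounding each integrand pointwise, using the hypothesis that $\mu$ is supported in the unit ball together with the cascade invariance of Lemma~\ref{l.invar_cascade}.

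For the first inequality, I will apply~\eqref{e.f(q)-f(q')=int}. Since $|\sigma|\le 1$ for $\mu$-almost every $\sigma$, Cauchy-Schwarz gives $|R_{1,1}| = |\sigma\sigma^\intercal \cdot (z-z')| \le |z-z'|$, $|\sigma \cdot (x-x')|\le |x-x'|$, and $|R_{1,2}| = |\sigma\sigma'^\intercal\cdot(q-q')(\brho\wedge\brho')| \le |(q-q')(\brho\wedge\brho')|$. It then suffices to show
$$\E\la |(q-q')(\brho\wedge\brho')|\ra_{\mu,q_\lambda,x_\lambda,z_\lambda}\; \le\; \|q-q'\|_{L^1}$$
uniformly in $\lambda \in [0,1]$. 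For this I will integrate out the spin variables first: the marginal Gibbs measure on $(\brho,\brho')$ has density proportional to $\exp\bigl(\mathbf{g}(w^{q_\lambda}(\brho)) + \mathbf{g}(w^{q_\lambda}(\brho'))\bigr)$, where
$$\mathbf{g}(w) \;=\; \log\int \exp\!\bigl(\sqrt{2}\,\sigma\cdot w + \sigma\cdot x_\lambda + \sigma\sigma^\intercal\cdot (z_\lambda - q_\lambda(1))\bigr)\, \d\mu(\sigma)$$
is Lipschitz (its gradient equals $\sqrt{2}$ times a Gibbs average of $\sigma$, hence has norm at most $\sqrt{2}$ since $|\sigma| \le 1$). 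Consequently $\la |(q-q')(\brho\wedge\brho')|\ra_{\mu,q_\lambda,x_\lambda,z_\lambda}$ coincides with $\la |(q-q')(\brho\wedge\brho')|\ra_{\mathfrak{R}^{\mathbf{g}}}$, and Lemma~\ref{l.invar_cascade} applied to $\rho(s) = |(q-q')(s)|$ yields the desired $L^1$ bound. Integrating over $\lambda$ produces the first estimate.

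For the second inequality, I will run the same strategy on~\eqref{e.E<F>_q-E<F>_q'}: pull $\|\mathbf{F}\|_{L^\infty}$ out of the bracket, then estimate the remaining terms one by one. Diagonal terms $R_{l,l}$ are bounded by $|z-z'|$, the magnetic terms $\sigma^l\cdot(x-x')$ by $|x-x'|$, and off-diagonal $R_{l,l'}$ with $l\neq l'$ by $|(q-q')(\brho^l\wedge\brho^{l'})|$. The cascade invariance applies to any pair of replicas (the joint law of $(\brho^l,\brho^{l'})$ is exchangeable in the replica indices, so the argument of Lemma~\ref{l.invar_cascade} transfers verbatim), and hence each off-diagonal term contributes at most $\|q-q'\|_{L^1}$. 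Summing with the constants $c_{l,l'}$ and $c_l$ from Lemma~\ref{l.interpolation}, which depend only on $n$, gives the required constant $C_n$.

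The only conceptual step is the use of cascade invariance after marginalizing out $\sigma$, which relies on the resulting effective potential being Lipschitz; all remaining estimates are elementary Cauchy-Schwarz bounds exploiting that the support of $\mu$ lies in the unit ball. I do not expect any genuine obstacle.
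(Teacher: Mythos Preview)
Your proposal is correct and follows essentially the same approach as the paper: bound $|R_{l,l'}|$ using that $\supp\mu$ lies in the unit ball, and then invoke Lemma~\ref{l.invar_cascade} to convert the cascade average of $|(q-q')(\brho^l\wedge\brho^{l'})|$ into $\|q-q'\|_{L^1}$. Your explicit marginalization over $\sigma$ to produce a Lipschitz $\mathbf{g}$ is exactly the step the paper leaves implicit when it writes ``using this and Lemma~\ref{l.invar_cascade}''.
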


\begin{proof}
Due to the assumption on the support of $\mu$, we have $\Ll|R_{l,l'}\Rr|\leq \Ll|q-q'\Rr|\Ll(\brho^l\wedge\brho^{l'}\Rr)$ for $l\neq l'$ and $\Ll|R_{l,l}\Rr|\leq \Ll|z-z'\Rr|$. Using this and Lemma~\ref{l.invar_cascade} for $\brho^l\wedge\brho^{l'}$ with $l\neq l'$, we obtain the desired results from~\eqref{e.f(q)-f(q')=int} and~\eqref{e.E<F>_q-E<F>_q'}.
\end{proof}

\subsection{Enriched free energy}
\label{ss.def.free.energy}

Lastly, we define the enriched free energy. For almost every realization of $\mathfrak{R}$, let $(w^q_i)_{i\in\N}$ be independent copies of $w^q$ and define, for every $N\in\N$ and $\brho \in \supp\mathfrak{R}$,
\begin{align*}
    W^q_N(\brho) = \Ll(w^q_1(\brho),\dots ,w^q_N(\brho)\Rr)
\end{align*}
which takes value in $\R^{\D\times N}$ (namely, column vectors of $W^q_N(\brho)$ are given by $w^q_i(\brho)$).
For every $N\in\N$ and $(t,q)\in \R_+\times \mcl Q_\infty$,
we consider the Hamiltonian
\begin{align*}
    H^{t,q}_N(\sigma,\brho) = \sqrt{2t}H_N(\sigma)- t N\xi\Ll(\frac{\sigma\sigma^\intercal}{N}\Rr)+\sqrt{2} W^q_N(\brho)\cdot \sigma - q(1)\cdot \sigma\sigma^\intercal
\end{align*}
and the free energy
\begin{align*}\bar F_N(t,q) = - \frac{1}{N}\E \log \iint \exp\Ll(H^{t,q}_N(\sigma,\brho)\Rr)\d P_N(\sigma)\d \mathfrak{R}(\brho),
\end{align*}
where $\E$ first averages over the Gaussian randomness in $H^{t,q}_N(\sigma,\brho)$ and then the randomness of $\mathfrak{R}$.
The associated random Gibbs measure (averaging over $(\sigma,\brho)$) is defined by
\begin{align}\label{e.<>_N=}
    \la \cdot \ra_N = \frac{\exp\Ll(H^{t,q}_N(\sigma,\brho)\Rr)\d P_N(\sigma)\d \mathfrak{R}(\brho)}{\iint \exp\Ll(H^{t,q}_N(\sigma,\brho)\Rr)\d P_N(\sigma)\d \mathfrak{R}(\brho)}.
\end{align}

Viewing $\bar F_N$ as a function on $\R_+\times \mcl Q_\infty$, we can interpret $\bar F_N(0,\cdot)$ as its initial condition. Due to the assumption $P_N=P_1^{\otimes N}$, we have $\bar F_N(0,\cdot) = \bar F_1(\cdot)$ (\cite[Proposition~3.2]{chen2023free}). Then, $\psi$ given as in~\eqref{e.def.psi} is equal to $\bar F_1(0,\cdot)$ and has the explicit expression:
\begin{align}\label{e.psi=}
    \psi (q) =  - \E \log \iint \exp\Ll(\sqrt{2}\sigma\cdot w^{q}(\brho) - q(1)\cdot \sigma\sigma^\intercal\Rr) \d P_1(\sigma) \d \mathfrak{R}(\brho),\quad\forall q\in \mcl Q_\infty.
\end{align}
It is known (\cite[Definition~2.1 and Corollary~5.2]{chen2023free}) that $\psi$ is Fr\'echet differentiable at every $q\in\mcl Q_\infty$ in the following sense. For every $q\in \mcl Q_\infty$, there is a unique $p\in \mcl Q_\infty$ such that
\begin{align}\label{e.psi_diff}
    \lim_{r\to0}\sup_{\substack{q' \in \mcl Q_\infty \setminus\{0\}\\\|q'-q\|_{L^2} \leq r}} \frac{\Ll|\psi(q')-\psi(q)-\la p, q'-q\ra_{L^2}\Rr|}{\|q'-q\|_{L^2}} =0,
\end{align}
where $\la\cdot,\cdot\ra_{L^2}$ is the $L^2$ scalar product for $\S^\D$-valued functions defined on $[0,1]$. We denote this unique $p$ by $\partial_q \psi(q) \in \mcl Q_\infty$.

\begin{remark}\label{r.left-cts}
In this section, objects are defined with $q\in \mcl Q_\infty$ which is a right-continuous path with left limits. Later, we will introduce the left-continuous version $\aq$ of $q$ defined in~\eqref{e.aq=}, which is more natural for other purposes. Here, we comment that all the objects can be defined in terms of $\aq$ instead of $q$ because conditioned on $\mathfrak{R}$ we can construct the Gaussian process $\Ll(w^{\aq}(\brho)\Rr)_{\brho\in\supp\mathfrak{R}}$ with covariance
\begin{align*}
    \E \Ll[w^{\aq}(\brho)w^{\aq}(\brho')^\intercal\Rr] = {\aq}\Ll(\brho\wedge\brho'\Rr),\quad \forall \brho,\brho'\in\supp\mathfrak{R}.
\end{align*} 
Relevant properties are also preserved, as explained in~\cite[Remark~4.9]{chen2023free}. In particular, the invariance property in Lemma~\ref{l.invar_cascade} still holds for this version. Since $q$ and $\aq$ differ on a set with zero Lebesgue measure, using interpolation arguments as in Lemma~\ref{l.interpolation} and the invariance property, we can see that $\mathbf{f}_\mu(q,x,z)$ and the deterministic measure $\E \la\cdot\ra_{q,x,z}$ are preserved if we change $w^q$ in~\eqref{e.f(q),<>_q=} to $w^{\aq}$.
\qed
\end{remark}

\section{Decomposition of matrix-valued paths}
\label{s.decomp}

A path $q\in\mcl Q_\infty$ is matrix-valued. To facilitate the definition of the Parisi PDE along the path $q$, we need to perform a decomposition of $q$ into a Lipschitz matrix-valued path and a (scalar-valued, possibly discontinuous) quantile function. We explain this procedure in this section and derive necessary properties for later use.

\subsection{Quantile functions}

On an interval $[0,\T]$ for some $\T>0$, the function $\alpha:[0,\T]\to [0,1]$ is said to be a \textbf{probability distribution function} (p.d.f.)\ if $\alpha$ is increasing (by this we mean that $\alpha(t)\leq \alpha(t')$ whenever $t \leq t'$), is right-continuous with left limits, and satisfies $\alpha(\T)=1$. Although not needed, we can extend $\alpha$ to $\bar\alpha$ defined on the entire real line by setting $\bar\alpha(t)=0$ for $t<0$ and $\bar\alpha(t)=1$ for $t>\T$. We denote the associated probability measure by $\d\alpha$; this probability measure is uniquely determined by the property that $\int_{(-\infty,t]}\d \alpha = \bar\alpha(t)$ for every $t\in\R$.

The \textbf{quantile function} $\alpha^{-1}:[0,1]\to[0,\T]$ associated with $\alpha$ is given by
\begin{align}\label{e.alpha^-1=}
    \alpha^{-1}(s) = \inf\Ll\{t\in[0,\T]:\: s\leq \alpha(t)\Rr\},\quad\forall s\in [0,1].
\end{align}
It is straightforward to see that $\alpha^{-1}$ is increasing and left-continuous with right limits, and satisfies $\alpha^{-1}(0)=0$.

The definition of $\alpha^{-1}$ and the right-continuity of $\alpha$ imply
\begin{align}
    \alpha^{-1}\circ\alpha(t)\leq t,\quad\forall t\in [0,\T]; \label{e.alpha^-1alpha(t)<t}
    \\
    s\leq\alpha\circ\alpha^{-1}(s),\quad\forall s\in[0,1]. \label{e.s<alphaalpha^-1(s)}
\end{align}

\begin{lemma}\label{l.alpha(t)=sup}
Let $\alpha$ be a p.d.f.\ on $[0,\T]$ and let $\alpha^{-1}$ be its associated quantile function. We have
\begin{align*}
    \alpha(t) = \sup\Ll\{s\in[0,1]:\: t\geq \alpha^{-1}(s)\Rr\},\quad\forall t\in[0,\T].
\end{align*}
\end{lemma}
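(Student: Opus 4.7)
The plan is to prove the two inequalities $\alpha(t) \leq \sup\{s : t \geq \alpha^{-1}(s)\}$ and $\sup\{s : t \geq \alpha^{-1}(s)\} \leq \alpha(t)$ separately, with each direction using one of the two relations \eqref{e.alpha^-1alpha(t)<t} and \eqref{e.s<alphaalpha^-1(s)} together with monotonicity.

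For the $\leq$ direction, I would observe that the value $s = \alpha(t)$ lies in the set $\{s \in [0,1] : t \geq \alpha^{-1}(s)\}$ itself, because taking $s = \alpha(t)$ in \eqref{e.alpha^-1alpha(t)<t} gives $\alpha^{-1}(\alpha(t)) \leq t$. Hence $\alpha(t)$ is a particular element being supremized over, so it is bounded by the supremum.

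For the reverse $\geq$ direction, I would take an arbitrary $s \in [0,1]$ with $t \geq \alpha^{-1}(s)$ and show $s \leq \alpha(t)$. Applying the monotone function $\alpha$ to both sides of $\alpha^{-1}(s) \leq t$ gives $\alpha(\alpha^{-1}(s)) \leq \alpha(t)$. Combining with \eqref{e.s<alphaalpha^-1(s)} yields $s \leq \alpha(\alpha^{-1}(s)) \leq \alpha(t)$. Taking the supremum over all such $s$ concludes the argument.

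There is no serious obstacle here; the lemma is a standard Galois-type duality between a right-continuous distribution function and its left-continuous quantile, and the argument is essentially a two-line application of the two elementary facts already recorded in the excerpt, together with the monotonicity of $\alpha$. The only minor subtlety is being careful that the supremum is attained (which the first direction verifies), so the statement of equality is legitimate and not merely an inequality.
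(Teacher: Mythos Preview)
Your proof is correct and follows essentially the same strategy as the paper's: both establish the two inequalities separately using \eqref{e.alpha^-1alpha(t)<t}, \eqref{e.s<alphaalpha^-1(s)}, and monotonicity. Your version is in fact slightly more direct, since by exhibiting $\alpha(t)$ as a member of the set you avoid the paper's appeal to the left-continuity of $\alpha^{-1}$.
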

\begin{proof}
We set $\alpha'(t)= \sup\Ll\{s\in[0,1]:\: t\geq \alpha^{-1}(s)\Rr\}$ for $t\in[0,\T]$ and we show $\alpha=\alpha'$. 
Since $\alpha^{-1}$ is left-continuous, we get $\alpha^{-1}\circ\alpha'(t)\leq t$. This along with~\eqref{e.s<alphaalpha^-1(s)} gives $\alpha'(t)\leq \alpha\circ\alpha^{-1}\circ\alpha'(t)\leq \alpha(t)$.
The definition of $\alpha'$ gives $s\leq \alpha'\circ\alpha^{-1}(s)$ for every $s$, which along with~\eqref{e.alpha^-1alpha(t)<t} implies $\alpha(t)\leq \alpha'\circ\alpha^{-1}\circ\alpha(t)\leq \alpha'(t)$.
\end{proof}

For a function $\rho$ with right limits defined on an interval $[a,b]$, we write
\begin{align}\label{e.rho(r+)=}
    \rho(r+)=\lim_{r'\downarrow r} \rho(r'),\quad\forall r \in [a,b);\qquad \rho(b+)=\rho(b).
\end{align}

\begin{lemma}\label{l....=t}
Denoting $\alpha^{-1}([0,1]) = \Ll\{\alpha^{-1}(s):\:s\in[0,1]\Rr\}$, we have
\begin{gather*}
    \alpha^{-1}\circ\alpha(t)=t,\quad \forall t\in \alpha^{-1}([0,1]);\qquad \alpha^{-1}\circ\alpha(t+)=t,\quad \forall t\in\overline{\alpha^{-1}([0,1])}.
\end{gather*}
\end{lemma}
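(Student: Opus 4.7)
The first identity is immediate from the relations already recorded. Fix $t \in \alpha^{-1}([0,1])$ and write $t = \alpha^{-1}(s)$. Then \eqref{e.s<alphaalpha^-1(s)} gives $s \leq \alpha(\alpha^{-1}(s)) = \alpha(t)$, and applying the increasing map $\alpha^{-1}$ yields $t = \alpha^{-1}(s) \leq \alpha^{-1}(\alpha(t))$. Combined with \eqref{e.alpha^-1alpha(t)<t}, this forces $\alpha^{-1}\circ\alpha(t) = t$.

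For the second identity, interpret $\alpha^{-1}\circ\alpha(t+)$, following \eqref{e.rho(r+)=}, as the right limit of the composition at $t$. Because $\alpha$ and $\alpha^{-1}$ are both increasing, so is $\alpha^{-1}\circ\alpha$, so the right limit $L := \lim_{t'\downarrow t}\alpha^{-1}(\alpha(t'))$ exists. The upper bound $L\leq t$ follows at once from \eqref{e.alpha^-1alpha(t)<t}: for every $t' > t$ one has $\alpha^{-1}(\alpha(t')) \leq t'$, and $t' \downarrow t$ gives $L \leq t$.

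For the matching lower bound $L \geq t$, I split on whether $t$ actually belongs to the image. If $t \in \alpha^{-1}([0,1])$, monotonicity of $\alpha^{-1}\circ\alpha$ together with the first identity gives $\alpha^{-1}(\alpha(t')) \geq \alpha^{-1}(\alpha(t)) = t$ for every $t' \geq t$, so $L \geq t$. If instead $t \in \overline{\alpha^{-1}([0,1])} \setminus \alpha^{-1}([0,1])$, I claim such a $t$ must be approached \emph{from above} by the image: if one could only find $t_n = \alpha^{-1}(s_n)$ with $t_n \uparrow t$ and $t_n < t$, then monotonicity of $\alpha^{-1}$ forces $s_n \uparrow s^* := \sup_n s_n$, and left-continuity of $\alpha^{-1}$ (a direct consequence of its definition \eqref{e.alpha^-1=}) gives $\alpha^{-1}(s^*) = \lim_n \alpha^{-1}(s_n) = t$, contradicting $t \notin \alpha^{-1}([0,1])$. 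Hence there is a sequence $t_n \in \alpha^{-1}([0,1])$ with $t_n \downarrow t$ and $t_n > t$. The first identity then yields $\alpha^{-1}(\alpha(t_n)) = t_n$, and existence of $L$ lets us evaluate the right limit along this particular sequence to conclude $L = \lim_n t_n = t$.

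The only genuinely delicate step is the closure argument: a limit-point $t$ of the image need not itself lie in the image, and a sequence in the image approaching $t$ only from below would be useless for controlling the \emph{right} limit. The structural input that saves the day is the left-continuity of $\alpha^{-1}$ built into \eqref{e.alpha^-1=}, which rules this pathology out and guarantees that every new point of the closure is accessed from above by the image.
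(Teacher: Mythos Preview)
Your proof is correct and follows essentially the same approach as the paper: the first identity via \eqref{e.alpha^-1alpha(t)<t} and \eqref{e.s<alphaalpha^-1(s)}, and the second via the sandwich $t = \alpha^{-1}\circ\alpha(t) \le \alpha^{-1}\circ\alpha(t') \le t'$ when $t$ lies in the image, together with the left-continuity argument ruling out approach only from below when $t$ lies in the closure but not the image. The paper's proof differs only in minor presentation (it handles $t=T$ separately via the convention in~\eqref{e.rho(r+)=}, which you implicitly absorb).
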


We emphasize that $\alpha^{-1}([0,1])$ does \textit{not} denote the preimage of $[0,1]$ under $\alpha$ (the latter is the full interval $[0,T]$).
We also clarify that here and henceforth $\alpha^{-1}\circ\alpha(t+) =\lim_{t'\downarrow t}\alpha^{-1}\circ\alpha(t')$, which is in general \textit{not} equal to $\alpha^{-1}\Ll(\lim_{t' \downarrow t} \alpha(t')\Rr) = \alpha^{-1}\Ll(\alpha(t)\Rr)$ since $\alpha^{-1}\circ\alpha$ may not be right-continuous.

\begin{proof}
The first identity follows easily from~\eqref{e.alpha^-1alpha(t)<t} and~\eqref{e.s<alphaalpha^-1(s)} and the monotonicity of $\alpha$. 
We focus on the second identity. 

First, assume $t\in \alpha^{-1}([0,1])$. 
If $t=\T$, then the identity holds due to the first identity and the convention $\alpha(\T+)=\alpha(\T)$ in~\eqref{e.rho(r+)=}. Now, assume $t<\T$ and let $(t_n)_{n\in\N}$ be a decreasing sequence converging to $t$. Then, using the first identity and the first relation in~\eqref{e.alpha^-1alpha(t)<t}, we get $t= \alpha^{-1}\circ\alpha(t)\leq \alpha^{-1}\circ\alpha(t_n)\leq t_n$. Sending $n\to\infty$, we get the desired identity at $t$. 

Next, assume $t\in \overline{\alpha^{-1}([0,1])}\setminus \alpha^{-1}([0,1])$. We claim that there is a decreasing $(t_n)_{n\in\N}$ in $\alpha^{-1}([0,1])$ converging to $t$ with $t_n> t$. Suppose that this is not true. Then, there must be an increasing $(t'_n)_{n\in\N}$ in $\alpha^{-1}([0,1])$ converging to $t$. For each $t'_n$, let $s_n$ satisfy $t'_n=\alpha^{-1}(s_n)$. Then, $(s_n)_{n\in\N}$ is increasing. Denote by $s$ its limit. The left-continuity of $\alpha^{-1}$ implies $t=\alpha^{-1}(s)$ contradicting $t\not\in \alpha^{-1}([0,1])$. Hence, the claim holds and let $(t_n)_{n\in\N}$ be the sequence described. Since $(t_n)_{n\in\N}$ approaches $t$ from the right and $\alpha^{-1}\circ\alpha(t_n)=t_n$ due to the first identity, we get the second identity at $t$ by passing to the limit.
\end{proof}

Next, we recall relations between the quantile function and the probability measure it represents. We denote by $\supp \d\alpha$ the support of $\d\alpha$, defined to be the smallest closed set on which $\alpha$ has full measure. 

\begin{lemma}\label{l.law_dalpha}
The law of $\alpha^{-1}(U)$ is $\d\alpha$, where $U$ is a uniform random variable over $[0,1]$. Consequently, for every bounded measurable $h:[0,\T]\to \R$, we have that $\int_0^1 h\circ\alpha^{-1}(s)\d s = \int_0^\T h(t)\d\alpha(t)$. Moreover, $\supp \d\alpha = \overline{\alpha^{-1}((0,1])}$ and
\begin{align}
    \alpha^{-1}\circ\alpha(t+) = t,\quad \forall t \in \{0\}\cup \supp\d \alpha.\label{e.alpha^-1alpha(t)=t}
\end{align}
\end{lemma}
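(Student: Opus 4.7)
My plan is to prove the four statements in sequence, using the standard inverse-transform identity as the backbone. I will first establish the key equivalence
\[
\alpha^{-1}(s) \leq t \iff s \leq \alpha(t), \qquad s \in [0,1],\ t \in [0,\T].
\]
The ``$\Leftarrow$'' direction is immediate from the infimum in \eqref{e.alpha^-1=}. The ``$\Rightarrow$'' direction combines monotonicity of $\alpha$ with the already-established $s \leq \alpha\circ\alpha^{-1}(s)$ from \eqref{e.s<alphaalpha^-1(s)}: if $\alpha^{-1}(s)\leq t$ then $s\leq \alpha\circ\alpha^{-1}(s)\leq \alpha(t)$. Substituting a uniform random variable $U$ on $[0,1]$ yields $\P(\alpha^{-1}(U) \leq t) = \P(U \leq \alpha(t)) = \alpha(t)$, so the law of $\alpha^{-1}(U)$ has distribution function $\alpha$ and hence coincides with $\d\alpha$. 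The integral identity then follows by applying the law-of-the-unconscious-statistician to $\E[h(\alpha^{-1}(U))]$.

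Next, I will establish $\supp\d\alpha = \overline{\alpha^{-1}((0,1])}$ in two directions. For ``$\subset$'': since $\P(U > 0) = 1$, we have $\alpha^{-1}(U) \in \alpha^{-1}((0,1])$ almost surely, so $\d\alpha$ assigns full mass to $\overline{\alpha^{-1}((0,1])}$, which therefore contains the support. For ``$\supset$'': take $t = \alpha^{-1}(s)$ with $s \in (0,1]$. For any $t' < t$ the infimum definition forces $\alpha(t') < s$ (otherwise $\alpha^{-1}(s) \leq t'$), while $\alpha(t) \geq s$ by \eqref{e.s<alphaalpha^-1(s)}. Hence for sufficiently small $\epsilon > 0$,
\[
\d\alpha\bigl((t-\epsilon,\,t]\bigr) = \alpha(t) - \alpha(t-\epsilon) \geq s - \alpha(t-\epsilon) > 0,
\]
so every open neighborhood of $t$ has positive $\d\alpha$-measure (the edge case $t = 0$ is handled separately using $\d\alpha(\{0\}) = \alpha(0) \geq s > 0$). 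This places $t$ in the support, and since the support is closed, the inclusion $\overline{\alpha^{-1}((0,1])} \subset \supp\d\alpha$ follows.

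Finally, for $\alpha^{-1}\circ\alpha(t+) = t$ on $\{0\}\cup\supp\d\alpha$, I will combine the support characterization just proved with Lemma~\ref{l....=t}. Indeed, $\supp\d\alpha = \overline{\alpha^{-1}((0,1])} \subset \overline{\alpha^{-1}([0,1])}$, so the second identity of Lemma~\ref{l....=t} covers every $t \in \supp\d\alpha$ directly. For the separate case $t = 0$, \eqref{e.alpha^-1alpha(t)<t} gives $0 \leq \alpha^{-1}\circ\alpha(t') \leq t'$ for all $t' > 0$, and sending $t' \downarrow 0$ forces $\alpha^{-1}\circ\alpha(0+) = 0$. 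No step is genuinely hard---the inverse-transform identity does most of the work---and the main care needed is in the neighborhood argument for the support, where the interval $(t-\epsilon,t]$ must be handled slightly differently when $t=0$, and where the reverse inclusion relies crucially on both strict inequality $\alpha(t-\epsilon) < s$ (from the infimum) and the weak inequality $s \leq \alpha(t)$ (from \eqref{e.s<alphaalpha^-1(s)}).
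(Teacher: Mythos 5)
Your proof is correct, and it follows the same overall decomposition as the paper: establish the inverse-transform identity for the law of $\alpha^{-1}(U)$, deduce the change-of-variables formula, identify $\supp\d\alpha$ by two inclusions, and then read off \eqref{e.alpha^-1alpha(t)=t} from Lemma~\ref{l....=t}. The one place where you genuinely diverge is the inclusion $\overline{\alpha^{-1}((0,1])}\subset\supp\d\alpha$. The paper works directly with its definition of the support as the smallest closed full-measure set: it takes an arbitrary closed $K$ of full measure, and uses the left-continuity of $\alpha^{-1}$ to produce an interval $(s',s)$ of positive length mapped into $K^\comple$, a contradiction. You instead show that every neighborhood of $t=\alpha^{-1}(s)$, $s\in(0,1]$, carries positive mass, via the strict inequality $\alpha(t-\eps)<s\le\alpha(t)$ coming from the infimum in \eqref{e.alpha^-1=} together with \eqref{e.s<alphaalpha^-1(s)}; this implies $t\in\supp\d\alpha$ immediately, since otherwise the open set $(\supp\d\alpha)^\comple$ would be a null neighborhood of $t$. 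Your route is slightly more elementary (it avoids invoking left-continuity of $\alpha^{-1}$ for this step and argues pointwise rather than against all closed full-measure sets), at the small cost of the separate edge case $t=0$, which you handle correctly via $\d\alpha(\{0\})=\alpha(0)\ge s>0$. Both arguments are complete; the remaining steps (the equivalence $\alpha^{-1}(s)\le t\iff s\le\alpha(t)$ in place of the paper's appeal to Lemma~\ref{l.alpha(t)=sup}, and the squeeze $0\le\alpha^{-1}\circ\alpha(t')\le t'$ for the case $t=0$ of \eqref{e.alpha^-1alpha(t)=t}) are only cosmetic variations.
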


Here again, $\alpha^{-1}((0,1]) = \Ll\{\alpha^{-1}(s):\:s\in(0,1]\Rr\}$ is \textit{not} the preimage of $(0,1]$ under $\alpha$. 

\begin{proof}[Proof of Lemma~\ref{l.law_dalpha}]
For $t\in[0,\T]$, we have $\P\Ll(\alpha^{-1}(U)\leq t\Rr) = \sup\Ll\{s\in[0,1]:\alpha^{-1}(s)\leq t\Rr\}$, which is exactly $\alpha(t)$ due to Lemma~\ref{l.alpha(t)=sup}. It remains to identify the support of $\d\alpha$. For brevity, we write $S= \alpha^{-1}((0,1])$. Since $\P\Ll(\alpha^{-1}(U)\in\bar S\Rr)\geq \P\Ll(U\in(0,1]\Rr)=1$, we have $\supp\d\alpha \subset \bar S$. To show the other direction, let $K$ be any closed set such that $\P\Ll(\alpha^{-1}(U)\in K\Rr) = 1$. We claim that $\bar S\subset K$. Suppose otherwise $\bar S\cap K^\comple\neq \emptyset $. Since $K^\comple$ is open, we must have $S\cap K^\comple\neq \emptyset$. Let $s\in(0,1]$ satisfy $\alpha^{-1}(s) \in S\cap K^\comple$. Since $\alpha^{-1}$ is left-continuous and $K^\comple$ is open, there is $s'\in(0,s)$ sufficiently close to $s$ such that $\alpha^{-1}(r)\in S\cap K^\comple$ for every $r\in(s',s)$. Then, we have $\P\Ll(\alpha^{-1}(U)\in K^\comple\Rr)\geq \P\Ll(U\in(s',s)\Rr)>0$, reaching a contradiction. Therefore, we have $\bar S\subset K$ for every closed $K$ with full measure and thus $\bar S\subset \supp\d\alpha$. Lastly, using the characterization of $\supp\d\alpha$ and the easy observation $0=\alpha^{-1}(0)$, we can get~\eqref{e.alpha^-1alpha(t)=t} from Lemma~\ref{l....=t}.
\end{proof}

For a function $\rho:I\to \R$ defined on some interval $I$, we say that $\rho$ is \textbf{strictly increasing at $s$} if $(s,+\infty)\cap I\neq \emptyset$ and $\rho(s')>\rho(s)$ for every $s'\in (s,+\infty)\cap I$; and that $\rho$ is \textbf{strictly increasing on $J$} for some $J\subset I$ if $\rho$ is strictly increasing at every $s\in J$. If $\rho$ is strictly increasing on $I$, then we simply say that $\rho$ is \textbf{strictly increasing} as usual.

\begin{lemma}\label{l.strict_incr=>supp}
Let $\alpha:[0,\T]\to [0,1]$ be right-continuous and increasing. If $\alpha^{-1}$ is strictly increasing at some $s\in [0,1)$, then there is $t\in\{0\}\cup\supp\d\alpha$ such that $\alpha(t)=s$ and $\alpha^{-1}(s)=t$.
\end{lemma}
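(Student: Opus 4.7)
The plan is to take $t := \alpha^{-1}(s)$, which makes the third claim $\alpha^{-1}(s) = t$ tautological. The content of the lemma then reduces to verifying that $\alpha(t) = s$ and that $t \in \{0\} \cup \supp \d\alpha$.

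For $\alpha(t) = s$, the inequality \eqref{e.s<alphaalpha^-1(s)} already provides $\alpha(t) \geq s$. To exclude $\alpha(t) > s$, I would argue by contradiction: pick any $s'' \in (s, \alpha(t))$, which necessarily lies in $(s, 1]$ since $\alpha(t) \le 1$. The defining formula \eqref{e.alpha^-1=} yields $\alpha^{-1}(s'') \leq t$, because $t$ is a point satisfying $s'' \leq \alpha(t)$. But strict increase of $\alpha^{-1}$ at $s$ forces $\alpha^{-1}(s'') > \alpha^{-1}(s) = t$, a contradiction. Hence $\alpha(t) = s$.

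For $t \in \{0\} \cup \supp \d\alpha$, I would split on the value of $s$. If $s > 0$, then directly $t \in \alpha^{-1}((0,1]) \subset \overline{\alpha^{-1}((0,1])} = \supp \d\alpha$ by the identification of the support in Lemma~\ref{l.law_dalpha}. If $s = 0$, then $t = \alpha^{-1}(0) = \inf\{r \in [0,\T] :\, 0 \leq \alpha(r)\} = 0$, placing $t$ in $\{0\}$.

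There is no real obstacle here: the strict-increase hypothesis is used only in the first step, where its role is precisely to upgrade the inequality $\alpha \circ \alpha^{-1}(s) \geq s$ to an equality. Everything else is bookkeeping with the definitions of $\alpha^{-1}$ in \eqref{e.alpha^-1=} and of $\supp \d\alpha$ as identified in Lemma~\ref{l.law_dalpha}.
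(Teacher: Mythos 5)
Your proof is correct, and it takes a somewhat different route from the paper's in both non-trivial steps. For $\alpha(t)=s$, the paper first establishes that the equation $\alpha(\cdot)=s$ has a solution by extracting a decreasing sequence $(t'_n)_{n\in\N}$ with $s\leq\alpha(t'_n)<s+n^{-1}$ (whose existence follows from the strict increase of $\alpha^{-1}$ at $s$) and passing to the limit using right-continuity, and only afterwards identifies $t=\inf\{r:\alpha(r)=s\}$ with $\alpha^{-1}(s)$. You instead start directly from $t=\alpha^{-1}(s)$ and upgrade \eqref{e.s<alphaalpha^-1(s)} to an equality via the contradiction between $\alpha^{-1}(s'')\leq t$ (valid since $s''\leq\alpha(t)$) and $\alpha^{-1}(s'')>\alpha^{-1}(s)=t$ for $s''\in(s,\alpha(t))$; the strict-increase hypothesis enters at the same place, but your version avoids the sequence construction. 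For the membership $t\in\{0\}\cup\supp\d\alpha$, the paper argues from the definition of the support (a neighbourhood of $t$ carrying no $\d\alpha$-mass would make $\alpha$ locally constant and contradict the minimality of $t$), whereas you read it off from the identity $\supp\d\alpha=\overline{\alpha^{-1}((0,1])}$ of Lemma~\ref{l.law_dalpha}, which is cleaner. The only caveat, shared with the paper's own proof (which invokes $\alpha(\T)=1$ to rule out $t=\T$), is that citing Lemma~\ref{l.law_dalpha} tacitly treats $\alpha$ as a p.d.f., slightly stronger than the hypotheses of the lemma as literally stated; since in every application $\alpha$ is indeed a p.d.f., this is harmless.
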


\begin{proof}
We first show that there exists $t\in[0,\T]$ satisfying $\alpha(t)=s$.
For $r\in\R$, set $I_r =\Ll\{t:\: r\leq \alpha(t)\Rr\}$. Then, we have $\alpha^{-1}(r)=\inf I_r$ and $I_{r'}\subset I_r$ if $r'\geq r$. Since $\alpha^{-1}$ is strictly increasing at $s$, we must have $I_s\setminus I_{\s'}\neq \emptyset$ for every $s'>s$. Hence, for every $n\in\N$, there is $t_n\in I_s\setminus I_{s+n^{-1}}$ such that $s\leq \alpha(t_n)< s+n^{-1}$. For each $n$, we set $t'_n = \min\{t_1,\dots,t_n\}$. Since $\alpha$ is increasing, we have $s\leq \alpha(t'_n)<s+n^{-1}$ and the sequence $(t'_n)_{n\in\N}$ is decreasing. Let $t$ be its limit. Since $\alpha$ is right-continuous, we send $n\to\infty$ to get $s\leq \alpha(t)\leq s$, which gives the desired $t$.

Next, we set $t=\inf\Ll\{r:\:\alpha(r)=s\Rr\}$.
Comparing with the definition of $\alpha^{-1}$ in~\eqref{e.alpha^-1=}, we have $\alpha^{-1}(s)=t$.
The first step ensures that the infimum is taken over a nonempty set. The right continuity of $\alpha$ implies $\alpha(t)=s$. 
It remains to show that if $t\neq 0$, then $t\in\supp\d\alpha$. Notice that $t<\T$ because otherwise we would have $s=\alpha(\T)=1$ which is not allowed. Hence, $t\in(0,\T)$ now.
We argue by contradiction and suppose $t\not\in\supp\d\alpha$. Since $\supp\d\alpha$ is closed, there is $\eps\in(0,t)\cap (0,\T-t)$ such that $(t-\eps,t+\eps)\not\subset \supp\d\alpha$ which means that $\alpha(t'')-\alpha(t')=\int \one_{(t',t'']}\d \alpha = 0$ for every $t-\eps<t'<t''<t+\eps$. In particular, we get $\alpha(t')=\alpha(t)=s$ for some $t'<t$, which contradicts the definition of $t$. Hence, we must have $t\in\supp\d\alpha$ which completes the proof.
\end{proof}

\begin{lemma}\label{l.exists_s_strict_inc}
If $\alpha^{-1}(u)<\alpha^{-1}(v)$, then there is $s\in [u,v)$ such that $\alpha^{-1}$ is strictly increasing at $s$.
\end{lemma}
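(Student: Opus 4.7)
The plan is to take $s^\ast$ to be the largest point in $[u,v]$ at which $\alpha^{-1}$ still equals $\alpha^{-1}(u)$, and show that $\alpha^{-1}$ is strictly increasing at $s^\ast$. Concretely, set
\[
A = \Ll\{s\in[u,v]:\: \alpha^{-1}(s) = \alpha^{-1}(u)\Rr\},\qquad s^\ast = \sup A.
\]
The set $A$ is nonempty (it contains $u$) and bounded above by $v$, so $s^\ast \in [u,v]$ is well-defined. The key technical input is the left-continuity of $\alpha^{-1}$ recorded after~\eqref{e.alpha^-1=}.

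First I would argue that $s^\ast \in A$: either $s^\ast = u$ and there is nothing to check, or there is a sequence $s_n\in A$ with $s_n\uparrow s^\ast$, and left-continuity gives $\alpha^{-1}(s^\ast) = \lim_n \alpha^{-1}(s_n) = \alpha^{-1}(u)$. Next I would show $s^\ast < v$. If instead $s^\ast = v$, the same kind of sequence $s_n \uparrow v$ inside $A$ would force $\alpha^{-1}(v)=\alpha^{-1}(u)$ by left-continuity, contradicting the hypothesis $\alpha^{-1}(u)<\alpha^{-1}(v)$. Therefore $s^\ast \in [u,v)$ and $\alpha^{-1}(s^\ast) = \alpha^{-1}(u)$.

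It remains to verify that $\alpha^{-1}(s')>\alpha^{-1}(s^\ast)$ for every $s' \in (s^\ast,1]$. Split into two cases. If $s^\ast < s' \le v$, then $s'\notin A$ by maximality of $s^\ast$, so $\alpha^{-1}(s')\neq \alpha^{-1}(u)=\alpha^{-1}(s^\ast)$, and by monotonicity of $\alpha^{-1}$ we conclude $\alpha^{-1}(s')>\alpha^{-1}(s^\ast)$. If $s'>v$, monotonicity of $\alpha^{-1}$ together with the hypothesis gives $\alpha^{-1}(s')\ge \alpha^{-1}(v)>\alpha^{-1}(u)=\alpha^{-1}(s^\ast)$. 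This finishes the proof with $s=s^\ast$. There is no real obstacle here: the only delicate points are the two appeals to left-continuity in the paragraph above, both of which are routine.
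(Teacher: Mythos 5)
Your proof is correct and follows essentially the same route as the paper's: both take the supremum of the set where $\alpha^{-1}$ equals $\alpha^{-1}(u)$, use left-continuity to show the supremum belongs to that set, and deduce strict increase there from maximality. The only cosmetic difference is that you restrict the set to $[u,v]$ (forcing the extra two-case check for $s'>v$) whereas the paper takes all $r\ge u$; both work.
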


\begin{proof}
We set $I = \Ll\{r\geq u:\:\alpha^{-1}(r) = \alpha^{-1}(u)\Rr\}$ and let $s= \sup I$. Clearly, $s\geq u$. Since $\alpha^{-1}$ is left-continuous, we have $\alpha^{-1}(s)=\alpha^{-1}(u)$. Suppose that $\alpha^{-1}$ is not strictly increasing at $s$, then there is $s'>s$ such that $\alpha^{-1}(s')= \alpha^{-1}(s)=\alpha^{-1}(u)$, contradicts the definition of $s$. Hence, $\alpha^{-1}$ is strictly increasing at $s$. Also, we must have $s<v$ because otherwise we have $\alpha^{-1}(u)< \alpha^{-1}(v)\leq \alpha^{-1}(s)= \alpha^{-1}(u)$, which is absurd. 
\end{proof}

\begin{lemma}\label{l.equival_jumps}
Let $s\in[0,1)$. The following are equivalent:
\begin{enumerate}
    \item it holds that $\alpha^{-1}(s+)>\alpha^{-1}(s)$;
    \item there are $t,t_\star\in \{0\}\cup\supp\d\alpha$ such that
    \begin{align}\label{e.t,t_star_prop}
        t_\star>t,\qquad \alpha(t)=s,\qquad t=\alpha^{-1}(s),\qquad t_\star =\alpha^{-1}(s+),\qquad (t,t_\star)\cap \supp\d \alpha=\emptyset.
    \end{align}
\end{enumerate}

\end{lemma}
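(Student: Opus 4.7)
The implication (2) $\Rightarrow$ (1) is immediate: the fourth and fifth equalities in \eqref{e.t,t_star_prop} give $\alpha^{-1}(s+) = t_\star > t = \alpha^{-1}(s)$. For the converse, the only natural candidates are $t := \alpha^{-1}(s)$ and $t_\star := \alpha^{-1}(s+)$, and with these choices the identities $\alpha^{-1}(s) = t$, $\alpha^{-1}(s+) = t_\star$, and $t_\star > t$ hold by construction and by hypothesis. It thus remains to verify: (a) $t \in \{0\} \cup \supp \d \alpha$ and $\alpha(t) = s$; (b) $t_\star \in \supp \d \alpha$; and (c) $(t, t_\star) \cap \supp \d \alpha = \emptyset$.

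For (a), the hypothesis $\alpha^{-1}(s+) > \alpha^{-1}(s)$ forces $\alpha^{-1}$ to be strictly increasing at $s$ in the sense defined just before Lemma~\ref{l.strict_incr=>supp}, because the monotonicity of $\alpha^{-1}$ yields $\alpha^{-1}(s') \geq \alpha^{-1}(s+) > \alpha^{-1}(s)$ for every $s' \in (s,1]$. Applying that lemma produces some $t' \in \{0\} \cup \supp \d \alpha$ with $\alpha(t') = s$ and $\alpha^{-1}(s) = t'$, and necessarily $t' = t$. For (b), recall from Lemma~\ref{l.law_dalpha} that $\supp \d \alpha = \overline{\alpha^{-1}((0,1])}$; in particular $\alpha^{-1}(r) \in \supp \d \alpha$ for every $r \in (s,1]$, and sending $r \downarrow s$ places $t_\star$ in the closed set $\supp \d \alpha$.

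For (c), the monotonicity of $\alpha^{-1}$ shows that any $r \in (0, s]$ satisfies $\alpha^{-1}(r) \leq \alpha^{-1}(s) = t$, while any $r \in (s, 1]$ satisfies $\alpha^{-1}(r) \geq \alpha^{-1}(s+) = t_\star$. Consequently $\alpha^{-1}((0,1]) \subset [0, t] \cup [t_\star, \T]$, and taking closures gives $\supp \d \alpha \subset [0, t] \cup [t_\star, \T]$, which precludes any element of $\supp \d \alpha$ from lying in the open interval $(t, t_\star)$. Overall, the argument is a straightforward bookkeeping exercise combining Lemmas~\ref{l.strict_incr=>supp} and~\ref{l.law_dalpha} with the monotonicity and left-continuity of $\alpha^{-1}$, and the only mild source of friction is keeping track of the dichotomy $t = 0$ versus $t > 0$, which is already absorbed into the statement of Lemma~\ref{l.strict_incr=>supp}.
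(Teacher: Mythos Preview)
Your proof is correct. Both you and the paper start identically, invoking Lemma~\ref{l.strict_incr=>supp} to handle $t=\alpha^{-1}(s)$, but the treatments of $t_\star$ diverge. The paper \emph{defines} $t_\star=\inf\{t'>t:t'\in\supp\d\alpha\}$, argues by contradiction (via a sequence in $\supp\d\alpha$ decreasing to $t$) that $t_\star>t$, and then runs a second contradiction argument---using Lemmas~\ref{l.exists_s_strict_inc} and~\ref{l.strict_incr=>supp} again---to identify $t_\star$ with $\alpha^{-1}(s+)$. You instead set $t_\star=\alpha^{-1}(s+)$ from the outset and exploit the characterization $\supp\d\alpha=\overline{\alpha^{-1}((0,1])}$ from Lemma~\ref{l.law_dalpha} to obtain both $t_\star\in\supp\d\alpha$ and $(t,t_\star)\cap\supp\d\alpha=\emptyset$ in one stroke, by observing that $\alpha^{-1}((0,1])\subset[0,t]\cup[t_\star,T]$ and passing to the closure. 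Your route is shorter and avoids Lemma~\ref{l.exists_s_strict_inc} entirely; the paper's route has the minor advantage of making explicit that $t_\star$ is the \emph{first} point of $\supp\d\alpha$ to the right of $t$, though this is of course implicit in your version as well.
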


\begin{proof}
The second statement clearly implies the first one. We focus on showing the other direction. The first statement implies that $\alpha^{-1}$ is strictly increasing at $s$. Let $t$ be given as in Lemma~\ref{l.strict_incr=>supp} and we have $\alpha(t)=s$ and $t=\alpha^{-1}(s)$.

Then, we show the existence of $t_\star$. Suppose that there is a decreasing sequence $(t_n)_{n\in\N}$ in $\supp\d\alpha$ converging to $t$. Since $\alpha$ is right-continuous, setting $s_n =\alpha(t_n)$, we have $\lim_{n\to\infty} s_n = s$. Using~\eqref{e.alpha^-1alpha(t)=t} and the convergence of $(t_n)_{n\in\N}$, we have $\lim_{n\to\infty} \alpha^{-1}(s_n) = \alpha^{-1}(s)$, which contradicts the assumption of the first statement. Therefore, setting $t_\star=\inf\Ll\{t'>t:\:t'\in\supp\d\alpha\Rr\}$, we must have $t_\star>t$. Since $\supp\d\alpha$ is closed, we also have $t_\star\in\supp\d\alpha$. Lastly, it is clear from the definition that $(t,t_\star)\cap \supp\d\alpha=\emptyset$.

It remains to verify $t_\star = \alpha^{-1}(s+)$.
First, we show
\begin{align}\label{e.alpha-1(s')>t_star}
    \alpha^{-1}(s')\geq t_\star,\quad\forall s'>s.
\end{align}
We argue by contradiction and suppose $\alpha^{-1}(s')>t_\star$ for some $s'>s$. Due to $\alpha^{-1}\circ\alpha(t_\star)=t_\star$ by~\eqref{e.alpha^-1alpha(t)=t}, we have $\alpha^{-1}(s')<\alpha^{-1}(\alpha(t_\star))$ and thus we must have $s'<\alpha(t_\star)$. By Lemma~\ref{l.exists_s_strict_inc} and then Lemma~\ref{l.strict_incr=>supp}, there is $s''\in [s',\alpha(t_\star))$ and $t''\in\supp \d\alpha$ such that $s''=\alpha(t'')$. Then, we must have $t''<t_\star$ and thus
\begin{align*}
    t=\alpha^{-1}(s)<\alpha^{-1}(s')\leq \alpha^{-1}(s'') \stackrel{\eqref{e.alpha^-1alpha(t)=t}}{=}t'' <t_\star.
\end{align*}
In particular, we have $t''\in (t,t_\star)\cap \supp\d\alpha$ and reaches a contradiction. Hence, \eqref{e.alpha-1(s')>t_star} holds.

Due to $t_\star > t$, we have $\alpha(t_\star)\geq \alpha(t)=s$. Also, recall $\alpha^{-1}\circ\alpha(t_\star) =t_\star$ due to~\eqref{e.alpha^-1alpha(t)=t}.
These along with~\eqref{e.alpha-1(s')>t_star} imply $t_\star = \inf_{s'>s}\alpha^{-1}(s')$. Since $\alpha^{-1}$ is increasing, we deduce that $t_\star = \alpha^{-1}(s+)$.
\end{proof}

\subsection{Decompositions}

\subsubsection{Decomposition of one path}

Given $q\in\mcl Q_\infty$ which is a function on $[0,1)$, we denote by $\aq:[0,1]\to S^\D_+$ its left-continuous version defined by
\begin{align}\label{e.aq=}
    \aq(0)=0;\qquad \aq(s)=\lim_{u\uparrow s} q(u),\quad\forall s\in(0,1].
\end{align}
For some $T>0$, a pair $(L,\alpha)$ is said to be a \textbf{decomposition of $q$ (defined on $[0,\T]$)} if $L:[0,\T]\to \S^\D_+$ is Lipschitz and increasing, $\alpha:[0,\T]\to[0,1]$ is a p.d.f.,\ and 
\begin{align}\label{e.aq=Lalpha(0,1]}
    \aq(s) = L\circ \alpha^{-1}(s),\quad\forall s \in (0,1].
\end{align}
It is easy to see that decompositions of $q$ are not unique. In particular, given $\aq$ and $\alpha$, one can only determine $L$ on $\supp\d\alpha$, more precisely,
\begin{align}\label{e.L(t)=qalpha(t+)}
     L (t) = \aq \circ \alpha(t+),\quad\forall t\in \supp\d \alpha.
\end{align}
Indeed, fix any $t\in \supp \d\alpha$, we must have $\alpha(t')>0$ for every $t'>t$. Then, using~\eqref{e.aq=Lalpha(0,1]}, we have $\aq \circ \alpha(t') = L\circ\alpha^{-1}\circ\alpha(t')$ for all $t'>t$ and thus $\aq \circ \alpha(t+) = L\circ\alpha^{-1}\circ\alpha(t+)$. Then, \eqref{e.L(t)=qalpha(t+)} follows from this and~\eqref{e.alpha^-1alpha(t)=t} in Lemma~\ref{l.law_dalpha}. 

Before proceeding, we comment that the value of $\aq(0)$ is insignificant.\footnote{Alternatively, one can set $\aq(0)=\aq(0+)$, which is similar to the choice of $q(1)$ in~\eqref{e.q(1)=}.}
We make the choice in~\eqref{e.aq=} so that when we choose $\alpha$ to satisfy $\tr \aq =\alpha^{-1}$ in the later construction, we indeed have $\tr\aq(0)= \alpha^{-1}(0)=0$. It is also important to notice that the relation in~\eqref{e.aq=Lalpha(0,1]} is not required to hold at $s=0$. Otherwise, we would have $L(0)=\aq(0)$, which is an unwanted restriction beyond~\eqref{e.L(t)=qalpha(t+)} if $0\not\in \supp\d\alpha$.

This decomposition is needed to express the integration of the cascade in terms of the solution to the so-called Parisi PDE. The coefficients of the second-order and first-order terms in this PDE are determined by $L$ and $\alpha$, respectively (see~\eqref{e.Parisi_PDE}).

We comment that taking the left-continuous version $\aq$ is preferred here because $\aq$ resembles a quantile function and thus enjoys better properties due to the duality between quantile functions and p.d.f.s. Since $q$ is right-continuous with left limits, taking $\aq$ does not lose information and we can always recover $q$ from $\aq$ by taking the right-continuous version.

Since there is too much freedom in choosing a decomposition, sometimes, we need to put an extra restriction.
A decomposition $(L,\alpha)$ of some $q$ is said to be \textbf{pinned} if $L(0)=0$.

\subsubsection{Canonical decompositions}

We turn to the existence of such a decomposition.
In the following, we describe the construction of an arguably \textit{canonical} decomposition of $\aq$. 
Let us start by explaining the motivation.

First, we want this decomposition to reflect the phenomenon of synchronization of overlaps in \cite{pan.potts,pan.vec}. 
To explain this, let us denote by $R$ a random variable whose law is the limiting distribution of $\frac{\sigma \sigma'^{\intercal}}{N}$ under $\mathbb{E}\langle \cdot \rangle_N^{\mathrm{pert}}$ as $N \to \infty$, where $\sigma$ and $\sigma'$ are independent samples drawn from $\mathbb{E}\langle \cdot \rangle_N^{\mathrm{pert}}$. 
Here $\langle \cdot \rangle_N^{\mathrm{pert}}$ denotes the Gibbs measure $\langle \cdot \rangle_N$ in~\eqref{e.<>_N=} with a suitably chosen perturbation
added.
Panchenko's synchronization principle in \cite{pan.potts,pan.vec} states that there exists some Lipschitz path $L$ such that $R=L(\tr(R))$ a.s. If we denote by $\alpha$ the probability distribution function of the law of $\tr(R)$, then we have $\tr(R)\stackrel{\d}{=} \alpha^{-1}(U)$ where $U$ is the uniform random variable on $[0,1]$. Setting $\aq = L\circ\alpha^{-1}$, we have $R\stackrel{\d}{=} \aq(U)$. Then, we should expect $\tr \aq = \alpha^{-1}$ which is enough to determine $\alpha$.
After fixing $\alpha$, the value of $L$ on $\{0\}\cup\supp\d\alpha$ is completely determined via~\eqref{e.L(t)=qalpha(t+)}. There is freedom in choosing its value outside $\{0\}\cup\supp\d\alpha$ and we simply use linear interpolations. 

With the above explained, we are ready to give the definition. Given $q\in \mcl Q_\infty$, we call $(L,\alpha)$ the \textbf{canonical decomposition of $q$} if $\alpha:[0,\T]\to [0,1]$ is a p.d.f.\ satisfying $\alpha^{-1}=\tr \aq$ with $\T=\tr \aq(1)$ and $L:[0,\T]\to\S^\D_+$ is given by
\begin{align}\label{e.L=linear}
\begin{split}
    L (t)  &= \overrightarrow{q} \circ \alpha(t+),\quad\forall t\in\{0\}\cup\supp\d \alpha;
    \\
    L(t) & = \frac{t^\mathrm{r}-t}{t^\mathrm{r}-t^\mathrm{l}}L\Ll(t^\mathrm{l}\Rr) + \frac{t-t^\mathrm{l}}{t^\mathrm{r}-t^\mathrm{l}}L\Ll(t^\mathrm{r}\Rr),\quad\forall t\in (0,\T]\setminus \supp\d\alpha,
\end{split}
\end{align}
where for every $t\in[0,\T]$ we set
\begin{align}\label{e.t^lt^r}
    t^\mathrm{l}= \sup\Ll\{r\in\supp\d\alpha:\:r\leq t\Rr\},\qquad t^\mathrm{r}= \inf\Ll\{r\in\supp\d\alpha:\:r\geq t\Rr\}.
\end{align}
When the infimum in $t^\mathrm{r}$ is taken over an empty set, we understand $t^\mathrm{r}=+\infty$ and $L(t) = L(t^\mathrm{l})$ in~\eqref{e.L=linear}.

Notice that in addition to values of $L$ in~\eqref{e.L(t)=qalpha(t+)} that are determined by $\aq$ and $\alpha$, we prescribe the value of $L$ at $0$ in~\eqref{e.L=linear}.

\begin{lemma}\label{l.decomp_q}
For any $q\in \mcl Q_\infty$, let $(L,\alpha)$ be the canonical decomposition of $q$ described above. Then, $(L,\alpha)$ is a pinned decomposition of $q$.
Moreover, $\tr L(t) = t$ for every $t\in[0,\T]$ and $\|L\|_\mathrm{Lip}\leq\sqrt{\D}$.
\end{lemma}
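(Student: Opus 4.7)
The plan is to verify the four ingredients in turn: that $\alpha$ is a well-defined p.d.f.\ on $[0,T]$; that $\tr L(t)=t$ on $[0,T]$, which immediately forces the pinning $L(0)=0$; that the decomposition identity $\aq(s)=L\circ\alpha^{-1}(s)$ holds on $(0,1]$; and finally that $L$ is $S^D_+$-increasing and $\sqrt{D}$-Lipschitz.

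First, $\tr\aq:[0,1]\to[0,T]$ is left-continuous and increasing with $\tr\aq(0)=0$ and $\tr\aq(1)=T$, so the dual formula $\alpha(t):=\sup\{s\in[0,1]:\tr\aq(s)\leq t\}$ (cf.\ Lemma~\ref{l.alpha(t)=sup}) produces a right-continuous increasing function on $[0,T]$ with $\alpha(T)=1$ and quantile function exactly $\tr\aq$; this supplies $\alpha$. Next, for $t\in\{0\}\cup\supp\d\alpha$ the first line of~\eqref{e.L=linear} and \eqref{e.alpha^-1alpha(t)=t} combine to give $\tr L(t)=\alpha^{-1}(\alpha(t+))=t$, and on each gap $(t^\mathrm{l},t^\mathrm{r})$ of $[0,T]\setminus\supp\d\alpha$ the affine interpolation in~\eqref{e.L=linear} preserves the trace linearly between $t^\mathrm{l}$ and $t^\mathrm{r}$. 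Taking $t=0$ we see $L(0)\in S^D_+$ has zero trace and thus vanishes, yielding the pinning.

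The main step is the decomposition identity. Fix $s\in(0,1]$ and set $t:=\alpha^{-1}(s)=\tr\aq(s)$. By Lemma~\ref{l.law_dalpha} the positivity $s>0$ places $t$ in $\supp\d\alpha$, so $L(t)=\aq\circ\alpha(t+)$ by the first line of~\eqref{e.L=linear}. Writing $s':=\alpha(t+)$, we obtain simultaneously $s\leq\alpha\circ\alpha^{-1}(s)\leq s'$ (from~\eqref{e.s<alphaalpha^-1(s)} and monotonicity of $\alpha$) and $\tr\aq(s')=\alpha^{-1}(s')=t=\tr\aq(s)$ (from~\eqref{e.alpha^-1alpha(t)=t}). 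Now $\aq(s')-\aq(s)\in S^D_+$ has zero trace, hence vanishes, so $\aq(s)=\aq(s')=L(t)=L\circ\alpha^{-1}(s)$. I expect this to be the main obstacle: one has to recognize the rigidity that an $S^D_+$-monotone matrix-valued path is automatically constant on every interval where its trace is constant, which is precisely what unlocks the otherwise asymmetric-looking formula $L(t)=\aq\circ\alpha(t+)$.

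Finally, monotonicity of $\aq$ together with the convex-combination structure in~\eqref{e.L=linear} show that $L(t')-L(t)\in S^D_+$ for every $0\leq t\leq t'\leq T$; combined with the trace identity, this increment has trace exactly $t'-t$. For any $M\in S^D_+$ with eigenvalues $\lambda_1,\dots,\lambda_D\geq 0$ one has $|M|^2=\sum_i\lambda_i^2\leq D\max_i\lambda_i^2\leq D(\tr M)^2$, so $|L(t')-L(t)|\leq \sqrt{D}\,(t'-t)$, which is the desired Lipschitz bound.
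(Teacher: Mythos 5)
Your proof follows the same route as the paper's: construct $\alpha$ as the p.d.f.\ dual to $\tr\aq$, derive $\tr L(t)=t$, and then exploit the rigidity that an $S^D_+$-increasing matrix path with constant trace is constant (via \eqref{e.tr(a)sim|a|}) to get $\aq=L\circ\alpha^{-1}$ on $(0,1]$; the monotonicity and Lipschitz arguments are also the paper's. There is, however, one genuine (if small) gap in your main step. You set $t=\alpha^{-1}(s)$ and $s'=\alpha(t+)$, prove $\aq(s)=\aq(s')$ by the trace argument, and then write $\aq(s')=L(t)$. But under the convention \eqref{e.rho(r+)=}, the first line of \eqref{e.L=linear} defines $L(t)$ as the right limit of the \emph{composite} function, $L(t)=\lim_{t'\downarrow t}\aq(\alpha(t'))$, which is not a priori the same as $\aq\bigl(\lim_{t'\downarrow t}\alpha(t')\bigr)=\aq(\alpha(t))=\aq(s')$, because $\aq$ is only left-continuous; the paper explicitly warns against exactly this conflation right after Lemma~\ref{l....=t}. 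If $\alpha$ is strictly increasing immediately to the right of $t$ and $\aq$ jumps at $s'$, then $\lim_{t'\downarrow t}\aq(\alpha(t'))=\aq(s'+)\neq\aq(s')$, so the equality you assert needs an argument. The fix is one more application of the same trace rigidity: $L(t)\geq\aq(\alpha(t))=\aq(s')$ by monotonicity, while $\tr L(t)=\alpha^{-1}\circ\alpha(t+)=t=\alpha^{-1}(s')=\tr\aq(s')$ by \eqref{e.alpha^-1alpha(t)=t} and Lemma~\ref{l....=t}, whence $L(t)=\aq(s')$ by \eqref{e.tr(a)sim|a|}. This is precisely the second trace comparison in the paper's proof, which you omitted.

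Two smaller points. First, your justification of $\alpha^{-1}(s')=t$, i.e.\ $\alpha^{-1}(\alpha(t))=t$, should cite the first identity of Lemma~\ref{l....=t} (valid because $t=\alpha^{-1}(s)\in\alpha^{-1}([0,1])$) rather than \eqref{e.alpha^-1alpha(t)=t}, which concerns the right limit $\alpha^{-1}\circ\alpha(t+)$. Second, the claim that the interpolation in \eqref{e.L=linear} produces an increasing $L$ on all of $[0,\T]$ is true but not immediate from "the convex-combination structure" alone: one must check the two cases of whether $t\leq t'$ lie in the same complementary gap of $\supp\d\alpha$ or in different ones, as the paper does in a footnote. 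These are routine, but the first gap above is the one place where your chain of equalities, as written, does not go through.
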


For a canonical decomposition $(L,\alpha)$, we can say that $L$ has unit speed due to $\tr L(t)=t$.

\begin{proof}[Proof of Lemma~\ref{l.decomp_q}]
We verify that $L$ is increasing.
For convenience, we start by observing that we can extend the identity on the second line of \eqref{e.L=linear} to every $t \in [0,T]$ with the understanding that, when $t \in \supp \d \alpha$ and thus $t=t^\mathrm{l}=t^\mathrm{r}$,  both fractions in~\eqref{e.L=linear} are set to be $1$.
Fix any $t\leq {t'}$ and let $t^\mathrm{l},t^\mathrm{r},{t'}^\mathrm{l},{t'}^\mathrm{r}$ be given as in~\eqref{e.t^lt^r}. Due to their definitions, we can see that there are two possible cases: $t^\mathrm{l}={t'}^\mathrm{l}$ and $t^\mathrm{r}={t'}^\mathrm{r}$; or $t^\mathrm{r}\leq {t'}^\mathrm{l}$.\footnote{Indeed, if the first case does not hold, then either we have $t^\mathrm{l}<{t'}^\mathrm{l}$ or $t^\mathrm{r}<{t'}^\mathrm{r}$. In the former case, if ${t'}^\mathrm{l}<t^\mathrm{r}$, we must have ${t'}^\mathrm{l}<t$ (otherwise we have $t\leq {t'}^\mathrm{l}<t^\mathrm{r}$ contradicting the definition of $t^\mathrm{r}$). But, then we have $t^\mathrm{l}<{t'}^\mathrm{r}<t$ which contradicts the definition of $t^\mathrm{l}$. Hence, we are only left with the possibility $t^\mathrm{r}<{t'}^\mathrm{r}$. Then, we must have ${t'}>t^\mathrm{r}$ (otherwise we have ${t'}\leq t^\mathrm{r}<{t'}^\mathrm{r}$ contradicts the definition of ${t'}^\mathrm{r}$). Now, ${t'}>t^\mathrm{r}$ implies $t^\mathrm{r}\leq {t'}^\mathrm{l}$.} In both cases, since $\aq\circ\alpha$ is increasing, we can get from~\eqref{e.L=linear} that $L(t)\leq L({t'})$.

Taking the trace on both sides of the expressions of $L$ in~\eqref{e.L=linear}, and using $\alpha^{-1}=\tr\aq$ and~\eqref{e.alpha^-1alpha(t)=t}, we can see that $\tr L(t) =t$ for every $t\in [0,\T]$.

To show that $L$ is Lipschitz, we need the following basic result.
For any $a\in \S^\D_+$, by diagonalizing $a$, we have $|a| = \sqrt{\sum_{i=1}^\D \lambda_i^2}$ where $(\lambda_i)_{i=1}^\D$ are eigenvalues of $a$, which implies
\begin{align}\label{e.tr(a)sim|a|}
    D^{-\frac{1}{2}}\tr(a) \leq |a|\leq D^\frac{1}{2}\tr(a),\quad\forall a\in \S^\D_+.
\end{align}
Using that $L$ is increasing, \eqref{e.tr(a)sim|a|}, and $\tr L(t)=t$ for $t\in[0,\T]$, we have
\begin{align*}
    \Ll|L(t)-L(t')\Rr|\leq D^\frac{1}{2}\Ll|\tr L(t)-\tr L(t')\Rr| = D^\frac{1}{2}\Ll|t-t'\Rr|
\end{align*}
which verifies that $L$ is Lipschitz with coefficient bounded by $\sqrt{D}$.

We verify $\aq = L\circ\alpha^{-1}$ on $(0,1]$. 
Using~\eqref{e.s<alphaalpha^-1(s)} and the monotonicity of $\aq$, we have $\aq\circ\alpha\circ\alpha^{-1}(s)\geq \aq(s)$ for every $s$. Then,
\begin{align*}
    \tr\Ll(\aq\circ\alpha\circ\alpha^{-1}(s) - \aq(s)\Rr) = \alpha^{-1}\circ\alpha\circ\alpha^{-1}(s)- \alpha^{-1}(s)\stackrel{\text{L.\ref{l....=t}}}{=}0.
\end{align*}
Now, due to~\eqref{e.tr(a)sim|a|}, we must have $\aq(s)=\aq\circ\alpha\circ\alpha^{-1}(s)$ for every $s$. It remains to show $L\circ\alpha^{-1}(s)= \aq\circ\alpha\circ\alpha^{-1}(s)$ for every $s\in (0,1]$. Set $t=\alpha^{-1}(s)$. By the first line in~\eqref{e.L=linear} and the characterization of $\supp\d\alpha$ in Lemma~\ref{l.law_dalpha}, we have $L\circ\alpha^{-1}(s) = \aq\circ\alpha\Ll(t+\Rr)$. By monotonicity, we have $\aq\circ\alpha\Ll(t+\Rr)\geq \aq\circ\alpha\Ll(t\Rr)$. Taking the trace, we get
\begin{align*}
    \tr\Ll( \aq\circ\alpha\Ll(t+\Rr)-\aq\circ\alpha\Ll(t\Rr)\Rr) = \alpha^{-1}\circ\alpha(t+) - \alpha^{-1}\circ\alpha(t) \stackrel{\text{L.\ref{l....=t}\,\&\,\ref{l.law_dalpha}}}{=}t -t=0,
\end{align*}
which by~\eqref{e.tr(a)sim|a|} implies $\aq\circ\alpha\Ll(t+\Rr)=\aq\circ\alpha\Ll(t\Rr)$ and thus $L\circ\alpha^{-1}(s)=\aq\circ\alpha\circ\alpha^{-1}(s)$. As commented earlier, this verifies $\aq = L\circ\alpha^{-1}$ on $(0,1]$. Hence, we conclude that $(L,\alpha)$ is a decomposition of $q$.
\end{proof}

\subsubsection{Joint decomposition of multiple paths}\label{s.joint_decomp}

For $n\in\N$, given $q_1,\dots ,q_n \in \mcl Q_\infty$, we define a new path $\boldsymbol{q}:[0,1]\to \S^{n\D}_+$ by setting $\boldsymbol{q}(s) = \mathrm{diag} \Ll(q_1(s),\dots ,q_n(s)\Rr)$ for every $s\in[0,1]$.
We define its left-continuous version similarly as $\overrightarrow{\boldsymbol{q}}(s) = \lim_{u\uparrow s}\boldsymbol{q}(s)$ for $s\in(0,1]$ and $\overrightarrow{\boldsymbol{q}}(0)=0$.
A tuple $(L_1,\dots ,L_n,\alpha)$ is said to be a \textbf{joint decomposition} of $q_1,\dots,q_n$ if $(L_k,\alpha)$ is a decomposition of $q_k$ for every $k\in\{1,\dots,n\}$ for a common p.d.f.\ $\alpha:[0,\T]\to[0,1]$. 
Defining $\boldsymbol{L}(t) = \mathrm{diag}\Ll(L_1(t),\dots,L_n(t)\Rr)$, we can see that this is equivalent to that $(\boldsymbol{L},\alpha)$ is a decomposition of $\boldsymbol{q}$ (with the ambient matrix space $\S^{n\D}_+$). Similarly, a joint decomposition $(\boldsymbol{L},\alpha)$ is said to be \textbf{pinned} if $\boldsymbol{L}(0)=0$.

We can construct the canonical decomposition similarly. A tuple $(L_1,\dots ,L_n,\alpha)$ is said to be the \textbf{canonical joint decomposition} of $q_1,\dots,q_n$ if $(\boldsymbol{L},\alpha)$ is the canonical decomposition of $\boldsymbol{q}$. Then, $(\boldsymbol{L},\alpha)$ enjoys the properties generalized in the obvious way (i.e.\ with $\D$ replaced by $n\D$) in Lemma~\ref{l.decomp_q}. 
The canonical joint decomposition is given explicitly by
\begin{align}\label{e.alpha=_joint_decomp}
    \alpha^{-1}(s) = \sum_{k=1}^n \tr \overrightarrow{q_k}(s),\quad\forall s \in[0,1],
\end{align}
and for every $k\in\{1,\dots,n\}$,
\begin{align}\label{e.L_k=_joint_decomp}
\begin{split}
    L_k (t)  &= \overrightarrow{q_k} \circ \alpha(t+),\quad\forall t\in\{0\}\cup\supp\d \alpha;
    \\
    L_k(t) &= \frac{t^\mathrm{r}-t}{t^\mathrm{r}-t^\mathrm{l}}L_k\Ll(t^\mathrm{l}+\Rr) + \frac{t-t^\mathrm{l}}{t^\mathrm{r}-t^\mathrm{l}}L_k\Ll(t^\mathrm{r}+\Rr),\quad\forall t\in (0,\T]\setminus \supp\d\alpha
\end{split}
\end{align}
for $t^\mathrm{l}$ and $t^\mathrm{r}$ in~\eqref{e.t^lt^r}.

\section{The Parisi PDE}
\label{s.parisi.pde}

In this section, we recall basic properties of the Parisi PDE.
We fix some $\T>0$ throughout this section.
For a p.d.f.\ $\alpha$ on $[0,\T]$, a Lipschitz increasing path $L:[0,\T]\to\S^\D_+$, and a smooth function $\phi:\R^\D\to\R$ with bounded derivatives, we consider the Parisi PDE
\begin{gather}\label{e.Parisi_PDE}
    \partial_t \Phi(t,x) + \la \dot L(t)\,,\,\nabla^2\Phi(t,x)+\alpha(t)\nabla \Phi\nabla \Phi^\intercal(t,x)\ra_{\S^D}  =0, 
\end{gather}
for $(t,x)\in [0,\T]\times \R^\D$ with terminal condition $\Phi(T,\cdot) = \phi$. The notation $\dot L$ stands for the time derivative of $L$, which is defined almost everywhere since $L$ is Lipschitz. 
For our purposes, we consider the following class of initial conditions
\begin{align}\label{e.phi=initial}
    \phi(x) = \log\int \exp\left(\sigma\cdot x - \sigma\sigma^\intercal\cdot L(T)\right) \d \mu(\sigma),\quad\forall x\in \R^\D,
\end{align}
where $\mu$ is a (positive) finite measure compactly supported on $\R^\D$.

We describe a probabilistic way of constructing the solution to \eqref{e.Parisi_PDE}. We start by solving the equation explicitly for smooth $L$ and a step function $\alpha$. We denote by $\mcl M$ the collection of p.d.f.s on $[0,\T]$ and by $\mcl M_\d$ its subcollection consisting of $\alpha$ of the form:
\begin{align}\label{e.alpha=}
    \alpha = \sum_{l=0}^{K}(s_{l}-s_{l-1})\one_{[t_l,\infty)} = \sum_{l=0}^{K-1} s_l \one_{[t_l,t_{l+1})}  + s_K \one_{\{t_K\}}
\end{align}
where $s_{-1},s_0,\dots,s_{K}\in[0,1]$ and $t_0,t_1,\dots,t_K\in [0,\T]$ satisfy
\begin{align*}0=s_{-1}\leq s_0< \dots<s_{K} = 1,\qquad 
    0= t_0\leq t_1<\dots< t_K=T.
\end{align*}
Since for $\alpha\in\mcl M_\d$ we have $\alpha(t)\leq s_{K-1}<1$ for every $t<T$ and $\alpha(T)=1$, we get
\begin{align}\label{e.alpha^-1=T_discrete}
    \alpha^{-1}(1)=T,\qquad\forall \alpha\in\mcl M_\d.
\end{align}
In other words, $T$ is the right endpoint of $\supp\d \alpha$ whenever $\alpha\in \mcl M_\d$.

\begin{definition}[Parisi PDE solution for smooth $L$ and discrete $\alpha$]\label{d.sol_pde_discrete}

For every smooth increasing path $L:[0,\T]\to\S^\D_+$ and $\alpha \in \mcl M_\d$, the \textbf{Parisi PDE solution} $\Phi=\Phi_{\mu,L,\alpha}$ is defined as follows:
\begin{itemize}
    \item Set $\Phi(\T,\cdot) = \phi$ as in~\eqref{e.phi=initial}.
    \item Inductively, for every $l\in \{1,\cdots,K\}$, $t\in [t_{l-1},t_l)$ and $x\in\R^\D$, define
    \begin{align}\label{e.Phi(s,x)}
        \Phi(t,x) = \frac{1}{s_{l-1}}\log\E\exp\left(s_{l-1} \Phi\left(t_l,\,\sqrt{2L(t_l)-2L(t)}g_l+x\right)\right)
    \end{align}
    where $(g_l)_{l\in\{0,\dots,K\}}$ are independent standard $\R^\D$-valued Gaussian vectors.
\end{itemize}
\end{definition}

For $l = 1$, the right-hand side of~\eqref{e.Phi(s,x)} is understood to be $\E \Phi(t_1, \sqrt{2L(t_1)-2L(t)}g_1+x)$ if $s_0=0$.

It is well-known that the Parisi PDE solution for discrete $\alpha$ satisfies the equation in the classical sense except at discontinuity points of $\alpha$.

\begin{lemma}\label{l.PDE}
Let $\Phi = \Phi_{\mu,L,\alpha}$ for a smooth increasing path $L:[0,\T]\to\S^\D_+$ and $\alpha \in \mcl M_\d$ as in~\eqref{e.alpha=}.
For every $l\in\{1,\dots,K\}$ and at every $(t,x)\in (t_{l-1},t_l)\times \R^\D$, the function $\Phi$ is differentiable and equation~\eqref{e.Parisi_PDE} is satisfied.
\end{lemma}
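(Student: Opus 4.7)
The plan is to proceed by backward induction on $l \in \{K, K-1, \ldots, 1\}$, showing that on each open interval $(t_{l-1}, t_l)$ the function $\Phi$ is smooth in $x$ and $C^1$ in $t$, and that it solves the Parisi PDE classically there. The key observation is that on $(t_{l-1}, t_l)$ the function $\alpha$ is constantly equal to $s_{l-1}$, so the equation \eqref{e.Parisi_PDE} reduces on this strip to
\begin{equation*}
\partial_t \Phi + \dot L(t)\cdot\bigl(\nabla^2 \Phi + s_{l-1}\,\nabla\Phi\,\nabla\Phi^\intercal\bigr) = 0.
\end{equation*}
The base of the induction is the smoothness of the terminal condition $\phi$ in \eqref{e.phi=initial}, which follows from the assumption that $\mu$ is compactly supported: the derivatives of $\phi$ are bounded Gibbs averages of monomials in $\sigma$, and all derivatives are bounded uniformly in $x$.

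For the inductive step, fix $l$ and assume $\Phi(t_l,\cdot)$ is smooth with derivatives of every order bounded. Consider first the case $s_{l-1}>0$ and set $F(t,x) = \exp\bigl(s_{l-1}\Phi(t,x)\bigr)$. By \eqref{e.Phi(s,x)},
\begin{equation*}
F(t,x) = \E\Bigl[\exp\bigl(s_{l-1}\Phi(t_l, x+\sqrt{2L(t_l)-2L(t)}\,g_l)\bigr)\Bigr],\qquad t\in[t_{l-1},t_l].
\end{equation*}
Since $L(t_l)-L(t') = \bigl(L(t_l)-L(t)\bigr) + \bigl(L(t)-L(t')\bigr)$ for $t_{l-1}\le t'\le t\le t_l$, and a sum of independent Gaussians is Gaussian with added covariance, one obtains the semigroup identity
\begin{equation*}
F(t',x) = \E\Bigl[F\bigl(t, x+\sqrt{2L(t)-2L(t')}\,g\bigr)\Bigr].
\end{equation*}
Smoothness of $F$ in $x$ is inherited from the inductive hypothesis; the boundedness of derivatives of $\Phi(t_l,\cdot)$ together with Gaussian moment bounds justifies differentiation under the expectation. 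Taylor-expanding $F$ in $x$ to second order inside the expectation on the right-hand side and using $L(t)-L(t') = \dot L(t)(t-t') + o(t-t')$ (valid wherever $\dot L$ exists, i.e.\ a.e.), one concludes that at such $t \in (t_{l-1},t_l)$,
\begin{equation*}
\partial_t F(t,x) + \dot L(t)\cdot\nabla^2 F(t,x) = 0.
\end{equation*}
Since $L$ is smooth by assumption in the present setting, $\dot L$ is continuous, so this identity extends to all of $(t_{l-1},t_l)$ and $F$ is in fact $C^1$ in $t$ there.

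Finally, by the chain rule applied to $\Phi = s_{l-1}^{-1}\log F$,
\begin{equation*}
\nabla F = s_{l-1}F\,\nabla\Phi,\qquad \nabla^2 F = s_{l-1}F\,\bigl(\nabla^2\Phi + s_{l-1}\,\nabla\Phi\,\nabla\Phi^\intercal\bigr),
\end{equation*}
and $\partial_t F = s_{l-1}F\,\partial_t\Phi$. Substituting into the heat equation for $F$ and dividing by $s_{l-1}F>0$ yields the desired Parisi PDE for $\Phi$ on $(t_{l-1},t_l)$. In the special case $s_{l-1}=0$ (which can only occur for $l=1$), the formula \eqref{e.Phi(s,x)} makes $\Phi$ itself the Gaussian convolution, and the same Taylor-expansion argument applied directly to $\Phi$ gives $\partial_t \Phi + \dot L\cdot\nabla^2\Phi = 0$, which is exactly \eqref{e.Parisi_PDE} with $\alpha(t)=0$. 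This closes the induction.

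The main technical point, and the only place where care is required, is propagating the smoothness and the boundedness of derivatives of $\Phi(t_l,\cdot)$ to $\Phi(t,\cdot)$ for $t<t_l$, so as to justify the exchanges of derivative and expectation and the Taylor expansion with a uniform remainder. This is routine given the compact support of $\mu$ and standard Gaussian estimates, but it is what makes the induction go through.
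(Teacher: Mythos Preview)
Your proof is correct and follows precisely the approach the paper alludes to when it cites \cite[Lemma~2.4]{chen2022pde}: the Hopf--Cole substitution $F=\exp(s_{l-1}\Phi)$ linearizes the equation on each strip to a backward heat equation with diffusion matrix $\dot L(t)$, and undoing the substitution recovers~\eqref{e.Parisi_PDE}. The paper itself provides no argument beyond that citation, so there is nothing further to compare.
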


We refer to \cite[Lemma~2.4]{chen2023parisi} for a proof, which uses the Hopf--Cole transformation and a direct computation.

We equip $\mcl M$ with the metric induced by $L^1$-norm.
For any two normed spaces $\mcl X$ and $\mcl Y$, let $C(\mcl X;\mcl Y)$ be the collection of $\mcl Y$-valued continuous functions on $\mcl X$ equipped with the uniform norm.
We extend Definition~\ref{d.sol_pde_discrete} to general $L$ and $\alpha$. 
A sequence $((L_n,\alpha_n))_{n\in\N}$ is said to converge to $(L,\alpha)$ if $(\alpha_n)_{n\in\N}$ converges to $\alpha$ in $\mcl M$ and $(L_n)_{n\in\N}$ of converges to $L$ in $C([0,\T];\S^\D_+)$.

\begin{definition}[Parisi PDE solution]\label{d.sol_pde}

Given a Lipschitz function $L$, $\alpha\in\mcl M$, and an initial condition $\phi$ in~\eqref{e.phi=initial}, the associated \textbf{Parisi PDE solution} $\Phi_{\mu,L,\alpha}$ is the limit in $C([0,\T]\times \R^\D;\R)$ 
of $\Ll(\Phi_{\mu,L_n,\alpha_n}\Rr)_{n\in\N}$ (given in Definition~\ref{d.sol_pde_discrete}) for any sequence $((L_n,\alpha_n))_{n\in\N}$ converging to $(L,\alpha)$ where $L_n$ is smooth and $\alpha_n\in \cM_\d$ for each $n$.
\end{definition}

\begin{lemma}
The Parisi PDE solution in Definition~\ref{d.sol_pde} is well-defined and independent of the approximation sequences. 
\end{lemma}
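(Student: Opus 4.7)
The plan is to reduce well-definedness to the Lipschitz continuity of the cascade free energy from Corollary~\ref{c.interpolation} via a cascade representation of $\Phi_{\mu,L,\alpha}$ in the smooth-discrete setting.

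First I would verify by backward induction on~\eqref{e.Phi(s,x)} that, for smooth increasing $L : [0,T] \to \S^\D_+$ and $\alpha \in \mcl M_\d$ of the form~\eqref{e.alpha=}, the solution admits the cascade representation
\begin{align*}
\Phi_{\mu,L,\alpha}(0, x)
= \E \log \iint \exp\Ll( \sqrt{2}\, \sigma \cdot w^q(\brho) + \sigma \cdot x - L(T) \cdot \sigma \sigma^\intercal \Rr) \d\mu(\sigma) \, \d\mathfrak{R}(\brho)
= \mathbf{f}_\mu(q, x, 0),
\end{align*}
where $q \in \mcl Q_\infty$ is the right-continuous path whose left-continuous version equals $L \circ \alpha^{-1}$, and $\mathbf{f}_\mu$ is defined in~\eqref{e.f(q),<>_q=}. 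This is the standard correspondence between the iterated log-exp formula~\eqref{e.Phi(s,x)} and a $K$-level Ruelle cascade: realize $\mathfrak{R}$ as a $K$-level tree with branching weights dictated by $(s_0,\dots,s_K)$, and match the independent Gaussian increments $\sqrt{2L(t_l)-2L(t_{l-1})}\, g_l$ with the covariance increments of $w^q$ along the branches, using~\eqref{e.E[ww]=}. An analogous representation holds for $\Phi_{\mu,L,\alpha}(t,x)$ on the restricted interval $[t,T]$, so the estimates below extend to all $(t,x)$.

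Given this representation, Corollary~\ref{c.interpolation} with $z_1=z_2=0$ yields
\begin{align*}
\Ll\| \Phi_{\mu,L_1,\alpha_1}(0,\cdot) - \Phi_{\mu,L_2,\alpha_2}(0,\cdot) \Rr\|_\infty \leq \|q_1 - q_2\|_{L^1},
\end{align*}
where $q_i$ is the path associated with $(L_i,\alpha_i)$. The classical identity $\|\alpha_1^{-1}-\alpha_2^{-1}\|_{L^1}=\|\alpha_1-\alpha_2\|_{L^1}$ (both compute the area between the graphs), together with the Lipschitz property of $L_i$ and the triangle inequality, controls $\|q_1-q_2\|_{L^1}$ by $\|L_1-L_2\|_\infty + \|L_2\|_\mathrm{Lip}\, \|\alpha_1-\alpha_2\|_{L^1}$. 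A uniform Lipschitz estimate of $\Phi_{\mu,L,\alpha}$ in $(t,x)$, uniformly for $x$ in compact sets, follows from the support bound on $\mu$ and the $(t,x)$-version of the same representation. Consequently, any sequence $((L_n,\alpha_n))_{n\in\N}$ of the smooth-discrete class converging to $(L,\alpha)$ produces a Cauchy sequence $(\Phi_{\mu,L_n,\alpha_n})_{n\in\N}$ in $C([0,T]\times\R^\D;\R)$; applying the same estimate to the difference of two such sequences with the same limit gives independence.

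The main obstacle is the first step: while the scalar version of this cascade representation is classical, carefully translating the discrete Parisi iteration~\eqref{e.Phi(s,x)} with matrix-valued jumps $L(t_l)-L(t_{l-1})\in \S^\D_+$ into the cascade $w^q$ requires some bookkeeping, done by realizing $\mathfrak{R}$ as a finite-depth Ruelle cascade indexed by the support of $\alpha$ and matching the covariance structure via~\eqref{e.E[ww]=}.
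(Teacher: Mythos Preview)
Your strategy is exactly the paper's: obtain a cascade representation of $\Phi_{\mu,L,\alpha}$ in the smooth/discrete case and then invoke Corollary~\ref{c.interpolation} to show the sequence is Cauchy. The paper does precisely this, writing $\Phi_{\mu,L,\alpha}(t,x)=\mathbf{f}_\mu(\pi_t,x,-L(t))$ with $\pi_t=L\circ\alpha_{[t}^{-1}-L(t)$ and $\alpha_{[t}=\alpha\one_{[t,T]}$, and also checks that approximating sequences exist (a point you omit).

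There is, however, a genuine error in the representation you assert. You claim $\Phi_{\mu,L,\alpha}(0,x)=\mathbf{f}_\mu(q,x,0)$ with $\aq=L\circ\alpha^{-1}$. This is only correct when $L(0)=0$. In general, writing $q=\pi_0+L(0)$ and realizing $w^q=w^{\pi_0}+\sqrt{L(0)}\,\eta$ with $\eta$ a standard Gaussian constant in $\brho$, one gets
\[
\mathbf{f}_\mu(q,x,0)=\E_\eta\bigl[\mathbf{f}_\mu(\pi_0,\,x+\sqrt{2L(0)}\,\eta,\,-L(0))\bigr]=\E_\eta\bigl[\Phi_{\mu,L,\alpha}(0,\,x+\sqrt{2L(0)}\,\eta)\bigr],
\]
which is an extra Gaussian average of $\Phi$ rather than $\Phi$ itself. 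The backward induction you allude to would reveal this: the discrete recursion~\eqref{e.Phi(s,x)} introduces Gaussians only of variance $L(t_l)-L(t_{l-1})$, never one of variance $L(0)$. The correct identity is $\Phi_{\mu,L,\alpha}(0,x)=\mathbf{f}_\mu(\pi_0,x,-L(0))$ (and for general $t$ with the $\pi_t$ above), which is what the paper uses. Consequently your application of Corollary~\ref{c.interpolation} with $z_1=z_2=0$ is not justified; one needs $z_i=-L_i(0)$, producing the extra term $|L_1(0)-L_2(0)|$. This extra term is harmless (it is dominated by $\|L_1-L_2\|_{L^\infty}$), so once the representation is corrected the rest of your argument goes through unchanged.
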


\begin{proof}
First, we show the existence of approximation sequences. We can approximate $\alpha^{-1}$ by step functions $\alpha^{-1}_n$ satisfying $\alpha^{-1}_n(1)=\alpha^{-1}(1)$ and then take right-continuous inverses to get $\alpha_n$. This gives a sequence approximating $\alpha$ in $\mcl M$. Next, let $\Lambda:\R_+\to\R_+$ be a smooth function compactly supported on $(0,1)$ satisfying $\int \Lambda =1$. For each $\eps>0$, we take $\Lambda_\eps = \frac{1}{\eps}\Lambda\Ll(\frac{\cdot}{\eps}\Rr)$. We extend $L$ to be defined on $\R$ by setting $L(t)=L(0)$ for $t<0$ and $L(t)=L(\T)$ for $t>T$, and we set 
\begin{align}\label{e.L_n=}
	L_n (t) = \int \Lambda_{1/n}(t') L (t-t')\d t',\quad\forall t\in[0,\T].
\end{align}
It is easy to see that $L_n$ is still increasing and Lipschitz with $\|L_n\|_\mathrm{Lip}\leq \|L\|_\mathrm{Lip}$. Moreover, $(L_n)_{n\in\N}$ converges to $L$ uniformly. 

Next, we recall a representation of the Parisi PDE solution associated with smooth $L$ and discrete $\alpha$ \cite[Lemma~2.7]{chen2023parisi}:
\begin{align}\label{e.Phi=Ecascade_discrete}
    \Phi_{\mu,L,\alpha}(t,x) = \E \log \iint \exp\Ll(\sqrt{2}\sigma\cdot w^{\pi_t}(\brho)+\sigma\cdot x -\sigma\sigma^\intercal \cdot L(T)\Rr)\d \mu(\sigma)\d \mathfrak{R}(\brho)
\end{align}
for every $(t,x)\in [0,\T]\times\R^\D$,
where $\pi_t = L\circ \alpha^{-1}_{[t}(\cdot) - L(t)$  and $\alpha_{[t}=\alpha \one_{[t,\T]}$.
Using the representation~\eqref{e.Phi=Ecascade_discrete}, we want to obtain estimates of Parisi PDE solutions in terms of $L$ and $\alpha$. 
Due to $\alpha_{[t}\in\mcl M_\d$ and~\eqref{e.alpha^-1=T_discrete}, we have $\pi_t(1) = L(T)-L(t)$ for every $t$, which allows us to rewrite~\eqref{e.Phi=Ecascade_discrete} into
\begin{align}\label{e.Phi=f_discrete}
    \Phi_{\mu,L,\alpha}(t,x) = \mathbf{f}_\mu(\pi_t,x,-L(t)),\quad\forall (t,x)\in[0,T]\times \R^\D,\ \alpha\in \mcl M_\d.
\end{align}
Let $L'$ be smooth and $\alpha'\in\mcl M_\d$. 
For any fixed $t$, we define $\pi'_t$ and $\alpha'_{[t}$ similarly, and we have $\Phi_{\mu,L',\alpha'}=\mathbf{f}_\mu(\pi'_t,x,-L'(t))$.
Then, by Corollary~\ref{c.interpolation}, we get
\begin{align}\label{e.|Phi_L-Phi_L'|<}
\begin{split}
    \Ll|\Phi_{\mu,L,\alpha}(t,x)-\Phi_{\mu,L',\alpha'}(t,x)\Rr|\leq \Ll|L(t)-L'(t)\Rr| + \int_0^1 \Ll|\pi_t(s)-\pi'_t(s)\Rr|\d s
    \\
    \leq 3\Ll\|L-L'\Rr\|_{L^\infty} + \Ll\|L\Rr\|_\mathrm{Lip}\Ll\|\alpha-\alpha'\Rr\|_{L^1},
\end{split}
\end{align}
where $L$ and $L'$ are smooth and $\alpha,\alpha'\in \mcl M_\d$. In the last inequality, we also used $\Ll\|\alpha^{-1}-\alpha'^{-1}\Rr\|_{L^1} = \Ll\|\alpha-\alpha'\Rr\|_{L^1}$ allowed by Fubini's theorem.

Now, let $L$ be a general Lipschitz increasing function and $\alpha$ be a general p.d.f. Given approximating sequences $(L_n)_{n\in\N}$ and $(\alpha_n)_{n\in\N}$, we can use the above estimate to see that $\Phi_{\mu,L_n,\alpha_n}$ converges uniformly to some limit $\Phi_{\mu,L,\alpha}$ in the space of $C([0,\T]\times \R^\D;\R)$. The above estimate also implies the independence of choice of approximating sequences.
\end{proof}

\begin{lemma}[Cascade representation of the solution]\label{l.cascade_rep}
Let $\Phi_{\mu,L,\alpha}$ be a Parisi PDE solution. For every $(t,x)\in [0,\T]\times \R^\D$, we have
\begin{align}\label{e.Phi=Ecascade}
    \Phi_{\mu,L,\alpha}(t,x) = \E \log \iint \exp\Ll(\sqrt{2}\sigma\cdot w^{\pi_t}(\brho)+\sigma\cdot x - \sigma\sigma^\intercal\cdot(\pi_t(1)+L(t))\Rr)\d \mu(\sigma)\d \mathfrak{R}(\brho)
\end{align}
where $\pi_t = L\circ \alpha^{-1}_{[t}(\cdot) - L(t)$ and $\alpha_{[t}=\alpha \one_{[t,\T]}$. In other words, we have $\Phi_{\mu,L,\alpha}(t,x) =\mathbf{f}_\mu(\pi_t,x,-L(t))$ for every $(t,x)\in[0,\T]\times\R^\D$, where $\mathbf{f}_\mu$ is given as in~\eqref{e.f(q),<>_q=}.
\end{lemma}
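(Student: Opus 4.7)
The plan is to derive the identity by approximation from the smooth-and-discrete case, where it already holds. Indeed, when $L$ is smooth and $\alpha \in \mcl M_\d$, the statement $\Phi_{\mu,L,\alpha}(t,x) = \mathbf{f}_\mu(\pi_t, x, -L(t))$ is exactly~\eqref{e.Phi=f_discrete}, obtained from~\eqref{e.Phi=Ecascade_discrete} by noting that $\alpha_{[t} \in \mcl M_\d$, so $\alpha_{[t}^{-1}(1) = T$ by~\eqref{e.alpha^-1=T_discrete} and hence $\pi_t(1) + L(t) = L(T)$.

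For general Lipschitz increasing $L$ and $\alpha \in \mcl M$, take an approximating sequence $(L_n, \alpha_n)_{n \in \N}$ of the kind constructed in the proof of well-definedness of Definition~\ref{d.sol_pde}: each $L_n$ is smooth and increasing with $\|L_n\|_{\mathrm{Lip}} \le \|L\|_{\mathrm{Lip}}$ and $L_n \to L$ uniformly on $[0,T]$, while each $\alpha_n \in \mcl M_\d$ satisfies $\|\alpha_n - \alpha\|_{L^1} \to 0$. The defining approximation for $\Phi_{\mu,L,\alpha}$ then gives $\Phi_{\mu,L_n,\alpha_n}(t,x) \to \Phi_{\mu,L,\alpha}(t,x)$, and the discrete case yields $\Phi_{\mu,L_n,\alpha_n}(t,x) = \mathbf{f}_\mu(\pi_t^n, x, -L_n(t))$, where $\pi_t^n := L_n \circ \alpha_{n,[t}^{-1} - L_n(t)$. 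The remaining task is to show $\mathbf{f}_\mu(\pi_t^n, x, -L_n(t)) \to \mathbf{f}_\mu(\pi_t, x, -L(t))$.

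This last convergence is a consequence of Corollary~\ref{c.interpolation}, which reduces it to verifying $L_n(t) \to L(t)$ (immediate from uniform convergence) and $\|\pi_t^n - \pi_t\|_{L^1([0,1])} \to 0$. The three-term decomposition
$$\pi_t^n(s) - \pi_t(s) = (L_n - L)\bigl(\alpha_{n,[t}^{-1}(s)\bigr) + \bigl[L\bigl(\alpha_{n,[t}^{-1}(s)\bigr) - L\bigl(\alpha_{[t}^{-1}(s)\bigr)\bigr] + \bigl(L(t) - L_n(t)\bigr),$$
together with the Lipschitz bound on $L$ and integration in $s$, gives
$$\|\pi_t^n - \pi_t\|_{L^1} \le 3\|L_n - L\|_\infty + \|L\|_{\mathrm{Lip}} \, \|\alpha_{n,[t}^{-1} - \alpha_{[t}^{-1}\|_{L^1([0,1])}.$$
Because $\alpha_{n,[t}$ and $\alpha_{[t}$ remain cumulative distribution functions of probability measures on $[0,T]$ (they carry an added point mass at the left endpoint $t$ but reach $1$ at $T$), the usual Fubini identity $\|\beta^{-1} - \beta'^{-1}\|_{L^1([0,1])} = \|\beta - \beta'\|_{L^1([0,T])}$ applies and yields $\|\alpha_{n,[t}^{-1} - \alpha_{[t}^{-1}\|_{L^1} = \|\alpha_{n,[t} - \alpha_{[t}\|_{L^1} \le \|\alpha_n - \alpha\|_{L^1} \to 0$.

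The only point that merits some care, more a bookkeeping subtlety than a genuine obstacle, concerns the right endpoint $s = 1$: for each $n$ one has $\pi_t^n(1) = L_n(T) - L_n(t)$ because $\alpha_{n,[t}^{-1}(1) = T$, while the limit satisfies $\pi_t(1) = L\bigl(\alpha_{[t}^{-1}(1)\bigr) - L(t)$ and $\alpha_{[t}^{-1}(1)$ can be strictly less than $T$. Since Corollary~\ref{c.interpolation} controls paths only in $L^1$, and since the convention $q(1) = \lim_{s \to 1} q(s)$ of~\eqref{e.q(1)=} is intrinsic to $q$, this single-point discrepancy is invisible to the estimate and the formula emerges in the limit with the intended reading $\pi_t(1) + L(t) = L\bigl(\alpha_{[t}^{-1}(1)\bigr)$ inside $\mathbf{f}_\mu$.
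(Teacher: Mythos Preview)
Your proof is correct and follows essentially the same approach as the paper: approximate by smooth $L_n$ and discrete $\alpha_n$, invoke the discrete-case representation~\eqref{e.Phi=f_discrete}, and pass to the limit via Corollary~\ref{c.interpolation}, with the $L^1$ control on $\pi_t^n - \pi_t$ obtained exactly as in the derivation of~\eqref{e.|Phi_L-Phi_L'|<}. Your explicit discussion of the endpoint $s=1$ is a welcome clarification; the reason the potential mismatch $\pi_t^n(1)\to L(T)-L(t)\neq \pi_t(1)$ is harmless is precisely that in the interpolation of Lemma~\ref{l.interpolation} the self-overlap contribution $\sigma\sigma^\intercal\cdot(q-q')(1)$ from Gaussian integration by parts cancels against the explicit $-(q-q')(1)$ term, so the bound in Corollary~\ref{c.interpolation} involves only $|z-z'|$ and $\|q-q'\|_{L^1}$ and never $q(1)$ separately.
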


Given this lemma, we can directly define the Parisi PDE solution via~\eqref{e.Phi=Ecascade}. Here, $\pi_t$ is a left-continuous path with right limits but results in Section~\ref{s.cascade} are stated for right-continuous paths. This is not an issue due to Remark~\ref{r.left-cts}.

\begin{proof}We approximate $\Phi_{\mu,L,\alpha}$ as described in Definition~\ref{d.sol_pde}.
Then, this lemma follows from the representation \eqref{e.Phi=f_discrete} for approximations and Corollary~\ref{c.interpolation} (to derive a similar estimate as in~\eqref{e.|Phi_L-Phi_L'|<} to bound the discrepancy between the right-hand side of~\eqref{e.Phi=Ecascade} and its approximations).
\end{proof}

\begin{remark}[Extension of Parisi PDE]
We can define $\Phi_{\mu,L,\alpha}$ on $\R_+\times \R^\D$ through~\eqref{e.Phi=Ecascade}. Using the observation that $\alpha^{-1}_{[t}(1)=t$ and thus $\pi_t=0$ for every $t\geq \alpha^{-1}(1)$, we can see that
\begin{align*}
    \Phi_{\mu,L,\alpha}(t,x) = \log \int\exp\Ll(\sigma\cdot x - \sigma\sigma^\intercal\cdot L(t) \Rr) \d \mu(\sigma),\quad\forall (t,x)\in[\alpha^{-1}(1),\infty)\times \R^\D,
\end{align*}
which trivially satisfies the equation~\eqref{e.Parisi_PDE} on $[\alpha^{-1}(1),\infty)\times \R^\D$ and the condition in~\eqref{e.phi=initial} at $t=T>\alpha^{-1}(1)$.
\end{remark}

Let $\mathscr{I} = \cup_{k\in\{0\}\cup \N} \{1,\dots,D\}^k$ be the set of multi-indices.
For $\bi = (i_1,i_2,\dots,i_k)\in \{1,\dots,D\}^k$ for some $k\in\N$, we set $|\bi|=k$ and $\partial^\bi_x = \partial_{x_{i_1}}\partial_{x_{i_2}}\cdots \partial_{x_{i_k}}$. We set $\partial^\emptyset$ to be the identity operator. For $k\in\N$, we write $\nabla^k = \Ll(\partial^\bi_x\Rr)_{\bi:\:|\bi|=k}$.

\begin{proposition}[Regularity of the solution]\label{p.reg_Phi}
Let $\Phi = \Phi_{\mu,L,\alpha}$ be any Parisi PDE solution.
The following holds:
\begin{enumerate}
    \item \label{i.d_xPhi_exists} For every $\bi\in\mathscr{I}$, the real-valued function $\partial^\bi_x\Phi$ exists and is continuous on $[0,\T]\times \R^\D$.
    \item \label{i.dxPhi_cvg} For every $\bi\in\mathscr{I}$ (including $\bi=\emptyset$), there is a constant $C_\bi>0$ such that for any two Parisi PDE solutions $\Phi_{\mu,L,\alpha}$ and $\Phi_{\mu,L',\alpha'}$, it holds that
    \begin{align*}
        \sup_{(t,x)\in [0,T]\times \R^\D}\Ll|\partial_x^\bi\Phi_{\mu,L,\alpha}(t,x) - \partial_x^\bi\Phi_{\mu,L',\alpha'}(t,x)\Rr|\leq C_\bi\Ll(3 \Ll\|L-L'\Rr\|_{L^\infty} + \Ll\|L\Rr\|_\mathrm{Lip}\Ll\|\alpha-\alpha'\Rr\|_{L^1} \Rr).
    \end{align*}
    \item \label{i.sup|d_xPhi|<infty} For every $\bi\in\mathscr{I}$ with $|\bi|\geq 1$, 
    \begin{align*}
        \sup_{(L,\alpha)}\sup_{[0,\T]\times \R^\D}\Ll|\partial^\bi_x\Phi\Rr|<\infty. \end{align*}
    In particular, we have $|\nabla \Phi|\leq \sup_{\sigma \in\supp\mu}|\sigma|$ everywhere on $[0,T]\times \R^D$.
    \item \label{i.dsdxPhi_exists} 
    For every $\bi\in\mathscr{I}$, $\partial_t \partial^\bi_x\Phi$ exists and is bounded almost everywhere on $[0,T]\times \R^\D$ and thus $\Phi$ satisfies the Parisi PDE~\eqref{e.Parisi_PDE} almost everywhere. 
    
    In particular, letting $\mathsf{disc}_\alpha$ be the set of discontinuity points of $\alpha$, if $L$ is continuously differentiable, then $\partial_t\partial^\bi_x\Phi$ exists and is continuous on $\Ll([0,\T]\setminus \mathsf{disc}_\alpha\Rr)\times \R^\D$ for every $\bi\in\mathscr{I}$ and $\Phi$ satisfies~\eqref{e.Parisi_PDE} everywhere on $\Ll([0,\T]\setminus \mathsf{disc}_\alpha\Rr)\times \R^\D$.
\end{enumerate}
\end{proposition}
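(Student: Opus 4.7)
The plan is to read everything off the cascade representation $\Phi_{\mu,L,\alpha}(t,x) = \mathbf{f}_\mu(\pi_t, x, -L(t))$ from Lemma~\ref{l.cascade_rep}, using the stability estimates in Corollary~\ref{c.interpolation}. Because $\supp \mu$ is compact, the integrand defining $\mathbf{f}_\mu$ is $C^\infty$ in $x$ with $x$-derivatives of every order polynomially bounded in $|\sigma|$, which justifies interchanging $x$-derivatives with the integration against $\mu \otimes \mathfrak{R}$ and the expectation $\E$.

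For parts~\eqref{i.d_xPhi_exists} and~\eqref{i.sup|d_xPhi|<infty}, this interchange yields an explicit expression of $\partial_x^\bi \Phi_{\mu,L,\alpha}(t,x)$ as the standard cumulant-via-replica combination of Gibbs averages of polynomials in independent replicas $\sigma^1, \sigma^2, \ldots$ under $\la \cdot \ra_{\mu, \pi_t, x, -L(t)}$. Since each such average is uniformly bounded by a constant depending only on $|\bi|$ and $\supp \mu$, part~\eqref{i.sup|d_xPhi|<infty} follows; in particular $\partial_{x_i} \Phi = \la \sigma_i \ra$ delivers $|\nabla \Phi| \le \sup_{\sigma \in \supp \mu} |\sigma|$. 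Continuity in $x$ is immediate from these formulas, while continuity in $t$ follows by noting that $L$ is continuous and $t \mapsto \pi_t = L \circ \alpha^{-1}_{[t} - L(t)$ is continuous into $L^1([0,1]; S^D)$, then applying the second inequality in Corollary~\ref{c.interpolation} with $\mathbf{F}$ equal to the bounded polynomial appearing in the cumulant expansion.

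For part~\eqref{i.dxPhi_cvg}, the case $\bi = \emptyset$ was already proved inside the proof that $\Phi_{\mu,L,\alpha}$ is well-defined. For $|\bi| \ge 1$, write $\partial_x^\bi \Phi_{\mu,L,\alpha}(t,x) = \E \la F_\bi \ra_{\mu, \pi_t, x, -L(t)}$ with $F_\bi$ the bounded polynomial in the replicated spins coming from the cumulant expansion, and apply the second inequality in Corollary~\ref{c.interpolation} (which accommodates any finite number of replicas) to $\mathbf{F} = F_\bi$; the constant $C_\bi$ is then $\|F_\bi\|_{L^\infty}$ times a purely combinatorial factor, combined with the elementary bound $\|\pi_t - \pi'_t\|_{L^1} \le 2\|L - L'\|_{L^\infty} + \|L\|_{\mathrm{Lip}}\|\alpha - \alpha'\|_{L^1}$.

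For part~\eqref{i.dsdxPhi_exists}, I would start from Lemma~\ref{l.PDE}, which gives the Parisi PDE classically for $(L, \alpha)$ smooth and discrete, and observe that differentiating the PDE in $x$ preserves its form (the nonlinear term produces further polynomial Gibbs averages that remain uniformly bounded). Approximate a general $(L, \alpha)$ by the sequence $(L_n, \alpha_n)$ constructed in the proof surrounding~\eqref{e.L_n=}: part~\eqref{i.dxPhi_cvg} then gives uniform convergence of all spatial derivatives, and the PDE itself dictates the $t$-derivatives, so $\partial_t \partial_x^\bi \Phi_{\mu,L,\alpha}$ exists almost everywhere with the claimed bounds and~\eqref{e.Parisi_PDE} holds almost everywhere. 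For the ``in particular'' clause, if $L$ is $C^1$, I would refine the mollification so that $\dot L_n \to \dot L$ uniformly on $[0,T]$; continuity of the limiting spatial derivatives off $\mathsf{disc}_\alpha$ then upgrades the a.e.\ identity to pointwise identity on $([0,T] \setminus \mathsf{disc}_\alpha) \times \R^D$. The step I expect to be the most delicate is precisely this upgrade: verifying that $\partial_t \partial_x^\bi \Phi_{L_n,\alpha_n}$ converges at every $(t,x)$ with $t \notin \mathsf{disc}_\alpha$, not just almost everywhere, requires using the PDE to transfer the pointwise control already established for spatial derivatives in part~\eqref{i.sup|d_xPhi|<infty} to the time derivative at each such $t$.
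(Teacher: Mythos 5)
Your proposal is correct and follows essentially the same route as the paper: parts (1)--(3) via the cascade representation of Lemma~\ref{l.cascade_rep}, the replica formulas $\partial_x^\bi\Phi=\E\la F_\bi\ra_{\mu,\pi_t,x,-L(t)}$, and the stability estimates of Corollary~\ref{c.interpolation}; part (4) via smooth/discrete approximation and Lemma~\ref{l.PDE}. The only remark worth making is that the ``delicate step'' you flag at the end is sidestepped in the paper by integrating the approximants' equation in time over $[t,\T]$ and passing to the limit in the resulting integral identity (using parts (2)--(3) and bounded convergence), after which a.e.\ differentiability of $\Phi$ in $t$, and pointwise differentiability off $\mathsf{disc}_\alpha$ when $L\in C^1$, follow from the (one-sided) continuity of the integrand without ever needing convergence of $\partial_t\partial_x^\bi\Phi_{L_n,\alpha_n}$ itself.
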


The set $\mathsf{disc}_\alpha$ is countable due to the monotonicity of $\alpha$. When $L$ is continuously differentiable and $\alpha$ is continuous, the last part implies that $\Phi$ is a classical solution.

\begin{proof}
Using the representation in Lemma~\ref{l.cascade_rep} we can inductively verify that, for every $(t,x)$ and every $\bi$ with $|\bi| \ge 1$, there is a polynomial $F_\bi : \Ll(\R^\D\Rr)^{|\bi|} \to \R$ independent of $(L,\alpha)$ such that
\begin{align}\label{e.d_xPhi=E<F>}
    \partial_x^\bi\Phi(t,x) = \E \la F_\bi \Ll(\sigma^1,\sigma^2,\dots \sigma^{|\bi|}\Rr) \ra_{\mu,\pi_t,x,-L(t)}
\end{align}
where $\la\cdot \ra_{\mu,\pi_t,x,-L(t)}$ is the Gibbs measure given as in~\eqref{e.f(q),<>_q=} and each $\sigma^l$ is an independent copy of $\sigma$ under $\la\cdot \ra_{\mu,\pi_t,x,-L(t)}$. This along with the assumption on the support of $\mu$ gives the main display in Part~\eqref{i.sup|d_xPhi|<infty}. 
Using the representation in Lemma~\ref{l.cascade_rep}, we can compute $\nabla\Phi(t,x) = \E \la \sigma\ra_{\mu,\pi_t,x,-L(t)}$, which implies the particular case in Part~\eqref{i.sup|d_xPhi|<infty}.

We consider the presentation of $\Phi$ in Lemma~\ref{l.cascade_rep} in terms of $\mathbf{f}_\mu$ from~\eqref{e.f(q),<>_q=}. Then,~\eqref{e.d_xPhi=E<F>} is of the form considered in Lemma~\ref{l.interpolation} and Corollary~\ref{c.interpolation}.
Applying Corollary~\ref{c.interpolation}, we have that there is a constant $C_\bi$ such that, for every $(t,x),(t',x')\in[0,\T]\times \R^\D$,
\begin{align*}
    \Ll|\partial_x^\bi\Phi(t,x) - \partial_x^\bi\Phi(t',x')\Rr|\leq C_\bi \Ll(\Ll|L(t)-L(t')\Rr| + \int_0^1\Ll|\pi_t(s)-\pi_{t'}(s)\Rr|\d s+|x-x'|\Rr)
\end{align*}
where $\pi_t$ and $\pi_{t'}$ are given as in Lemma~\ref{l.cascade_rep}. From this, we can verify Part~\eqref{i.d_xPhi_exists}.

We consider the representations for $\Phi_{\mu,L,\alpha}$ and $\Phi_{\mu,L',\alpha'}$ in Lemma~\ref{l.cascade_rep} (with $\pi'_t$ defined similarly for the latter).
Then, we can represent their derivatives in the same way as in~\eqref{e.d_xPhi=E<F>}. We apply Corollary~\ref{c.interpolation} to get a constant $C'_\bi$ such that, for every $(t,x)\in[0,\T]\times \R^\D$,
\begin{align*}
    \Ll|\partial_x^\bi\Phi_{\mu,L,\alpha}(t,x) - \partial_x^\bi\Phi_{\mu,L',\alpha'}(t,x)\Rr| \leq C'_\bi \Ll(\Ll|L(t)-L'(t)\Rr| + \int_0^1\Ll|\pi_t(s)-\pi'_{t}(s)\Rr|\d s\Rr)
    \\
    \leq C'_\bi\Ll(3 \Ll\|L_n-L\Rr\|_{L^\infty} + \Ll\|L\Rr\|_\mathrm{Lip}\Ll\|\alpha-\alpha'\Rr\|_{L^1}  \Rr).
\end{align*}
where the last inequality can deduced in the same way as in~\eqref{e.|Phi_L-Phi_L'|<}. This implies Part~\eqref{i.dxPhi_cvg}.

Lastly, we work under the assumptions in Part~\eqref{i.sup|d_xPhi|<infty} and take an approximation sequence $((L_n,\alpha_n))_{n\in\N}$ of $(L,\alpha)$ consisting of smooth $L_n$ and discrete $\alpha_n$. 
In particular, we can take $L_n$ as in~\eqref{e.L_n=} to ensure that $\Ll(\dot L_n\Rr)_{n\in\N}$ converges to $\dot L$ almost everywhere on $[0,\T]$.
By Lemma~\ref{l.PDE}, $\Phi_n$ satisfies the equation except possibly at finitely many $t$.
Fix any $t\in[0,\T]$, we can integrate the equation for $\Phi_n$ over $[t,\T]$ and then pass to the limit as $n\to\infty$. Parts~\eqref{i.dxPhi_cvg} and~\eqref{i.sup|d_xPhi|<infty} together with the bounded convergence theorem imply, for every $x\in\R^\D$,
\begin{align*}
    \phi(x)-\Phi(t,x)=-\int_t^\T\la \dot L(r)\,,\,\nabla^2\Phi(r,x)+\alpha(r)\nabla \Phi\nabla \Phi)^\intercal(r,x)\ra_{\S^D}\d r.
\end{align*}
Therefore, $\partial_t \Phi$ exists almost everywhere and $\Phi$ satisfies the equation almost everywhere. Under the additional assumption that $L$ is continuously differentiable, using the continuity in Part~\eqref{i.d_xPhi_exists}, we can see that the integrand on the right is continuous at every $r \in[0,T]\setminus\mathsf{disc}_\alpha$. Then, we can obtain the additional claim in Part~\eqref{i.dsdxPhi_exists} for $\partial_t \Phi$ and $\Phi$.
This proves Part~\eqref{i.dsdxPhi_exists} in the case $|\bi|=0$.
For the general case, we can first apply the derivative operator $\partial^\bi_x$ on the equation satisfied by $\Phi_n$ and then repeat the above argument to conclude.
\end{proof}

\begin{lemma}[Time derivative of the solution]\label{l.time_der_sol}
Let $(L,\alpha)$ be a decomposition defined on $[0,\T]$ satisfying that $L$ is smooth and $\alpha$ is continuous and strictly increasing. Let $\Phi=\Phi_{\mu,L,\alpha}$ be the associated Parisi PDE solution. Then, $\Phi$ is differentiable in $t$ at every $t\in[0,\T]\times \R^\D$ with
\begin{align*}
	\partial_t \Phi(t,x) = - \dot L(t)\cdot \Ll(\E \la \sigma\sigma^\intercal\ra_{\mu,\pi_t,x,-L(t)} - \E \la \sigma\sigma'^\intercal \one_{\brho\wedge \brho' > \alpha(t)}\ra_{\mu,\pi_t,x,-L(t)}\Rr)
\end{align*}
where $\la\cdot\ra_{\mu,\pi_t,x,-L(t)}$ is given as in~\eqref{e.f(q),<>_q=} with $\pi_t$ from Lemma~\ref{l.cascade_rep}.
\end{lemma}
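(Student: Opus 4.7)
The plan is to differentiate the cascade representation $\Phi(t,x)=\mathbf{f}_\mu(\pi_t,x,-L(t))$ from Lemma~\ref{l.cascade_rep} in $t$, by applying the interpolation identity~\eqref{e.f(q)-f(q')=int} to the difference quotient and passing to the limit. The continuity and strict monotonicity of $\alpha$ are used to get a clean explicit form for $\pi_t$, while the smoothness of $L$ supplies the derivative.

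First I would write $\pi_t$ explicitly. Since $\alpha$ is continuous and strictly increasing, the truncated p.d.f.\ $\alpha_{[t}=\alpha\one_{[t,T]}$ jumps from $0$ to $\alpha(t)$ at $t$, so its quantile function satisfies $\alpha^{-1}_{[t}(s)=t$ for $s\in(0,\alpha(t)]$ and $\alpha^{-1}_{[t}(s)=\alpha^{-1}(s)$ for $s\in(\alpha(t),1]$. Consequently $\pi_t(s)=0$ for $s\in(0,\alpha(t)]$ and $\pi_t(s)=L(\alpha^{-1}(s))-L(t)$ for $s\in(\alpha(t),1]$.

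Next, fix $t\in[0,T]$ and let $h$ be small with $t'=t+h\in[0,T]$. Applying~\eqref{e.f(q)-f(q')=int} with $x=x'$, $z=-L(t)$, $z'=-L(t')$, $q=\pi_t$, $q'=\pi_{t'}$, and writing $\pi^\lambda=\lambda\pi_t+(1-\lambda)\pi_{t'}$, $z^\lambda=\lambda(-L(t))+(1-\lambda)(-L(t'))$, yields
\begin{align*}
    \Phi(t,x)-\Phi(t',x)=\int_0^1\E\la\sigma\sigma^\intercal\cdot(L(t')-L(t))-\sigma\sigma'^\intercal\cdot(\pi_t-\pi_{t'})(\brho\wedge\brho')\ra_{\mu,\pi^\lambda,x,z^\lambda}\d\lambda.
\end{align*}
Assuming $h>0$ (the case $h<0$ is symmetric), the explicit form of $\pi_t$ splits $\pi_t-\pi_{t'}$ into three pieces: it vanishes on $(0,\alpha(t)]$; on the transition interval $(\alpha(t),\alpha(t')]$ it equals $L(\alpha^{-1}(s))-L(t)$ and is therefore bounded by $\|L\|_\mathrm{Lip}\,h$ in norm while the interval itself has length $\alpha(t')-\alpha(t)$; and on $(\alpha(t'),1]$ it equals $L(t')-L(t)=h\dot L(t)+o(h)$. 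Divided by $h$, the middle piece contributes at most $\|L\|_\mathrm{Lip}(\alpha(t')-\alpha(t))\to 0$ in $L^1(\d s)$ by continuity of $\alpha$, while on $(\alpha(t'),1]$ the ratio converges to $\dot L(t)$ and is uniformly bounded by $\|L\|_\mathrm{Lip}$. Meanwhile $(L(t')-L(t))/h\to\dot L(t)$ because $L$ is smooth.

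Finally I would pass to the limit. Since $\|\pi^\lambda-\pi_t\|_{L^1}\to 0$ and $|z^\lambda+L(t)|\to 0$ uniformly in $\lambda\in[0,1]$, Corollary~\ref{c.interpolation} gives that $\E\la F\ra_{\mu,\pi^\lambda,x,z^\lambda}\to\E\la F\ra_{\mu,\pi_t,x,-L(t)}$ uniformly in $\lambda$ for every bounded measurable $F$. Combining this with the uniform bounds above and dominated convergence (recalling that $\sigma$ is bounded since $\supp\mu$ is compact), dividing by $h$ and letting $h\to 0$ gives
\begin{align*}
    -\partial_t\Phi(t,x)=\dot L(t)\cdot\E\la\sigma\sigma^\intercal\ra_{\mu,\pi_t,x,-L(t)}-\dot L(t)\cdot\E\la\sigma\sigma'^\intercal\one_{\brho\wedge\brho'>\alpha(t)}\ra_{\mu,\pi_t,x,-L(t)},
\end{align*}
which is the claimed identity (using Lemma~\ref{l.invar_cascade} to see that the event $\{\brho\wedge\brho'=\alpha(t)\}$ has zero mass, so that the choice of strict versus non-strict inequality is immaterial). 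The main obstacle is controlling the contribution of the transition interval $(\alpha(t),\alpha(t')]$ as $h\to 0$ and justifying the passage to the limit of the interpolated Gibbs expectations; both are handled by the Lipschitz estimates of Corollary~\ref{c.interpolation} together with the continuity of $\alpha$.
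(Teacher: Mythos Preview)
Your proof is correct and follows essentially the same approach as the paper: both differentiate the cascade representation $\Phi(t,x)=\mathbf{f}_\mu(\pi_t,x,-L(t))$ via the interpolation identity~\eqref{e.f(q)-f(q')=int}, bound the contribution from the transition window $(\alpha(t),\alpha(t+h)]$ using the invariance of cascades, and pass to the limit in the Gibbs measure using Corollary~\ref{c.interpolation}. The only organizational difference is that the paper inserts an intermediate path $\pi^\eps_t=\pi_t\one_{[\alpha(t+\eps),1]}$ and splits the difference quotient into two separate comparisons (one isolating the transition window, the other yielding exactly $-(L(t+\eps)-L(t))\one_{(s_\eps,1]}$), whereas you carry out a single interpolation and handle the piecewise structure of $\pi_t-\pi_{t'}$ directly inside it; the content is the same.
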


\begin{proof}
We only compute the one-sided derivative in $t$ from the right; this is sufficient by Part~\eqref{i.dsdxPhi_exists} of Proposition~\ref{p.reg_Phi}.  Fix any $(t,x)\in[0,\T)\times \R^\D$. We set $s=\alpha(t)$ and $s_\eps = \alpha(t+\eps)$ for every $\eps\in(0,\T-t)$.
Since $\alpha^{-1}$ is strictly increasing, we also have $t=\alpha^{-1}(s)$ and $t+\eps = \alpha^{-1}(s_\eps)$.
We also set $\pi^\eps_t = \pi_t \one_{[s_\eps,1]}$. 

Lemma~\ref{l.cascade_rep} gives that $\Phi(t+\eps,x) = \mathbf{f}_\mu(\pi_{t+\eps},x,-L(t+\eps))$ with $\mathbf{f}_\mu$ given as in~\eqref{e.f(q),<>_q=}. To compute $\partial_t \Phi(t,x)$, we evaluate the limit of $\frac{1}{\eps}\Ll(\mathbf{f}_\mu(\pi_{t+\eps},x,-L(t+\eps)) - \mathbf{f}_\mu(\pi_t,x,-L(t))\Rr)$, which we split into two parts $\mathbf{f}_\mu(\pi_{t+\eps},x,-L(t+\eps))-\mathbf{f}_\mu(\pi^\eps_t,x,-L(t))$ and $\mathbf{f}_\mu(\pi^\eps_t,x,-L(t))-\mathbf{f}_\mu(\pi_t,x,-L(t))$.

We start by treating the second part.
Notice that $\pi^\eps_t$ differs from $\pi_t$ only on the interval $[s,s_\eps]$, on which the difference is bounded by $\|L\|_\mathrm{Lip}\Ll(\alpha^{-1}(s_\eps)-\alpha^{-1}(s)\Rr) = \|L\|_\mathrm{Lip}(t+\eps-t) $. Hence, $\|\pi^\eps_t - \pi_t\|_{L^1}\leq (s_\eps-s)\|L\|_\mathrm{Lip}\eps$. These along with Corollary~\ref{c.interpolation} implies
\begin{align*}\frac{1}{\eps}\Ll|\mathbf{f}_\mu(\pi^\eps_t,x,-L(t)) - \mathbf{f}_\mu(\pi_t,x,-L(t))\Rr| \leq \frac{1}{\eps} \|\pi^\eps_t - \pi_t\|_{L^1}
	\leq (s_\eps-s)\|L\|_\mathrm{Lip}
\end{align*}
which vanishes as $\eps\to0$.

Then, we turn to the first part. 
Comparing $\pi_{t+\eps}$ as in Lemma~\ref{l.cascade_rep} with $\pi^\eps_t$, we can see $\pi_{t+\eps} - \pi^\eps_t = -(L(t+\eps)-L(t))\one_{(s_\eps,1]}$. 
Using this and Lemma~\ref{l.interpolation}, we get
\begin{align*}
    &\frac{1}{\eps}\Ll(\mathbf{f}_\mu(\pi_{t+\eps},x,-L(t+\eps)) - \mathbf{f}_\mu(\pi^\eps_t,x,-L(t))\Rr) 
    \\
    &=-\frac{1}{\eps}(L(t+\eps)-L(t))\cdot \int_0^1 \E \la \sigma\sigma^\intercal - \sigma\sigma'^\intercal \one_{\brho\wedge\brho'> s_\eps} \ra_{\mu,\,\lambda \pi_{t+\eps} + (1-\lambda)\pi^\eps_t,\,x,\, -\lambda L(t+\eps)-(1-\lambda)L(t)} \d \lambda.
\end{align*}
The integrand converges to $\E \la \sigma\sigma^\intercal - \sigma\sigma'^\intercal \one_{\brho\wedge\brho'> s} \ra_{\mu,\pi_t,x,-L(t)}$ as $\eps\to0$ (to see this, one can use Corollary~\ref{c.interpolation}). This along with that the first part vanishes as argued above yields the desired result.
\end{proof}

We record an interesting observation below, even though we will not need it.

\begin{corollary}
Under the same setup of Lemma~\ref{l.time_der_sol}, it holds for every $(t,x)\in[0,\T]\times \R^\D$ that
\begin{align*}
	\dot L(t)\cdot \E \la \sigma\sigma'^\intercal \one_{\brho\wedge\brho'\leq \alpha(t)}\ra_{\mu,\pi_t,x,-L(t)} = \dot L(t)\cdot \E\la \sigma\ra_{\mu,\pi_t,x,-L(t)} \E \la \sigma\ra^\intercal_{\mu,\pi_t,x,-L(t)} \alpha(t).
\end{align*}
\end{corollary}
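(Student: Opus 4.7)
The plan is to equate two distinct expressions for $\partial_t \Phi(t,x)$: the one furnished by Lemma~\ref{l.time_der_sol}, and the one coming from the Parisi PDE~\eqref{e.Parisi_PDE} itself. The assumptions that $L$ is smooth and $\alpha$ is continuous imply $\mathsf{disc}_\alpha = \emptyset$, so Proposition~\ref{p.reg_Phi}\eqref{i.dsdxPhi_exists} guarantees that $\Phi$ satisfies the PDE classically at every $(t,x) \in [0,T] \times \R^D$.

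First, I would identify $\nabla \Phi$ and $\nabla^2 \Phi$ in terms of Gibbs expectations. Starting from the cascade representation $\Phi(t,x) = \mathbf{f}_\mu(\pi_t, x, -L(t))$ of Lemma~\ref{l.cascade_rep}, and noting that $x$ appears in the exponent only through the term $\sigma \cdot x$, direct differentiation gives
\begin{align*}
\nabla \Phi(t,x) = \E \la \sigma \ra, \qquad \nabla^2 \Phi(t,x) = \E \la \sigma \sigma^\intercal \ra - \E\bigl[\la \sigma \ra \la \sigma \ra^\intercal\bigr],
\end{align*}
where $\la \cdot \ra = \la \cdot \ra_{\mu,\pi_t,x,-L(t)}$. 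Since $\sigma$ and $\sigma'$ are independent samples under the (still random) Gibbs measure, $\la \sigma \ra \la \sigma \ra^\intercal = \la \sigma \sigma'^\intercal \ra$ before taking $\E$, so $\nabla^2 \Phi(t,x) = \E \la \sigma \sigma^\intercal \ra - \E \la \sigma \sigma'^\intercal \ra$. On the other hand, $\nabla \Phi \nabla \Phi^\intercal(t,x) = \E \la \sigma \ra \E \la \sigma \ra^\intercal$, which is not the same object, the gap being exactly the cascade-induced correlation.

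Second, plugging these into~\eqref{e.Parisi_PDE} yields
\begin{align*}
\partial_t \Phi(t,x) = -\dot L(t) \cdot \Bigl( \E \la \sigma \sigma^\intercal \ra - \E \la \sigma \sigma'^\intercal \ra + \alpha(t)\, \E\la \sigma \ra \E\la \sigma \ra^\intercal \Bigr).
\end{align*}
Equating this with the Lemma~\ref{l.time_der_sol} expression
\begin{align*}
\partial_t \Phi(t,x) = -\dot L(t) \cdot \Bigl( \E \la \sigma \sigma^\intercal \ra - \E \la \sigma \sigma'^\intercal \one_{\brho \wedge \brho' > \alpha(t)} \ra \Bigr),
\end{align*}
cancelling $-\dot L(t) \cdot \E\la \sigma \sigma^\intercal \ra$ from both sides, and writing $\E \la \sigma\sigma'^\intercal \ra = \E \la \sigma\sigma'^\intercal \one_{\brho\wedge\brho'\leq \alpha(t)}\ra + \E \la \sigma\sigma'^\intercal \one_{\brho\wedge\brho'>\alpha(t)}\ra$, the $\one_{\brho\wedge\brho'>\alpha(t)}$ contributions cancel and rearrangement yields the claimed identity.

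No serious obstacle arises: the argument is a direct matching of coefficients of $\dot L(t)$. The only point requiring care is the Hessian computation, specifically the identification of $\E[\la \sigma \ra \la \sigma \ra^\intercal]$ with $\E\la \sigma \sigma'^\intercal \ra$ via the independent-replica trick applied to the random Gibbs measure, which is why the dot product with $\dot L(t)$ is all one obtains — the PDE is a scalar identity, not a matrix identity, so the conclusion is necessarily stated after contraction with $\dot L(t)$.
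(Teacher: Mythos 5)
Your proposal is correct and follows essentially the same route as the paper: compute $\nabla\Phi = \E\la\sigma\ra$ and $\nabla^2\Phi = \E\la\sigma\sigma^\intercal - \sigma\sigma'^\intercal\ra$ from the cascade representation, substitute into the Parisi PDE (valid classically here since $\alpha$ is continuous and $L$ smooth), and match against the expression of Lemma~\ref{l.time_der_sol}. The point you flag about distinguishing $\E[\la\sigma\ra\la\sigma\ra^\intercal] = \E\la\sigma\sigma'^\intercal\ra$ from $\E\la\sigma\ra\,\E\la\sigma\ra^\intercal$ is exactly the crux, and you handle it correctly.
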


\begin{proof}
For brevity, we write $\la\cdot\ra=\la\cdot\ra_{\mu,\pi_t,x,-L(t)}$.
By Proposition~\ref{p.reg_Phi}~\eqref{i.dsdxPhi_exists}, $\Phi$ satisfies~\eqref{e.Parisi_PDE} everywhere. Since we can easily compute using the representation in Lemma~\ref{l.cascade_rep} that
\begin{align*}
	\nabla\Phi(t,x) = \E \la \sigma\ra,\qquad \nabla^2 \Phi(t,x) = \E \la \sigma\sigma^\intercal - \sigma\sigma'^\intercal\ra,
\end{align*}
we can rewrite~\eqref{e.Parisi_PDE} into
\begin{align*}
	\partial_t \Phi(t,x) = - \dot L(t) \cdot \Ll(\E \la \sigma\sigma^\intercal - \sigma\sigma'^\intercal\ra + \alpha(t)\E \la \sigma\ra\E \la \sigma\ra^\intercal\Rr)
\end{align*}
Comparing this with the expression in Lemma~\ref{l.time_der_sol}, we get the result.
\end{proof}

\section{Proof of the main result}
\label{s.proof}

Recall that $P_1$ is the distribution of a single spin.
We use the following terminology in this section.
Given a decomposition $(L,\alpha)$ of some $q\in\mcl Q_\infty$ on $[0,\T]$, we say that $\Phi$ is \textbf{associated with $(L,\alpha)$} if $\Phi = \Phi_{P_1,L,\alpha}$ is the Parisi PDE solution given in Definition~\ref{d.sol_pde} with $\mu=P_1$.
We also note that 
\begin{align}\label{e.q(1)=aq(1)=Lalpha^-1(1)}
    q(1)=\aq(1)=L\circ\alpha^{-1}(1)
\end{align}
due to the definitions of $q(1)$ in~\eqref{e.q(1)=} and $\aq$ in~\eqref{e.aq=}.

We also need a stochastic process, the sample paths of which can be interpreted as characteristics of the Parisi PDE.
Fix a probability space on which there is a standard $\D$-dimensional Wiener process $W=(W_s)_{s\in [0,\T]}$, where $[0,\T]$ is the common domain for $L$ and $\alpha$.
Given the solution $\Phi$ associated with $(L,\alpha)$, we say that the process $X=(X_t)_{t\in[0,\T]}$ is \textbf{associated with $(L,\alpha)$} if $X$ is the strong solution (see \cite[Definition~2.1 in Chapter~5]{karatzas1991brownian}) of the following SDE
\begin{align}\label{e.X_t=}
\begin{split}
    \d X_t &= 2\alpha(t)\dot L(t) \nabla\Phi(t,X_t)\d t + \Ll(2\dot L\Rr)^\frac{1}{2}(t)\d W_t,
    \\
    X_0 & = x
\end{split}
\end{align}
for some $x\in\R^\D$ to be specified in different contexts. The existence and uniqueness of the strong solution follows from the regularity of $\nabla\Phi$ in Proposition~\ref{p.reg_Phi}, that $\dot L$ is bounded a.e., and the standard results~\cite[Theorems~2.5 and 2.9 in Chapter~5]{karatzas1991brownian}.

\subsection{Computation along characteristics}

Later, we need some approximation using more regular $(L,\alpha)$. For this, we need the following result.

\begin{lemma}[Continuity of $X$ in $(L,\alpha)$]\label{l.X_n_cvg}
For each $n$, let $\Phi_n$ and $X^n$ be associated with some decomposition $(L_n,\alpha_n)$ defined on $[0,\T]$ and let $X^n_0= x_n\in\R^\D$.
Assume that $(x_n)_{n\in\N}$ converges to $x\in\R^\D$, $(\alpha_n)_{n\in\N}$ converges to $\alpha$ in $\cM$, $\sup_{n\in\N}\Ll\|\dot L_n\Rr\|_{L^\infty}<\infty$, and that $\Ll(\dot L_n\Rr)_{n\in\N}$ converges to $L$ pointwise a.e.\ on $[0,\T]$. Let $\Phi$ and $X$ be associated with $(L,\alpha)$ and let $X_0=x$. Then, $\lim_{n\to\infty}\E \Ll[\sup_{t\in[0,\T]}\Ll|X^n_t-X_t\Rr|^p\Rr] =0$ for every $p\geq 1$.
\end{lemma}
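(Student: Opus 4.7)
The approach is a standard SDE stability argument: subtract the two equations, estimate the difference in $L^p$, and close the estimate via Grönwall's inequality. Writing
\[
X^n_t - X_t = (x_n - x) + \int_0^t b_n(s) \, \d s + \int_0^t \sigma_n(s) \, \d W_s,
\]
with $b_n(s) = 2\alpha_n(s)\dot L_n(s)\nabla\Phi_n(s,X^n_s) - 2\alpha(s)\dot L(s)\nabla\Phi(s,X_s)$ and $\sigma_n(s) = (2\dot L_n)^{1/2}(s) - (2\dot L)^{1/2}(s)$, the goal reduces to bounding each of the three terms in $\E\bigl[\sup_{s\le t}|\cdot|^p\bigr]$.

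The drift $b_n$ is decomposed into four pieces by telescoping: (i) $2\alpha_n\dot L_n[\nabla\Phi_n(s,X^n_s) - \nabla\Phi(s,X^n_s)]$, which vanishes uniformly in $(s,X^n_s)$ by Proposition~\ref{p.reg_Phi}\eqref{i.dxPhi_cvg} applied with $|\bi|=1$, combined with $\sup_n \|\dot L_n\|_{L^\infty} < \infty$ and $\|\alpha_n\|_{L^\infty} \le 1$; (ii) $2\alpha_n\dot L_n[\nabla\Phi(s,X^n_s) - \nabla\Phi(s,X_s)]$, which is bounded by $C|X^n_s - X_s|$ thanks to the uniform bound on $\nabla^2\Phi$ from Proposition~\ref{p.reg_Phi}\eqref{i.sup|d_xPhi|<infty}; (iii) $2(\alpha_n - \alpha)\dot L_n\nabla\Phi(s,X_s)$, whose $L^1(\d s)$ norm is bounded by $C\|\alpha_n - \alpha\|_{L^1}\to 0$, using the bound $|\nabla\Phi|\le \sup_{\sigma\in\supp P_1}|\sigma|$ from Proposition~\ref{p.reg_Phi}\eqref{i.sup|d_xPhi|<infty}; and (iv) $2\alpha(\dot L_n - \dot L)\nabla\Phi(s,X_s)$, whose $L^1(\d s)$ norm vanishes by dominated convergence, since $\dot L_n \to \dot L$ pointwise a.e.\ and is uniformly bounded.

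For the martingale term, continuity of the matrix square root on bounded subsets of $S^D_+$ (together with the uniform bound $\sup_n\|\dot L_n\|_{L^\infty}<\infty$ and pointwise a.e.\ convergence) gives $(2\dot L_n)^{1/2}\to(2\dot L)^{1/2}$ pointwise a.e., and the integrands are uniformly bounded; dominated convergence then yields $\int_0^T |\sigma_n(s)|^2\,\d s \to 0$. Burkholder--Davis--Gundy therefore gives
\[
\E\Bigl[\sup_{t\le T}\Bigl|\int_0^t \sigma_n(s)\,\d W_s\Bigr|^p\Bigr] \le C_p \Bigl(\int_0^T |\sigma_n(s)|^2\,\d s\Bigr)^{p/2} \longrightarrow 0.
\]
Putting these estimates together, the function $u_n(t) = \E\bigl[\sup_{s\le t}|X^n_s - X_s|^p\bigr]$ satisfies $u_n(t) \le \varepsilon_n + C\int_0^t u_n(s)\,\d s$ with $\varepsilon_n \to 0$ and $C$ independent of $n$, and Grönwall finishes the proof.

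\textbf{Main obstacle.} The one place where a Lipschitz-in-coefficients bound is \emph{not} available is the diffusion, because the matrix square root is only Hölder on $S^D_+$. This is circumvented entirely by leaning on the stronger hypothesis of pointwise a.e.\ convergence of $\dot L_n$ (rather than just some weaker integral convergence) plus the uniform $L^\infty$ bound, which together with BDG and dominated convergence make the martingale contribution vanish without any quantitative modulus-of-continuity input. The other subtle point is that $\nabla\Phi_n$ and $\nabla\Phi$ correspond to \emph{different} PDE solutions, so the drift telescoping crucially uses the quantitative stability of Parisi solutions under perturbations of $(L,\alpha)$ provided by Proposition~\ref{p.reg_Phi}\eqref{i.dxPhi_cvg}.
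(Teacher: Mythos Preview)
Your proposal is correct and follows essentially the same approach as the paper: split the drift difference by telescoping, invoke Proposition~\ref{p.reg_Phi}\eqref{i.dxPhi_cvg} for the $\nabla\Phi_n\to\nabla\Phi$ piece and Proposition~\ref{p.reg_Phi}\eqref{i.sup|d_xPhi|<infty} for the Lipschitz piece, handle the martingale term via BDG plus dominated convergence on $(2\dot L_n)^{1/2}-(2\dot L)^{1/2}$, and close with Gr\"onwall. The only difference is the order in which you telescope the drift (you change $\Phi_n\to\Phi$ first, then $X^n\to X$, then $\alpha_n\to\alpha$, then $\dot L_n\to\dot L$; the paper reverses the first two pairs), which is purely cosmetic.
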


\begin{proof}
For simplicity, we write $\nabla\bar\Phi = \nabla \Phi(\cdot,X_\cdot)$ and $\nabla\bar\Phi_n = \nabla \Phi_n(\cdot,X^n_\cdot)$.
Fix any $t\in[0,\T]$, we set
\begin{align*}
    \mathbf{I}= \int_0^t 2\alpha_n\dot L_n\nabla\bar \Phi_n\d r  -  \int_0^t 2\alpha\dot L \nabla\bar \Phi_n\d r,\qquad \mathbf{II} = \int_0^t\Ll(2\dot L_n\Rr)^\frac{1}{2}\d W - \int_0^t\Ll(2\dot L\Rr)^\frac{1}{2}\d W
\end{align*}
and thus $X^n_t - X_t = x_n-x+\mathbf{I} + \mathbf{II}$. We start by splitting
\begin{align*}
    \mathbf{I} = \int_0^t 2(\alpha_n-\alpha)\dot L_n \nabla\bar \Phi_n\d r +  \int_0^t 2\alpha_n\Ll(\dot L_n - \dot L\Rr)\nabla\bar \Phi_n\d r  + \int_0^t 2\alpha_n\dot L_n\Ll(\nabla\bar \Phi_n - \nabla\bar \Phi \Rr)\d r.
\end{align*}
For the last term, we can further split
\begin{align*}
    \nabla\Phi_n(r) - \nabla\bar \Phi(r) = \Ll(\nabla\Phi_n(r,X^n_r) - \nabla\Phi(r,X^n_r) \Rr)+\Ll(\nabla\Phi(r,X^n_r)- \nabla\Phi(r,X_r)\Rr).
\end{align*}
Due to the uniform bound on $\dot L_n$ and the bounds in Proposition~\ref{p.reg_Phi}~\eqref{i.sup|d_xPhi|<infty}, there is a constant $C>0$ such that
\begin{align*}
    C^{-1}|\mathbf{I}| \leq \Ll\|\alpha_n-\alpha\Rr\|_{L^1} + \Ll\|\dot L_n-\dot L\Rr\|_{L^1} +  \Ll\|\nabla \Phi_n - \nabla \Phi\Rr\|_{L^\infty} + \|\nabla\Phi\|_\mathrm{Lip}\int_0^t\Ll|X^n_r-X_r\Rr|\d r.
\end{align*}
Henceforth, we allow the deterministic constant $C$ to change from instance to instance but independent of $n$ or $t$. By the assumption on various convergences and Proposition~\ref{p.reg_Phi}~\eqref{i.dxPhi_cvg}, we can see that the first three terms on the right-hand side vanish as $n\to\infty$. Hence, we get
\begin{align}\label{e.|X^n_t-X_t|<}
    \Ll|X^n_t-X_t\Rr| \leq o_n(1)+C\int_0^t\Ll|X^n_r- X_r\Rr|\d r + |\mathbf{II}|. 
\end{align}
where $\lim_{n\to\infty} o_n(1)=0$ and $o_n(1)$ is dependent of $n$ or $t$.

Fix any $p\geq 1$.
By the BDG inequality~\cite[Theorem~3.28 in Chapter~3]{karatzas1991brownian}, we have
\begin{align*}
    \E \sup_{t\in[0,\T]}\Ll|\mathbf{II}\Rr|^p \leq C \Ll(\int_0^\T \Ll|\Ll(2\dot L_n\Rr)^\frac{1}{2} - \Ll(2\dot L\Rr)^\frac{1}{2}\Rr|^2 \d r\Rr)^{p/2}.
\end{align*}
(A simpler argument is also possible using the observation that $\mathbf{II}$ is a deterministic time change of Brownian motion.)
By the assumption on $\dot L_n$ and the fact that the matrix square root is a continuous function, we can see that the right-hand side vanishes as $n\to\infty$. We can thus absorb $\E |\mathbf{II}|^p$ into $o_n(1)$ in~\eqref{e.|X^n_t-X_t|<} and get
\begin{align*}
    a_n(t) \leq o_n(1)+C\int_0^ta_n(r)\d r,\quad\forall t\in[0,\T].
\end{align*}
where $a_n(r) = \E\Ll[\sup_{r'\in[0,r]} \Ll|X^n_{r'}-X_{r'}\Rr|^p\Rr]$. This along with Gronwall's inequality implies $\lim_{n\to\infty} a_n(\T) = 0$, which is the desired result.
\end{proof}

As another step of preparation, we need the following, where the setting is slightly more general ($\Phi$ is not required to have terminal condition~\eqref{e.phi=initial} and $X_0$ is not specified).

\begin{lemma}[Martingale along the linearization of the Parisi PDE]\label{l.mtgl}
Let $\Phi$ be a classical solution of~\eqref{e.Parisi_PDE} with continuously differentiable $L$, continuous $\alpha$, and a smooth terminal condition.
Let $X$ be a strong solution of~\eqref{e.X_t=}.
Let $g:[0,\T]\times\R^\D\to\R$ be continuous and let $\Psi$ be a classical solution of
\begin{align}\label{e.Psi_g_eqn}
    \partial_t\Psi(t,x) + \la \dot L(t),\, \nabla^2 \Psi(t,x) + 2\alpha(t) \nabla\Psi\nabla\Phi^\intercal(t,x) \ra_{\S^\D} + g(t,x)=0
\end{align}
for $(t,x)\in[0,\T]\times\R^\D$. Then,
\begin{align*}\Psi(t,X_t)+\int_0^t g(r,X_r)\d r 
\end{align*}
is a martingale with index $t\in[0,\T]$.
\end{lemma}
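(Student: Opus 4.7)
The plan is to apply It\^o's formula to $\Psi(t,X_t)$ and use the PDE \eqref{e.Psi_g_eqn} to cancel the deterministic drift, leaving only a stochastic integral which one then recognizes as a (local) martingale.

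First, I would observe that the diffusion coefficient in \eqref{e.X_t=} is $(2\dot L(t))^{1/2}$, so the quadratic covariation of $X$ satisfies $\d\la X\ra_t = 2 \dot L(t)\, \d t$ as a $D\times D$ matrix. Since $\Psi$ is a classical solution (in particular $C^{1,2}$ in the sense allowed by the regularity of $L$ and $\alpha$), It\^o's formula gives
\begin{align*}
\d \Psi(t,X_t) &= \partial_t \Psi(t,X_t)\, \d t + \nabla \Psi(t,X_t)\cdot \d X_t + \la \dot L(t),\, \nabla^2 \Psi(t,X_t)\ra_{S^D}\, \d t.
\end{align*}
Substituting $\d X_t$ from \eqref{e.X_t=} and using the trace identity $\nabla \Psi \cdot (\dot L \nabla \Phi) = \la \dot L, \nabla \Psi \nabla \Phi^\intercal\ra_{S^D}$, the drift term becomes
\begin{align*}
\Ll[\partial_t \Psi + \la \dot L,\, \nabla^2 \Psi + 2\alpha \nabla \Psi \nabla \Phi^\intercal\ra_{S^D}\Rr](t,X_t),
\end{align*}
which by \eqref{e.Psi_g_eqn} equals $-g(t,X_t)$. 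Consequently,
\begin{align*}
\d \Ll[\Psi(t,X_t) + \int_0^t g(r,X_r)\, \d r\Rr] = \nabla \Psi(t,X_t) \cdot \Ll(2\dot L(t)\Rr)^{1/2} \d W_t,
\end{align*}
which is a local martingale.

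To upgrade this to a true martingale, I would invoke the standing regularity assumptions: on the bounded interval $[0,T]$ the continuous function $\dot L$ is bounded, and a classical solution of the linearized equation with a smooth terminal condition has bounded spatial gradient (one can propagate the estimate backwards using the linear parabolic structure, or simply note that $\nabla\Psi$ admits an analogous cascade/Gibbs representation as in Proposition~\ref{p.reg_Phi}~\eqref{i.sup|d_xPhi|<infty}). The integrand of the stochastic integral is then bounded on $[0,T] \times \Omega$, so the stochastic integral is a genuine $L^2$-martingale. The main obstacle is really just bookkeeping the integrability for the martingale rather than local martingale conclusion; once the boundedness of $\nabla \Psi$ and $\dot L$ on $[0,T]$ is in hand, the rest is a direct application of It\^o together with the PDE.
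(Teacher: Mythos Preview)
Your proof is correct and follows essentially the same route as the paper: apply It\^o's formula to $\Psi(t,X_t)$, substitute the SDE for $X$, and use \eqref{e.Psi_g_eqn} to identify the drift as $-g(t,X_t)$, leaving only the stochastic integral. The paper's proof is in fact terser and does not spell out the local-to-true martingale upgrade that you discuss.
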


\begin{proof}
Notice that the quadratic variation of $X$ is given by $\la X\ra_t= 2\dot L(t)$. Using this and Ito's formula, we get
\begin{align*}
    \d \Psi(t,X_t) = \Ll(\partial_t\Psi(t,X_t)+\la \dot L(t),\nabla^2 \Psi(t,X_t) \ra_{\S^\D}\Rr)\d t + \la \nabla \Psi(t,X_t), \d X_t\ra_{\R^\D}.
\end{align*}
Inserting the expression of $\d X_t$ in~\eqref{e.X_t=} and using~\eqref{e.Psi_g_eqn}, we get
\begin{align*}
    \d \Psi(t,X_t) = - g(t,X_t)\d t + \la \nabla\Psi(t,X_t),\Ll(2\dot L\Rr)^\frac{1}{2}(t)\d W_t\ra_{\R^\D}
\end{align*}
which implies the desired result.
\end{proof}

We can get the following result using the It\^o calculus and the fact that $\Phi$ solves the Parisi PDE.

\begin{lemma}[Relation between matrix-valued processes]\label{l.R,A_rel}
Let $\Phi$ and $X$ be associated with some decomposition $(L,\alpha)$.
For $t\in [0,\T]$, define
\begin{gather}\label{e.R(t)=A_t=}
    R(t) = \E \Ll[\nabla\Phi\nabla\Phi^\intercal(t,X_t)\Rr];
    \qquad
    A_t= \nabla^2\Phi(t,X_t). 
\end{gather}
Then, the following holds for every $t\in[0,\T]$:
\begin{gather}
    R(t) = R(0)+ 2 \int_0^t \E \Ll[A_r^\intercal \dot L(r) A_r\Rr]\d r; \label{e.R(t)=R(0)+2int}
    \\
    \E \Ll[A_t\Rr] = \E \Ll[A_\T\Rr] + \int_t^\T \alpha(r)\dot R(r)\d r.\label{e.E[nabla^2Phi]=}
\end{gather}
\end{lemma}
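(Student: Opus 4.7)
First I would reduce to the case where $L$ is $C^1$ and $\alpha$ is continuous, so that Proposition~\ref{p.reg_Phi}~\eqref{i.dsdxPhi_exists} ensures that $\Phi$ and each of its spatial derivatives are genuine classical solutions of~\eqref{e.Parisi_PDE} and of its linearizations. Under this regularity, the terminal condition $\phi$ in~\eqref{e.phi=initial} is smooth (since $P_1$ has compact support), so Lemma~\ref{l.mtgl} applies to $\Psi=\partial_i\Phi$ and $\Psi=\partial_i\partial_j\Phi$. The strategy is to extract two martingales this way and read off~\eqref{e.R(t)=R(0)+2int} and~\eqref{e.E[nabla^2Phi]=} from them.

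\textbf{Extracting the martingales.} Differentiating~\eqref{e.Parisi_PDE} once in $x_i$, the term $\partial_i(\nabla\Phi\,\nabla\Phi^\intercal)$ produces two tensor contributions that, after contracting with the symmetric matrix $\dot L(t)$, combine into a single $2\alpha\,\dot L\cdot(\nabla\partial_i\Phi\,\nabla\Phi^\intercal)$. Thus $\Psi=\partial_i\Phi$ solves~\eqref{e.Psi_g_eqn} with $g=0$, and Lemma~\ref{l.mtgl} implies that $M_t:=\nabla\Phi(t,X_t)$ is a vector-valued martingale. Differentiating a second time in $x_j$ generates an extra cross term coming from those monomials in which the two new derivatives land on distinct factors of $\nabla\Phi\,\nabla\Phi^\intercal$; written in matrix form with $A=\nabla^2\Phi$, that cross term is
\[
g_{ij}(t,x)\;=\;2\alpha(t)\,[A\,\dot L(t)\,A]_{ij}(t,x).
\]
Another application of Lemma~\ref{l.mtgl} with $\Psi=\partial_i\partial_j\Phi$, collected across $(i,j)$, shows that the matrix-valued process
\[
A_t + 2\int_0^t \alpha(r)\,A_r\,\dot L(r)\,A_r\,\d r
\]
is a martingale.

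\textbf{Reading off the two identities.} For~\eqref{e.R(t)=R(0)+2int}, Ito applied to $\partial_i\Phi(t,X_t)$ shows that the diffusion part of $M$ is $A_t(2\dot L(t))^{1/2}\,\d W_t$, which together with symmetry of $A_t$ gives $\d\langle M,M^\intercal\rangle_t = 2\,A_t\,\dot L(t)\,A_t\,\d t$. Since $M_tM_t^\intercal-\langle M,M^\intercal\rangle_t$ is a matrix-valued martingale and $M_0M_0^\intercal=\nabla\Phi\,\nabla\Phi^\intercal(0,X_0)$, taking expectations yields exactly~\eqref{e.R(t)=R(0)+2int}. For~\eqref{e.E[nabla^2Phi]=}, taking expectations in the second martingale between times $t$ and $\T$ gives $\E[A_t]-\E[A_\T]=2\int_t^\T\alpha(r)\E[A_r\dot L(r)A_r]\,\d r$, and substituting $\dot R(r)=2\E[A_r\dot L(r)A_r]$ from~\eqref{e.R(t)=R(0)+2int} gives the desired identity.

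\textbf{Main obstacle and closing.} The only delicate algebraic point is the second differentiation of~\eqref{e.Parisi_PDE}: keeping track of how $\partial_i\partial_j$ distributes over $\nabla\Phi\,\nabla\Phi^\intercal$, and using symmetry of $\dot L$ and of $A$, to identify the extra source term as precisely $2\alpha[A\dot L A]_{ij}$ rather than an off-by-factor-of-two or transposed quantity. The removal of the regularity assumption on $(L,\alpha)$ is more routine: given any decomposition, one chooses $L_n$ as in~\eqref{e.L_n=} (so $L_n$ is $C^\infty$ with uniformly bounded derivatives and $\dot L_n\to\dot L$ a.e.) and $\alpha_n$ a continuous approximation to $\alpha$ in $L^1$ with $\alpha_n(\T)=1$; Lemma~\ref{l.X_n_cvg} gives convergence of the associated $X^n$, while Proposition~\ref{p.reg_Phi}~\eqref{i.dxPhi_cvg} and~\eqref{i.sup|d_xPhi|<infty} give uniform convergence and uniform boundedness of $\nabla\Phi_n$ and $\nabla^2\Phi_n$. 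Dominated convergence then transports both~\eqref{e.R(t)=R(0)+2int} and~\eqref{e.E[nabla^2Phi]=} from the regular case to the general one.
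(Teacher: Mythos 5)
Your proposal is correct and follows essentially the same route as the paper: both arguments rest on the two It\^o identities along the characteristics, namely that $\nabla\Phi(t,X_t)$ is a martingale with diffusion coefficient $\nabla^2\Phi\,(2\dot L)^{1/2}$ and that $\nabla^2\Phi(t,X_t)$ has drift $-2\alpha\,A\,\dot L\,A$, from which \eqref{e.R(t)=R(0)+2int} follows via the quadratic covariation of $\nabla\Phi(t,X_t)$ and \eqref{e.E[nabla^2Phi]=} by taking expectations and substituting $\dot R$. The only difference is presentational: the paper imports these identities directly from \cite[Lemma~3.3]{chen2023parisi}, whereas you rederive them by differentiating the Parisi PDE and invoking Lemma~\ref{l.mtgl}, and you make explicit the approximation step for general $(L,\alpha)$ — both sound.
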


We emphasize that both $R(\cdot)$ and $A_\cdot$ depend on the initial data $X_0$ which is kept implicit here.

\begin{proof}
For $m\in\N$ and $i_1,\dots,i_m\in\{1,\dots,\D\}$, we write $\Phi_{i_1i_2\dots i_m} = \partial_{x_{i_1}} \partial_{x_{i_2}}\cdots \partial_{x_{i_m}}\Phi$.
We recall the following identities from \cite[Lemma~3.3]{chen2023parisi} (with $\mu$ therein replaced by $2L$) obtained by using the It\^o formula and the Parisi PDE~\eqref{e.Parisi_PDE}:
\begin{align}
    \d \Ll(\nabla\Phi(t,X_t) \Rr) & =  \nabla^2\Phi(t,X_t) \Ll(2\dot L\Rr)^\frac{1}{2}(t)\d W_t; \label{e.nablaPhi(b,X(b))}
    \\
    \d \Ll(\Phi_{kl}(t,X_t)\Rr)  & = \sum_{i,j=1}^\D\Ll(-  \alpha (t)\Ll(2\dot L\Rr)_{ij}(t)\Phi_{ik}\Phi_{jl}(t,X_t)\d t +\Phi_{ikl}(t,X_t)\Ll(2\dot L\Rr)^\frac{1}{2}_{ij}(t)\d W_{j,t}  \Rr)\label{e.nabla^2Phi(b,X(b))},
\end{align}
where $k,l\in\{1,\dots,\D\}$.
Then, we compute
\begin{align*}
    \d \Phi_k \Phi_l(t,X_t) = \Phi_k(t,X_t)\d \Phi_l(t,X_t) +  \Phi_l(t,X_t)\d \Phi_k(t,X_t) + \d \la\Phi_k(\cdot,X_\cdot),\, \Phi_l(\cdot,X_\cdot)\ra_t
    \\
    \stackrel{\eqref{e.nablaPhi(b,X(b))}}{=} \text{martingale} + \sum_{i,j=1}^\D \Ll(2\dot L\Rr)_{ij}(t)\Phi_{ik}\Phi_{jl}(t,X_t)\d t
\end{align*}
where $\la\Phi_k(\cdot,X_\cdot),\, \Phi_l(\cdot,X_\cdot)\ra_t$ denotes the corresponding quadratic variation.
The above display implies
\begin{align}\label{e.R_kl(t)=}
    R_{kl}(t) = R_{kl}(0)+ \int_0^t \sum_{i,j=1}^\D \E \Ll[\Ll(2\dot L\Rr)_{ij}(r)\Phi_{ik}\Phi_{jl}(r,X_r)\Rr] \d r
\end{align}
which gives~\eqref{e.R(t)=R(0)+2int}.
We also have
\begin{align*}
    \E \Phi_{kl}(t,X_t) &\stackrel{\eqref{e.nabla^2Phi(b,X(b))}}{=} \E \Phi_{kl}(\T,X_\T)  + \E \int_t^\T \alpha(s)\sum_{i,j=1}^\D \Ll(2\dot L\Rr)_{ij}(r)\Phi_{ik}\Phi_{jl}(r,X_r)\d r
    \\
    &\stackrel{\eqref{e.R_kl(t)=}}{=} \E \Phi_{kl}(\T,X_\T) + \E \int_t^\T \alpha(r) \dot R_{kl}(r) \d r,
\end{align*}
which yields~\eqref{e.E[nabla^2Phi]=}.
\end{proof}

We recall that the function $\psi$ is as in \eqref{e.psi=}. 
\begin{lemma}\label{l.R(0),A_0}
Let $\Phi$ and $X$ be associated with some decomposition $(L,\alpha)$ of some $q\in\mcl Q_\infty$. 
Let $X(0)= \sqrt{2L(0)}\eta$ for a standard $\R^\D$-valued Gaussian vector $\eta$ independent of everything else.
Let $R(\cdot)$ and $A_\cdot$ be given as in~\eqref{e.R(t)=A_t=}. We have
\begin{gather}\label{e.EPhi=-psi}
    \E_\eta \Phi\Ll(0, \sqrt{2L(0)}\eta\Rr) = - \psi(q) ,\qquad \E_\eta  A_0  = \E \Ll[\la \sigma\sigma^\intercal \ra_q  -\la \sigma\sigma'^\intercal\ra_q\Rr]
\end{gather}
where $\E_\eta$ averages only over the randomness of $\eta$ and the Gibbs measure is given by
\begin{align}\label{e.<>_q_correct}
    \la \cdot \ra_q \propto \exp\Ll(\sqrt{2}\sigma\cdot w^q(\brho) - q(1)\cdot\sigma\sigma^\intercal\Rr)\d P_1(\sigma)\d\mathfrak{R}(\brho).
\end{align}
If $L(0)=0$, we also have $R(0) = \E \la \sigma\ra_q \Ll(\la\sigma\ra_q\Rr)^\intercal$.
\end{lemma}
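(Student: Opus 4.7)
The strategy is to use the cascade representation of $\Phi$ from Lemma~\ref{l.cascade_rep} together with a Gaussian absorption trick that merges the shift $\sqrt{2L(0)}\eta$ into the cascade field $w^{\pi_0}$. First I would specialize Lemma~\ref{l.cascade_rep} at $t=0$. Since $(L,\alpha)$ is a decomposition of $q$, we have $\pi_0 = L\circ\alpha^{-1} - L(0) = \aq - L(0)$ on $(0,1]$, and~\eqref{e.q(1)=aq(1)=Lalpha^-1(1)} gives $\pi_0(1) + L(0) = q(1)$. Consequently
\[
\Phi(0,x) = \E \log \iint \exp\bigl( \sqrt{2}\,\sigma\cdot w^{\pi_0}(\brho) + \sigma\cdot x - q(1)\cdot \sigma\sigma^\intercal \bigr) \, \d P_1(\sigma) \, \d \mathfrak{R}(\brho).
\]

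The key observation is the following Gaussian absorption identity. Since $\eta$ is an independent standard $\R^\D$-valued Gaussian vector, conditionally on $\mathfrak{R}$ the process $\brho \mapsto w^{\pi_0}(\brho) + \sqrt{L(0)}\,\eta$ is centered Gaussian with covariance
\[
\pi_0(\brho\wedge\brho') + L(0) = \aq(\brho\wedge\brho'),
\]
so it has the same joint Gaussian law as $w^{\aq}$. Substituting $x = \sqrt{2L(0)}\,\eta$ in the representation above, writing $\sqrt{2}\,\sigma\cdot w^{\pi_0}(\brho) + \sqrt{2L(0)}\,\sigma\cdot\eta = \sqrt{2}\,\sigma\cdot(w^{\pi_0}(\brho) + \sqrt{L(0)}\,\eta)$, and integrating out $\eta$ inside the outer expectation yields
\[
\E_\eta \Phi\bigl(0,\sqrt{2L(0)}\,\eta\bigr) = \E \log \iint \exp\bigl( \sqrt{2}\,\sigma\cdot w^{\aq}(\brho) - q(1)\cdot\sigma\sigma^\intercal \bigr) \, \d P_1 \, \d \mathfrak{R} = -\psi(q),
\]
where the last equality is~\eqref{e.psi=} together with Remark~\ref{r.left-cts} to pass from $w^{\aq}$ to $w^q$ under expectation. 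For the second identity, I would differentiate the representation of $\Phi(0,x)$ twice in $x$, which gives $\nabla^2 \Phi(0,x) = \E[\la\sigma\sigma^\intercal\ra - \la\sigma\sigma'^\intercal\ra]_{P_1,\pi_0,x,-L(0)}$ in the notation of~\eqref{e.f(q),<>_q=}, with two independent replicas. The same absorption trick then converts $\la\cdot\ra_{P_1,\pi_0,\sqrt{2L(0)}\eta,-L(0)}$ into $\la\cdot\ra_q$ of~\eqref{e.<>_q_correct} after averaging over $\eta$, producing $\E_\eta A_0 = \E[\la\sigma\sigma^\intercal\ra_q - \la\sigma\sigma'^\intercal\ra_q]$.

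For the final claim, when $L(0)=0$ we have $\pi_0 = \aq$ and $X_0 = \sqrt{2L(0)}\eta = 0$ deterministically, so $R(0) = \nabla\Phi(0,0)\nabla\Phi(0,0)^\intercal$. The first-derivative analogue of the previous formula gives $\nabla\Phi(0,0) = \E\la\sigma\ra_q$, and taking the outer product delivers the stated expression. The main potential obstacle is the absorption step: one has to justify that the independence of $\eta$ from $(\mathfrak{R},w^{\pi_0})$ really allows the identification of $w^{\pi_0} + \sqrt{L(0)}\,\eta$ with $w^{\aq}$ in joint law, and that, under the order of integration prescribed by Remark~\ref{r.left-cts} (Gaussians first, cascade second), the resulting averages agree with those in the definitions of $\psi$ and $\la\cdot\ra_q$; once this is in place the remainder is routine cascade bookkeeping.
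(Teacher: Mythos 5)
Your proof is correct and follows essentially the same route as the paper's: specialize the cascade representation of Lemma~\ref{l.cascade_rep} at $t=0$, absorb the independent Gaussian shift $\sqrt{L(0)}\,\eta$ into the cascade field via the covariance identity $\pi_0(\cdot)+L(0)=\aq(\cdot)$, invoke Remark~\ref{r.left-cts} to pass between $w^{\aq}$ and $w^{q}$, and then read off the first and second $x$-derivatives for $R(0)$ and $A_0$. The absorption step you flag as the main obstacle is exactly how the paper justifies it, using~\eqref{e.E[ww]=} conditionally on $\mathfrak{R}$ together with the prescribed order of averaging.
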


\begin{proof}Using the representation of $\Phi$ in Lemma~\ref{l.cascade_rep} at $t=0$ with $\mu=P_1$ and~\eqref{e.q(1)=aq(1)=Lalpha^-1(1)}, we have that $\Phi\Ll(0,x+\sqrt{2L(0)}\eta\Rr)$ is equal to
\begin{align*}
    \E \iint \exp\Ll(\sqrt{2}\sigma\cdot w^{\aq -L(0)}(\brho)-q(1)\cdot \sigma\sigma^\intercal +\Ll(x+\sqrt{2L(0)}\eta\Rr)\cdot \sigma\Rr)\d P_1(\sigma)\d \mathfrak{R}(\brho)
\end{align*}
where $\E$ averages over all randomness except for that of $\eta$. In view of the covariance formula~\eqref{e.E[ww]=}, we can see that, conditioned on $\mathfrak{R}$, $w^{\aq-L(0)}+ \sqrt{L(0)}\eta$ has the same distribution as $w^{\aq}$, and we can substitute $w^{\aq}(\brho)$ for $w^{q}(\brho)$ as explained in Remark~\ref{r.left-cts}. Setting $x=0$ and taking $\E_\eta$, we can get the first relation in~\eqref{e.EPhi=-psi} by comparing the above with $\psi$ given in~\eqref{e.psi=}.

Since $A_0 = \nabla^2\Phi \Ll(0,\sqrt{2L(0)}\eta\Rr)$, we can compute $A_0  = \E \Ll[\la \sigma\sigma^\intercal \ra  -\la \sigma \sigma'^\intercal\ra \Rr]$ where 
$\la\cdot\ra$ is the random Gibbs measure proportional to
\begin{align*}
    \exp\Ll(\sqrt{2}\sigma\cdot w^{\aq -L(0)}(\brho)-q(1)\cdot \sigma\sigma^\intercal +\sqrt{2L(0)}\eta\cdot \sigma\Rr)\d P_1(\sigma)\d \mathfrak{R}(\brho).
\end{align*}
By the same reason as above, we have $ \E_\eta\E \la \cdot\ra = \E \la \cdot\ra_q$. Hence, we can verify the relation involving $A_0$ in~\eqref{e.EPhi=-psi}.

Since $R(0) = \nabla\Phi\nabla\Phi^\intercal  \Ll(0,\sqrt{2L(0)}\eta\Rr)$, we can compute $R(0)  = \E \la \sigma \ra\Ll(\E\la \sigma\ra\Rr)^\intercal$ for the same $\la\cdot\ra$ as above. If $L(0)=0$, we simply have $\la\cdot\ra=\la\cdot\ra_q$ which gives the expression for $R(0)$.
\end{proof}

\begin{lemma}[Representation of a functional derivative]\label{l.d_eps_Phi^eps}
Let $(L,\alpha)$ be any composition of some $q\in\mcl Q_\infty$ defined on $[0,\T]$. Let $L':[0,\T]\to \S^\D_+$ be any smooth increasing path. For each $\eps\in[0,1)$, let $\Phi^\eps$ be associated with $(L+\eps L',\alpha)$. Let $\Phi=\Phi^0$ and let $X$ be the process associated with $(L,\alpha)$ with $X_0 = x_0\in\R^\D$. We have
\begin{align}\label{e.d/depsPhi(0,x_0)=-intLRalpha}
    \frac{\d}{\d\eps}\Big|_{\eps=0} \Phi^\eps(0,x_0) & = -L'(0)\cdot \Ll(A_0 + \alpha(0)R(0)\Rr)-\int_0^\T L'(t)\cdot R(t)\d\alpha(t) 
\end{align}
where $R(\cdot)$ and $A_\cdot$ are given in~\eqref{e.R(t)=A_t=}.
\end{lemma}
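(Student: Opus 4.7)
The strategy is to linearize the Parisi PDE in the parameter $\eps$ and invoke the martingale identity of Lemma~\ref{l.mtgl}. Formally differentiating~\eqref{e.Parisi_PDE} (with $L$ replaced by $L+\eps L'$) and the terminal condition~\eqref{e.phi=initial} at $\eps=0$ shows that $\Psi := \frac{\d}{\d\eps}\big|_{\eps=0}\Phi^\eps$ satisfies
\begin{align*}
    \dr_t\Psi + \dot L\cdot\Ll(\nabla^2\Psi + 2\alpha\nabla\Psi\nabla\Phi^\intercal\Rr) + g(t,x)=0,\quad \Psi(\T,x) = -L'(\T)\cdot\Ll(\nabla^2\Phi + \nabla\Phi\nabla\Phi^\intercal\Rr)(\T,x),
\end{align*}
with source $g(t,x) = \dot L'(t)\cdot\Ll(\nabla^2\Phi(t,x) + \alpha(t)\nabla\Phi\nabla\Phi^\intercal(t,x)\Rr)$. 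This matches the hypothesis of Lemma~\ref{l.mtgl}, so $\Psi(t,X_t) + \int_0^t g(r,X_r)\,\d r$ is a martingale; taking expectations and using that $X_0=x_0$ is deterministic yields
\begin{align*}
    \Psi(0,x_0) = -L'(\T)\cdot\Ll(\E[A_\T]+R(\T)\Rr) + \int_0^\T \dot L'(r)\cdot\Ll(\E[A_r]+\alpha(r)R(r)\Rr)\,\d r.
\end{align*}

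Next I would transform this expression into~\eqref{e.d/depsPhi(0,x_0)=-intLRalpha} using Lemma~\ref{l.R,A_rel}. Substituting the identity $\E[A_r] = \E[A_\T] + \int_r^\T \alpha(s)\dot R(s)\,\d s$ into the first integral and applying Fubini produces $(L'(\T)-L'(0))\cdot\E[A_\T] + \int_0^\T \alpha(s)(L'(s)-L'(0))\cdot\dot R(s)\,\d s$. Combining with $\int_0^\T \alpha\dot L'\cdot R\,\d r$ and setting $h(r) := (L'(r)-L'(0))\cdot R(r)$, the sum becomes $\int_0^\T \alpha(r)\dot h(r)\,\d r$. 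Since $\alpha$ is right-continuous and $h$ is absolutely continuous with $h(0)=0$, Stieltjes integration by parts gives $\int_0^\T \alpha\dot h\,\d r = h(\T) - \int_{(0,\T]} h\,\d\alpha$. A final application of Lemma~\ref{l.R,A_rel} at $t=0$ to rewrite $\int_{[0,\T]} R\,\d\alpha = \E[A_\T]+R(\T)-\E[A_0]$, together with $\E[A_0]=A_0$, yields~\eqref{e.d/depsPhi(0,x_0)=-intLRalpha} after cancellations, with the atom $\alpha(0)L'(0)\cdot R(0)$ of $\d\alpha$ at the origin singled out so that the integral $\int_0^\T$ on the right-hand side is understood over $(0,\T]$.

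The above derivation assumes $L$ is continuously differentiable and $\alpha$ is continuous, so that $\Phi$ is a classical solution and Lemma~\ref{l.mtgl} applies directly. For the general Lipschitz $L$ and p.d.f.\ $\alpha$, I would approximate $(L,\alpha)$ by $(L_n,\alpha_n)$ with $L_n$ smoothed as in~\eqref{e.L_n=} and $\alpha_n$ continuous and strictly increasing converging in $L^1$, apply the preceding derivation to each $n$, and pass to the limit. The convergence $\frac{\d}{\d\eps}\big|_{\eps=0}\Phi^\eps_n(0,x_0)\to\frac{\d}{\d\eps}\big|_{\eps=0}\Phi^\eps(0,x_0)$ can be obtained from the cascade representation of Lemma~\ref{l.cascade_rep} together with Corollary~\ref{c.interpolation} applied uniformly in $\eps$ near $0$; Proposition~\ref{p.reg_Phi} and Lemma~\ref{l.X_n_cvg} ensure $R_n(\cdot)\to R(\cdot)$ and $\E[A^n_\cdot]\to\E[A_\cdot]$; and weak convergence $\d\alpha_n\to\d\alpha$ combined with continuity of $L'\cdot R$ yields convergence of the Stieltjes integrals.

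The main obstacle is the careful bookkeeping of the atom of $\d\alpha$ at $t=0$: the Stieltjes integration by parts must be performed against the right-continuous version of $\alpha$ and singles out the boundary term $-\alpha(0)L'(0)\cdot R(0)$, which is what forces the convention $\int_0^\T = \int_{(0,\T]}$ in~\eqref{e.d/depsPhi(0,x_0)=-intLRalpha}. A secondary difficulty is passing to the limit in $\int_{(0,\T]}L'\cdot R\,\d\alpha$: this requires continuity of the integrand $L'\cdot R$ at the (possibly countably many) atoms of $\d\alpha$, which is inherited from the regularity of $\nabla\Phi$ provided by Proposition~\ref{p.reg_Phi} and the continuity in $t$ of the SDE solution $X$ from~\eqref{e.X_t=}.
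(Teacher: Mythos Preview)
Your proposal is correct and follows essentially the same three-step strategy as the paper: (i) under the regularity assumption that $L$ is continuously differentiable and $\alpha$ is continuous (the paper also takes $\alpha$ strictly increasing), linearize the Parisi PDE in $\eps$, apply Lemma~\ref{l.mtgl} to the pair $(\Psi,g)$, and read off the terminal value $\Psi(\T,x)=-L'(\T)\cdot(\nabla^2\Phi+\nabla\Phi\nabla\Phi^\intercal)(\T,x)$; (ii) transform the resulting expression into~\eqref{e.d/depsPhi(0,x_0)=-intLRalpha} using Lemma~\ref{l.R,A_rel}; (iii) remove the regularity assumption by approximation, invoking Lemma~\ref{l.X_n_cvg}, Proposition~\ref{p.reg_Phi}, and the cascade representation for the convergence of both sides. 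The only notable difference is in the bookkeeping of step~(ii): the paper performs two successive integrations by parts (first on $\int_0^\T \dot L'\cdot\E[A_\cdot]$, then on $\int_0^\T L'\cdot\alpha\dot R$), whereas you substitute~\eqref{e.E[nabla^2Phi]=} directly, apply Fubini, and collect terms through the auxiliary function $h(r)=(L'(r)-L'(0))\cdot R(r)$ before a single Stieltjes integration by parts. Both routes are equivalent; your careful separation of the possible atom $\alpha(0)L'(0)\cdot R(0)$ is a point the paper handles implicitly through the approximation step rather than by explicit convention.
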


We view~\eqref{e.d/depsPhi(0,x_0)=-intLRalpha} as a generalization of \cite[Lemma~3.7]{talagrand2006parisi} and~\cite[(19)]{auffinger2015properties} that focused on the case $\D=1$.

\begin{proof}We proceed in five steps. In the first two steps, we assume that $L$ is
continuously differentiable and that $\alpha$ is continuous and strictly
increasing. Under these assumptions, we verify~\eqref{e.d/dPhi^eps=}.
In the remaining three steps, we use approximation arguments to extend
\eqref{e.d/dPhi^eps=} to the general case. Whenever the derivative exists, we
write $\partial_\eps = \frac{\mathrm{d}}{\mathrm{d}\eps}\big|_{\eps=0}$ and
$\Psi = \partial_\eps \Phi^\eps$, with the latter identity understood
pointwise.

\textit{Step~1.}
Under the additional assumption on $(L,\alpha)$, we show
\begin{align}\label{e.d/dPhi^eps=}
    \Psi(0,x_0)= -L'(\T)\cdot N(\T) + \int_0^\T \dot L'(t) \cdot N(t)\d t,
\end{align}
where
\begin{align*}N(t) = \E \Ll[\nabla^2\Phi(t,X_t)+\alpha(t)\nabla\Phi\nabla\Phi^\intercal(t,X_t)\Rr],\quad\forall t\in [0,\T].
\end{align*}
First, we argue that $\Psi$, $\partial_t \Psi$, and $\partial^\bi_x\Psi$ are well defined and that the derivatives commute: $\partial_t \Psi = \partial_\eps \partial_t \Phi^\eps$ and $\partial^\bi_x \Psi = \partial_\eps \partial^\bi_x \Phi^\eps$. 
For the derivative in $t$, we recall that $\partial_t \Phi^\eps$ has the expression given in Lemma~\ref{l.time_der_sol}, which has the form appearing on the left-hand side of the interpolation computation~\eqref{e.E<F>_q-E<F>_q'}.
Based on this, we use Lemma~\ref{l.interpolation} to compute $\partial_\eps\partial_t \Phi^\eps$. On the other hand, we can first compute $\partial_\eps \Phi^\eps$ using Lemma~\ref{l.cascade_rep} and Lemma~\ref{l.interpolation}. Then, we can compute $\partial_t\partial_\eps \Phi^\eps$ following the same argument in Lemma~\ref{l.time_der_sol}. Comparing these expressions, we can verify $\partial_\eps \partial_t \Phi^\eps = \partial_t\partial_\eps \Phi^\eps$. The main point is that all objects are computable because they are expectations of bounded functions of the spin and cascade variables. The detail is tedious and omitted here. The same can be said about derivatives in $x$.

Allowed by this, we can differentiate the equation~\eqref{e.Parisi_PDE} satisfied by $\Phi^\eps$ in $\eps$ to see that $\Psi$ is the classical solution of~\eqref{e.Psi_g_eqn} with $g$ given by 
\begin{align*}
    g(t,x) = \la \dot L'(t),\, \nabla^2 \Phi(t,x) +\alpha(t) \nabla\Phi\nabla\Phi^\intercal (t,x)\ra_{\S^\D}.
\end{align*}
By the martingale property proved in Lemma~\ref{l.mtgl}, we thus have
\begin{align}\label{e.Psi(0,x_0)=mtgl}
    \Psi(0,x_0) = \E \Ll[\Psi(\T,X_\T) + \int_0^\T g(t,X_t)\d s\Rr] .
\end{align}
Now, let us evaluate $\E\Ll[\Psi(\T,X_\T)\Rr]$. In view of the terminal condition given in~\eqref{e.phi=initial} with $\mu=P_1$, we can compute for every $x\in \R^\D$
\begin{align*}
	\Psi(\T,x) = - q'(1)\cdot \la\sigma\sigma^\intercal \ra,\qquad \nabla\Phi(\T,x) = \la \sigma\ra,\qquad \nabla^2\Phi(\T,x) = \la \sigma\sigma^\intercal\ra - \la\sigma\ra\la\sigma\ra^\intercal
\end{align*}
where $q'=L'\circ\alpha^{-1}$ and $\la\cdot\ra$ is the deterministic Gibbs measure associated with the right-hand side in~\eqref{e.phi=initial} at $x$. 
Also, since we have assumed that $\alpha$ is strictly increasing in the first two steps, we have $\alpha(\T)=1$ and $\alpha^{-1}(1)=\T$. In particular, we have $q'(1)=L'\circ\alpha^{-1}(1)=L'(\T)$.
Using these, we get
\begin{align*}
	\Psi(\T,x) = -L'(\T)\cdot \Ll(\nabla^2\Phi(\T,x) + \alpha(\T)\nabla\Phi\nabla\Phi^\intercal(\T,x)\Rr),\quad\forall x\in \R^\D.
\end{align*}
This along with~\eqref{e.Psi(0,x_0)=mtgl} yields~\eqref{e.d/dPhi^eps=} under the assumption that $L$ is continuously differentiable and $\alpha$ is continuous and strictly increasing.

\textit{Step~2.}
Continuing with the regularity assumptions on $(L,\alpha)$, we now verify~\eqref{e.d/depsPhi(0,x_0)=-intLRalpha}.
We want to use the relations from Lemma~\ref{l.R,A_rel}. Recall $R(\cdot)$ and $A_\cdot$ given in~\eqref{e.R(t)=A_t=}. For brevity, we write $\mathbf{a}(\cdot) = \E [A_\cdot]$. Then, we have
\begin{align*}N(t) = \mathbf{a}(t) + \alpha(t)R(t),\quad\forall t\in[0,\T].
\end{align*}
Inserting this into~\eqref{e.d/dPhi^eps=}, we get
\begin{align*}
    \Psi(0,x_0) = - L'(\T)\cdot \Ll(\mathbf{a}(\T) + \alpha(\T)R(\T)\Rr)+ \int_0^\T \dot L'(t)\cdot \mathbf{a}(t) \d t + \int_0^\T \dot L'(t)\cdot \alpha(t)R(t) \d t .
\end{align*}
Next, we compute the second term on the right using the integration by parts (IBP) and results from Lemma~\ref{l.R,A_rel}:
\begin{align*}
    \int_0^\T \dot L'(t)\cdot \mathbf{a}(t) \d t &\stackrel{\text{(IBP)}}{=} L'(\T)\cdot \mathbf{a}(\T)-L'(0)\cdot \mathbf{a}(0) -\int_0^\T L'(t)\cdot\dot{\mathbf{a}}(t)\d t
    \\
    &\stackrel{\eqref{e.E[nabla^2Phi]=}}{=} L'(\T)\cdot \mathbf{a}(\T)-L'(0)\cdot \mathbf{a}(0) + \int_0^\T L'(t)\cdot \alpha(t)\dot R(t)\d t
    \\
    &\stackrel{\text{(IBP)}}{=} L'(\T)\cdot \Ll(\mathbf{a}(\T)+\alpha(\T)R(\T)\Rr) - L'(0)\cdot \Ll(\mathbf{a}(0)+\alpha(0)R(0)\Rr)
    \\
    &\qquad  - \int_0^\T\dot L'(t)\cdot \alpha(t)R(t)\d t - \int_0^\T L'(t)\cdot R(t) \d \alpha(t).
\end{align*}
Plugging this back to the previous display, we get~\eqref{e.d/depsPhi(0,x_0)=-intLRalpha}. Notice that $A_0$ is deterministic (since $X_0=x_0$ is deterministic) and thus $\mathbf{a}(0)= A_0$.

\textit{Step~3.}
We want to conclude by an approximation argument. In this step, we introduce the
necessary setup, and we complete the argument in the next two steps.
Fix any $(L,\alpha)$. For each $n\in\N$, let $L_n$ be the mollified version of $L$ given in~\eqref{e.L_n=}. In particular, $\Ll(L_n(\T)\Rr)_{n\in\N}$ converges to $L(\T)$ and $\Ll(\dot L_n\Rr)_{n\in\N}$ converges to $\dot L$ pointwise a.e.\ on $[0,\T]$. Let $(\alpha_n)_{n\in\N}$ be a sequence of continuous and strictly increasing p.d.f.s defined on $[0,\T]$ and converging to $\alpha$ in $\cM$.
In particular, we can choose $(\alpha_n)_{n\in\N}$ in such a way that $(\alpha_n)_{n\in\N}$ converges pointwise a.e.\ to $\alpha$; 
and $(\alpha_n(0))$ converges $\alpha(0)$. Notice that $((L_n,\alpha_n))_{n\in\N}$ satisfies the conditions in Lemma~\ref{l.X_n_cvg}.

For each $n\in\N$ and $\eps\in[0,1)$, let $\Phi^\eps_n$ be associated with $(L_n+\eps L',\alpha_n)$, set $\Phi_n=\Phi^0_n$, and $\Psi_n = \partial_\eps \Phi_n^\eps$. Let $X^n$ be associated with $(L_n,\alpha_n)$.
For each $n\in\N$, we define $R_n(\cdot)$ and $A^n_\cdot$ as in~\eqref{e.R(t)=A_t=} with $\Phi_n,X^n$ substituted for $\Phi,X$ therein. 
In the first two steps, we have shown~\eqref{e.d/depsPhi(0,x_0)=-intLRalpha} for these approximations, namely, 
\begin{align}\label{e.Psi_n(0,x_0)=}
    \Psi_n(0,x_0) = -L'(0)\cdot \Ll(A^n_0 + \alpha_n(0)R_n(0)\Rr) -\int_0^\T L'(t)\cdot R_n(t)\d \alpha_n(t).
\end{align}
In the next two steps, we show that the right-hand side and the left-hand side in this display converge, respectively, to those in~\eqref{e.d/depsPhi(0,x_0)=-intLRalpha}.

\textit{Step~4.}
We show the convergence of the right-hand side of~\eqref{e.Psi_n(0,x_0)=}.
Using the uniform bound on derivatives of $\Phi_n$ and $\Phi$ and the uniform convergence of derivatives of $\Phi_n$ in Proposition~\ref{p.reg_Phi}~\eqref{i.sup|d_xPhi|<infty} and~\eqref{i.dxPhi_cvg}, respectively, we can find a constant $C$ and a vanishing sequence $(o_n(1))_{n\in\N}$ of positive real numbers such that, uniformly in $n$ and $t$,
\begin{align*}
	\Ll|R_n(t)-R(t)\Rr|\leq o_n(1)+C\E\Ll|X^n_t-X_t\Rr|.
\end{align*}
Then, using Lemma~\ref{l.X_n_cvg}, we can see that $(R_n)_{n\in\N}$ converges to $R$ uniformly on $[0,\T]$. Therefore, the right-hand side of~\eqref{e.d/depsPhi(0,x_0)=-intLRalpha} is stable under this approximation:
\begin{align*}
    \lim_{n\to\infty} \int_0^\T L'(t)\cdot R_n(t)\d\alpha_n(t) = \int_0^\T L'(t)\cdot R(t)\d\alpha(t).
\end{align*}
In view of~\eqref{e.R(t)=A_t=}, we have $R_n(0)= \nabla\Phi_n\nabla\Phi^\intercal_n(0,x_0)$ and $A^n_0 = \nabla^2\Phi_n(0,x_0)$. Hence, we also have $\lim_{n\to\infty} R_n(0)=R(0)$ and $\lim_{n\to\infty} A^n_0 = A_0$ by Proposition~\ref{p.reg_Phi}~\eqref{i.dxPhi_cvg}. In conclusion, the right-hand side in~\eqref{e.Psi_n(0,x_0)=} converges to that in~\eqref{e.d/depsPhi(0,x_0)=-intLRalpha}.

\textit{Step~5.}
We show the convergence of the left-hand side of~\eqref{e.Psi_n(0,x_0)=}.
Namely, that $\Psi_n(0,x_0)$ converges to $\Psi(0,x_0)$. 
To compute $\Psi_n(0,x_0) = \partial_\eps \Phi_n^\eps(0,x_0)$, we consider the representation of $\Phi_n^\eps(0,x_0)$ given in Lemma~\ref{l.cascade_rep}:
\begin{align*}
    \Phi_n^\eps(0,x_0)=\E\log\iint\exp\Ll(\sqrt{2}\sigma\cdot w^{L_n^\eps\circ\alpha^{-1}_n}(\brho)-\sigma\cdot x_0-\sigma\sigma^\intercal\cdot L_n^\eps\circ\alpha_n^{-1}(1)\Rr)\d\mu(\sigma)\d\fR(\brho)
\end{align*}
where $L_n^\eps=L_n+\eps L'$. Let us denote the Gibbs measure associated with $\Phi^\eps_n(0,x_0)$ by $\la\cdot\ra_{L^\eps_n\circ\alpha_n^{-1}}$ as it only depends on $L^\eps_n\circ\alpha_n^{-1}$.
Then, we can compute the derivative using the interpolation~\eqref{e.f(q)-f(q')=int} in Lemma~\ref{l.interpolation} to get
\begin{align*}
	\Psi_n(0,x_0) = \E \la \sigma\sigma^\intercal \cdot L'\circ\alpha_n^{-1}(1) - \sigma\sigma'^\intercal \cdot L'\circ\alpha_n^{-1}\Ll(\brho\wedge\brho'\Rr) \ra_{L_n\circ\alpha^{-1}_n}.
\end{align*}
Let us denote the random variable inside $\E\la\cdot\ra_{L_n\circ\alpha^{-1}_n}$ by $f_{\alpha_n}$, highlighting the dependence on $\alpha_n$, so that $\Psi_n(0,x_0)=\E\la f_{\alpha_n}\ra_{L_n\circ\alpha^{-1}_n}$.
Similarly, we have
\begin{align*}
    \Psi(0,x_0) = \E \la \sigma\sigma^\intercal \cdot L'\circ\alpha^{-1}(1) - \sigma\sigma'^\intercal \cdot L'\circ\alpha^{-1}\Ll(\brho\wedge\brho'\Rr) \ra_{L\circ\alpha^{-1}}
\end{align*}
and we denote the random variable inside $\E\la\cdot\ra_{L\circ\alpha^{-1}}$ by $f_{\alpha}$.
Then, we can estimate 
\begin{align}\label{e.|Phi_n-Phi|}
	\Ll|\Psi_n(0,x_0)-\Psi(0,x_0)\Rr| \leq \Ll|\E \la f_{\alpha_n} \ra_{L_n\circ\alpha^{-1}_n} - \E \la f_{\alpha} \ra_{L_n\circ\alpha^{-1}_n} \Rr| + \Ll|\E \la f_{\alpha} \ra_{L_n\circ\alpha^{-1}_n} - \E \la f_{\alpha} \ra_{L\circ\alpha^{-1}} \Rr|.
\end{align}
Let us bound each term on the right.
Using the Lipschitzness of $L'$, there is a constant $C>0$ such that
\begin{align}
    \Ll|\E \la f_{\alpha_n} \ra_{L_n\circ\alpha^{-1}_n} - \E \la f_{\alpha} \ra_{L_n\circ\alpha^{-1}_n} \Rr| \leq C\Ll|\alpha^{-1}_n(1)-\alpha^{-1}(1)\Rr| + C\E \la \Ll|\alpha^{-1}_n-\alpha^{-1}\Rr|(\brho\wedge\brho')\ra_{L_n\circ\alpha^{-1}_n}\notag
    \\
    \stackrel{\text{L.\ref{l.invar_cascade}}}{=} C\Ll|\alpha^{-1}_n(1)-\alpha^{-1}(1)\Rr| + C\Ll\|\alpha^{-1}_n-\alpha^{-1}\Rr\|_{L^1}\label{e.|E<f>-E<f>|_1}
\end{align}
where we used the invariance of cascades to see that $\brho\wedge\brho'$ distributes uniformly under $\E\la\cdot\ra_{L_n\circ\alpha^{-1}_n}$.
Next, we use the interpolation argument as in Corollary~\ref{c.interpolation} (with $L_n\circ\alpha^{-1}_n$ and $L\circ\alpha^{-1}$ substituted for $q$ and $q'$ therein) to find a constant $C$ such that
\begin{align}\label{e.|E<f>-E<f>|_2}
\begin{split}
    &\Ll|\E \la f_\alpha \ra_{L_n\circ\alpha^{-1}_n} - \E \la f_{\alpha} \ra_{L\circ\alpha^{-1}} \Rr|
    \\
    &\leq C\|f_\alpha\|_{L^\infty}\Ll(\Ll|L_n\circ\alpha^{-1}_n(1)-L\circ\alpha^{-1}(1)\Rr|+ \Ll\|L_n\circ\alpha^{-1}_n-L\circ\alpha^{-1}\Rr\|_{L^1}\Rr).
\end{split}
\end{align}
Due to our choice of $(L_n,\alpha_n)$ in Step~3, the terms on the right of~\eqref{e.|E<f>-E<f>|_1} and~\eqref{e.|E<f>-E<f>|_2} vanish as $n\to \infty$. Inserting them to~\eqref{e.|Phi_n-Phi|}, we get $\lim_{n\to\infty}\Psi_n(0,x_0)=\Psi(0,x_0)$.

\textit{Conclusion.}
In Steps~1 and~4, we established~\eqref{e.d/depsPhi(0,x_0)=-intLRalpha} for regular
pairs $(L,\alpha)$. In Step~3, we set up an approximation scheme by selecting a
sequence of regular pairs $(L_n,\alpha_n)$ such that
\eqref{e.Psi_n(0,x_0)=} converges to the corresponding quantity for a general
pair $(L,\alpha)$.  
In Steps~4 and~5, we verified that one may pass to the limit in
\eqref{e.Psi_n(0,x_0)=}, thereby obtaining
\eqref{e.d/depsPhi(0,x_0)=-intLRalpha}.  
This completes the proof.
\end{proof}

We recall the definition of joint decompositions of paths from Subsection~\ref{s.joint_decomp}, as well as the initial condition $\psi$ as in~\eqref{e.psi=} and its differentiability in~\eqref{e.psi_diff}.
In the following, we often assume the following common setting:
\begin{enumerate}[start=1,label={\rm (\textbf{S})}]
    \item \label{i.setting}
    Let $p, q\in\mcl Q_\infty$ satisfy $p=\partial_q \psi(q)$ and let $(L_p,L_q,\alpha)$ be any joint decomposition of $(p,q)$ on $[0,\T]$.
Let $\Phi$ and $X$ be associated with $( L_q, \alpha)$. Let $R(\cdot)$ and $A_\cdot$ be given as in~\eqref{e.R(t)=A_t=} corresponding to $(L_q,\alpha)$.
\end{enumerate}

In addition to~\ref{i.setting}, we often need to specify additional properties of the decomposition and the value of $X_0$.

\begin{lemma}[Representation of $\partial_q\psi(q)$, Part~1]\label{l.rep_grad_psi}
Under~\ref{i.setting}, let $X_0 = \sqrt{2L_q(0)}\eta$ for a standard $\R^\D$-valued Gaussian vector $\eta$ independent of everything else. Then, for every smooth increasing path $L':[0,\T]\to \S^\D_+$, we have\begin{align*}
    L'(0)\cdot \alpha(0)\E_\eta [R(0)] + \int_0^\T L'(t)\cdot \E_\eta[R(t)]\d \alpha(t) = \int_0^\T L'(t)\cdot L_p(t)\d \alpha(t),
\end{align*}
where $\E_\eta$ averages over the randomness of $\eta$.
\end{lemma}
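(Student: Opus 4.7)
My strategy is to compute the derivative $F'(0) := \left.\frac{\d}{\d\eps}\right|_{\eps=0}(-\psi(q^\eps))$ in two distinct ways along the perturbation $L^\eps := L_q + \eps L'$, where $q^\eps \in \mcl Q_\infty$ is the path whose left-continuous version on $(0,1]$ equals $L^\eps \circ \alpha^{-1}$, so that $(L^\eps, \alpha)$ decomposes $q^\eps$ and $q^0 = q$. Let $\Phi^\eps$ be associated with $(L^\eps, \alpha)$. Since $q^\eps - q$ agrees a.e.\ with $\eps L' \circ \alpha^{-1}$ and $p = L_p \circ \alpha^{-1}$ a.e., the Fr\'echet differentiability~\eqref{e.psi_diff} combined with Lemma~\ref{l.law_dalpha} directly yields
\[
F'(0) = -\int_0^1 L_p \circ \alpha^{-1}(s) \cdot L' \circ \alpha^{-1}(s)\, \d s = -\int_0^\T L_p(t) \cdot L'(t)\, \d \alpha(t).
\]

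For the second computation, Lemma~\ref{l.R(0),A_0} applied to the decomposition $(L^\eps, \alpha)$ of $q^\eps$ gives $-\psi(q^\eps) = \E_\eta \Phi^\eps(0, \sqrt{2L^\eps(0)}\,\eta)$. I split this as $G(\eps) + [-\psi(q^\eps) - G(\eps)]$, where
\[
G(\eps) := \E_\eta \Phi^\eps\bigl(0, \sqrt{2L_q(0)}\,\eta\bigr)
\]
uses the $\eps$-independent argument $\sqrt{2L_q(0)}\eta$. Applying Lemma~\ref{l.d_eps_Phi^eps} to $\Phi^\eps$ at this fixed $x_0 = \sqrt{2L_q(0)}\eta$ and then taking $\E_\eta$ (with the interchange justified by the uniform derivative bounds in Proposition~\ref{p.reg_Phi}) yields
\[
G'(0) = -L'(0) \cdot \E_\eta\bigl[A_0 + \alpha(0) R(0)\bigr] - \int_0^\T L'(t) \cdot \E_\eta R(t)\, \d \alpha(t).
\]
For the remainder, Lemma~\ref{l.cascade_rep} expresses $\Phi^\eps(0, \sqrt{2L_q(0)}\eta)$ through the cascade field $w^{\overrightarrow{q^\eps} - L^\eps(0)}$; absorbing $\sqrt{L_q(0)}\,\eta$ into this field (conditionally on $\mathfrak{R}$) produces a Gaussian field with covariance $\overrightarrow{q^\eps}(\brho\wedge\brho') - \eps L'(0)$, equal in law to $w^{\overrightarrow{q^\eps} - \eps L'(0)}$. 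Its diagonal is $q^\eps(1) - \eps L'(0)$, while the self-interaction term of the Hamiltonian is still $-q^\eps(1) \cdot \sigma\sigma^\intercal$; in the notation~\eqref{e.f(q),<>_q=}, and using $-\psi(q^\eps) = \mathbf{f}_{P_1}(\overrightarrow{q^\eps}, 0, 0)$ by~\eqref{e.psi=} and Remark~\ref{r.left-cts},
\[
G(\eps) = \mathbf{f}_{P_1}\bigl(\overrightarrow{q^\eps} - \eps L'(0),\, 0,\, -\eps L'(0)\bigr), \qquad -\psi(q^\eps) = \mathbf{f}_{P_1}(\overrightarrow{q^\eps},\, 0,\, 0).
\]
The interpolation identity~\eqref{e.f(q)-f(q')=int} with $q - q' = z - z' = \eps L'(0)$, a passage to the limit $\eps \downarrow 0$ via Corollary~\ref{c.interpolation}, together with the identity $\E_\eta A_0 = \E\la \sigma\sigma^\intercal - \sigma\sigma'^\intercal\ra_q$ from Lemma~\ref{l.R(0),A_0}, give
\[
\left.\frac{\d}{\d\eps}\right|_{\eps=0}\bigl[-\psi(q^\eps) - G(\eps)\bigr] = L'(0) \cdot \E_\eta A_0.
\]

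Equating the two expressions for $F'(0)$, the contributions $L'(0)\cdot\E_\eta A_0$ cancel exactly, and rearranging produces the claimed identity. The key subtlety is the appearance of the $z$-parameter $-\eps L'(0)$ in the cascade expression for $G(\eps)$: the covariance shift by $-\eps L'(0)$ affects the diagonal, whereas the Hamiltonian's self-interaction still uses the unshifted $q^\eps(1)$, so the discrepancy enters as a nonzero $z$ and hence a nonzero $R_{1,1}$ term in~\eqref{e.f(q)-f(q')=int}. Neglecting this produces a spurious uncancelled term $L'(0)\cdot\E\la\sigma\sigma^\intercal\ra_q$. An alternative derivation computes $G'(0)$ by differentiating $\sqrt{2L^\eps(0)}\eta$ in $\eps$ and applying the chain rule together with Gaussian integration by parts in $\eta$; the matching cancellation arises via the Lyapunov equation $\dot S(0)S(0) + S(0)\dot S(0) = 2L'(0)$ for $S(\eps) = \sqrt{2L^\eps(0)}$, but this route is delicate when $L_q(0)$ is singular, which is why I prefer the cascade identification.
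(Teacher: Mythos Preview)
Your proof is correct and follows the paper's overall architecture: compute $\partial_\eps|_{\eps=0}(-\psi(q^\eps))$ once via the Fr\'echet derivative~\eqref{e.psi_diff} (giving the right-hand side) and once via the Parisi-PDE representation, splitting the latter into the piece with frozen initial point $\sqrt{2L_q(0)}\eta$ (your $G'(0)$, the paper's $\mathbf{II}$, both handled by Lemma~\ref{l.d_eps_Phi^eps}) plus a remainder accounting for the $\eps$-dependent initial point. The cancellation of $L'(0)\cdot\E_\eta A_0$ is the same in both.

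The one genuine difference is how the remainder is evaluated. The paper introduces an \emph{independent} Gaussian $\eta'$, invokes the distributional identity $\sqrt{2L^\eps(0)}\eta\stackrel{d}{=}\sqrt{2L_q(0)}\eta+\sqrt{2\eps L'(0)}\eta'$, and Taylor-expands $\Phi^\eps$ in $\sqrt{\eps}\eta'$; the first-order term vanishes by $\E\eta'=0$ and the second-order term gives $L'(0)\cdot\E[\nabla^2\Phi^\eps]\to L'(0)\cdot\E_\eta A_0$. You instead absorb $\sqrt{2L_q(0)}\eta$ into the cascade field, identify $G(\eps)$ and $-\psi(q^\eps)$ as two values of $\mathbf{f}_{P_1}$ differing by the constant shift $\eps L'(0)$ in both the path and the $z$-slot, and read off the limit from the interpolation formula~\eqref{e.f(q)-f(q')=int}. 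Your route is arguably tidier---no Taylor remainder to control, and the invariance of cascades is implicitly doing the work that $\E\eta'=0$ does in the paper---while the paper's auxiliary-$\eta'$ trick is slightly more self-contained (it does not need to track the $z$-parameter). Both approaches sidestep the singularity issue you flag for differentiating $\sqrt{2L^\eps(0)}$ directly.
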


\begin{proof}
Let $\eta'$ be a standard $\R^\D$-valued Gaussian vector independent of everything else. Henceforth, $\E=\E_{\eta,\eta'}$ averages over the randomness of $\eta$ and $\eta'$. Let $q'\in\mcl Q_\infty$ be determined through the relation $\overrightarrow{q'} = L'\circ\alpha^{-1}$ on $(0,1]$. Then, $(L_p, L_q+\eps L',\alpha)$ is a joint decomposition of $(p,q+\eps q')$ for $\eps\geq0$. 
Let $\Phi^\eps$ be associated with $(L_q+\eps L',\alpha)$. Notice that $\sqrt{L_q(0)+\eps L'(0)}\eta$ is equal to $\sqrt{L_q(0)}\eta + \sqrt{\eps L'(0)}\eta'$ in law. Hence, using the first relation in Lemma~\ref{l.R(0),A_0}, we get
\begin{align}\label{e.EPhi^eps=-psi}
    \E \Phi^\eps\Ll(0, \sqrt{2L_q(0)}\eta + \sqrt{2\eps L'(0)}\eta'\Rr) = -\psi \Ll(q+\eps q'\Rr).
\end{align}
Next, we compute the derivatives of both sides in $\eps$ and evaluate them at $\eps=0$.

We first compute the derivative of the left-hand side. For this, we consider two parts
\begin{align*}
    \mathbf{I} & = \eps^{-1}\Ll(\E \Phi^\eps\Ll(0, \sqrt{2L_q(0)}\eta + \sqrt{2\eps L'(0)}\eta'\Rr) - \E \Phi^\eps\Ll(0, \sqrt{2L_q(0)}\eta \Rr)\Rr),
    \\
    \mathbf{II} &= \eps^{-1}\Ll(\E \Phi^\eps\Ll(0, \sqrt{2L_q(0)}\eta \Rr) - \E \Phi^0\Ll(0, \sqrt{2L_q(0)}\eta \Rr)\Rr).
\end{align*}
For the second part, we apply Lemma~\ref{l.d_eps_Phi^eps} (with $\sqrt{2L(0)}\eta$ substituted for $x_0$ therein). We can justify the interchange of derivative and $\E$ using the regularity proved in Proposition~\ref{p.reg_Phi}. Hence, we get
\begin{align*}
    \lim_{\eps\to0}\mathbf{II} = - L'(0)\cdot \E\Ll[A_0+\alpha(0)R(0)\Rr] - \int_0^\T L'(t)\cdot \E [R(t)]\d \alpha(t).
\end{align*}
Now, we turn to $\mathbf{I}$. By Taylor's expansion, we get
\begin{align*}
    \mathbf{I} & = \eps^{-\frac{1}{2}}\E \Ll[\nabla\Phi^\eps(\cdots)\cdot \sqrt{L'(0)}\eta'\Rr] + \E \Ll[\Ll(\sqrt{L'(0)}\eta'\Rr)^\intercal\nabla^2\Phi^\eps(\cdots)\sqrt{L'(0)}\eta'\Rr] + O\Ll(\eps^\frac{1}{2}\Rr)
    \\
    & = L'(0)\cdot \E\Ll[\nabla^2\Phi^\eps(\cdots)\Rr] + O\Ll(\eps^\frac{1}{2}\Rr) 
\end{align*}
where $(\cdots) = \Ll(0,\sqrt{2L_q(0)}\eta\Rr)$. Therefore, 
\begin{align*}
    \lim_{\eps\to0}\mathbf{I}=L'(0)\cdot \E\Ll[\nabla^2\Phi^0(\cdots)\Rr]\stackrel{\eqref{e.R(t)=A_t=}}{=} L'(0)\cdot \E [A_0].
\end{align*}
Putting together the limits of $\mathbf{I}$ and $\mathbf{II}$, we can conclude
\begin{align*}
    \frac{\d}{\d\eps}\Big|_{\eps=0}\E \Phi^\eps\Ll(0, \sqrt{L_q(0)}\eta + \sqrt{\eps L'(0)}\eta'\Rr) =  - L'(0)\cdot \alpha(0)\E\Ll[R(0)\Rr] - \int_0^\T L'(t)\cdot \E [R(t)]\d \alpha(t).
\end{align*}
This gives the derivative of the left-hand side in~\eqref{e.EPhi^eps=-psi}.
For the right-hand side, using the differentiability of $\psi$ as in~\eqref{e.psi_diff}, we have
\begin{align*}
    \frac{\d}{\d\eps}\Big|_{\eps=0} \Ll(-\psi\Ll(q+\eps q'\Rr)\Rr) \stackrel{\eqref{e.psi_diff}}{=} -\int_0^1 q'(s)\cdot p(s) \d s \stackrel{\eqref{e.aq=Lalpha(0,1]}}{=} -\int_0^1 L'\circ\alpha^{-1}(s)\cdot L_p\circ\alpha^{-1}(s) \d s 
    \\
    \stackrel{\text{L.\ref{l.law_dalpha}}}{=} - \int_0^\T L'(t)\cdot L_p(t)\d \alpha(t).
\end{align*}
Combining the two above displays, we get the desired result.
\end{proof}

Recall the definition of pinned decompositions from Subsection~\ref{s.joint_decomp}.

\begin{lemma}[Representation of $\partial_q\psi(q)$, Part~2]\label{l.rep_grad_psi_2}
Under~\ref{i.setting}, assume further that the joint decomposition is pinned and fix $X_0 =0$ (hence, $R(0)$ and $A_0$ are deterministic). 
Then, we have
\begin{gather}
    L_p(t) = R(t),\quad\forall t\in\supp \d\alpha.\label{e.L_p=R}
\end{gather}
\end{lemma}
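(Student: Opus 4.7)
The plan is to specialize Lemma~\ref{l.rep_grad_psi} to the pinned setting, extract pointwise information from the resulting integral identity by varying the test path $L'$, and upgrade the conclusion from a $\d\alpha$-almost-everywhere statement to one holding on all of $\supp\d\alpha$ by using continuity.

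Since the joint decomposition is pinned, $L_q(0)=0$, so the random initial condition $\sqrt{2L_q(0)}\,\eta=0$ of Lemma~\ref{l.rep_grad_psi} agrees with the deterministic choice $X_0=0$ made here. In particular $R(\cdot)$ and $A_\cdot$ are deterministic and the $\E_\eta$ averaging is trivial. Lemma~\ref{l.rep_grad_psi} therefore yields, for every smooth increasing $L':[0,\T]\to\S^\D_+$,
\begin{align*}
\alpha(0)\, L'(0)\cdot R(0)+\int_0^\T L'(t)\cdot R(t)\,\d\alpha(t)=\int_0^\T L'(t)\cdot L_p(t)\,\d\alpha(t).
\end{align*}

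The next step exploits the freedom in $L'$. Specialize to $L'(t)=\varphi(t)\,A$ with $A\in\S^\D_+$ and $\varphi:[0,\T]\to[0,\infty)$ smooth non-decreasing with $\varphi(0)=0$; such $L'$ is admissible, and $L'(0)=0$ kills the boundary term together with any contribution of a $\d\alpha$-atom at $0$. The identity reduces to $\int_{(0,\T]}\varphi(t)\,A\cdot(R(t)-L_p(t))\,\d\alpha(t)=0$. Differences of such $\varphi$'s span all smooth scalar functions vanishing at $0$ (write $\psi=(\psi+c\,t)-c\,t$ with $c$ large enough that $\psi+c\,t$ is non-decreasing), and these are dense in $\{h\in C([0,\T]):h(0)=0\}$. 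Since this subspace has codimension one in $C([0,\T])$ and its annihilator in the space of finite signed measures is spanned by $\delta_0$, the signed measure $A\cdot(R-L_p)\,\d\alpha$ must vanish on $(0,\T]$. Varying $A$ over $\S^\D_+$, whose linear span is $\S^\D$, yields $R(t)=L_p(t)$ for $\d\alpha$-a.e.\ $t\in(0,\T]$.

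To upgrade to $\supp\d\alpha$, I use continuity: $L_p$ is Lipschitz by Lemma~\ref{l.decomp_q}, and $R(t)=\E[\nabla\Phi\nabla\Phi^\intercal(t,X_t)]$ is continuous because $\nabla\Phi$ is continuous and bounded (Proposition~\ref{p.reg_Phi}) and the process $X$ has continuous sample paths. A continuous function that vanishes $\d\alpha$-a.e.\ must vanish at every point of $\supp\d\alpha$, because at any non-zero point of the support an open neighborhood would carry positive $\d\alpha$-mass. This extends $R=L_p$ to $\supp\d\alpha\cap(0,\T]$. If $0\in\supp\d\alpha$ and $\alpha(0)=0$, then $0$ is a limit of points in $\supp\d\alpha\cap(0,\T]$ and continuity gives $R(0)=L_p(0)=0$. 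If $\alpha(0)>0$ (in which case $0$ automatically lies in $\supp\d\alpha$ but may be isolated), I apply the identity once more with the constant test path $L'\equiv A$ and use the pointwise equality already obtained on $(0,\T]$ to deduce $\alpha(0)\,A\cdot R(0)=0$ for every $A\in\S^\D_+$, whence $R(0)=0=L_p(0)$.

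The only delicate point is the consistent handling of the boundary term $\alpha(0)\,L'(0)\,R(0)$ alongside a possible $\d\alpha$-atom at $0$; splitting the admissible test paths into those vanishing at $0$ and the constant ones cleanly disentangles the interior a.e.\ statement from the pointwise constraint at $0$, after which continuity of $R$ and $L_p$ finishes the argument.
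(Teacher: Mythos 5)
Your proof is correct and follows essentially the same route as the paper's: apply Lemma~\ref{l.rep_grad_psi} with $X_0=0$, vary the test path $L'$ to identify the measures $R\,\d\alpha$ and $L_p\,\d\alpha$, and treat a possible atom of $\d\alpha$ at $0$ separately using $L_p(0)=0$. The only difference is that you spell out the density/annihilator argument and the continuity step needed to upgrade the $\d\alpha$-a.e.\ equality to equality at every point of $\supp\d\alpha$, details the paper leaves implicit under the phrase ``varying $L'$.''
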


\begin{proof}
We apply Lemma~\ref{l.rep_grad_psi} to our setting. By taking the difference of two smooth increasing paths $L',L'':[0,\T]\to\S^\D_+$, we can get a smoothed version of $a\one_{[0,t]}$ for every $a\in \S^\D_+$ and $t\in[0,\T]$. Taking the differences of functions of this type, we can get a smoothed version of $a\one_{[t,t']}$ for $a\in \S^\D$ and $0\leq t<t'\leq \T$. This procedure yields a family of test functions rich enough to determine $\S^D$-valued measurable functions on $[0,\T]$.
We refer to this procedure as ``varying $L'$'' in the following.

Since the decomposition is pinned, we have $L_q(0)=0$. Also, now we have set $X_0=0$.
From the definition of $R(\cdot)$ in~\eqref{e.R(t)=A_t=}, we can see that $R(0)$ is deterministic and thus $\E_\eta[R(0)]= R(0)$. 

If $\alpha(0)=0$, then Lemma~\ref{l.rep_grad_psi} gives
\begin{align}\label{e.rel_R(0)=0}
    \int_0^\T L'(t)\cdot R(t)\d \alpha(t) = \int_0^\T L'(t)\cdot L_p(t)\d \alpha(t)
\end{align}
for every smooth increasing $L'$. Varying $L'$, we get~\eqref{e.L_p=R}. If $\alpha(0)>0$, then we have $0\in\supp\d\alpha$ and the mass of $\d\alpha$ at $0$ is exactly $\alpha(0)$. By varying $L'$ to test near $0$, we can get from Lemma~\ref{l.rep_grad_psi} that
\begin{align*}
    \alpha(0)R(0) + \alpha(0)R(0) = L_p(0).
\end{align*}
Since the decomposition is pinned, we have $L_p(0)=0$, which along with $\alpha(0)>0$ implies $R(0)=0$. Inserting this back to the relation in Lemma~\ref{l.rep_grad_psi}, we again get~\eqref{e.rel_R(0)=0} and thus~\eqref{e.L_p=R}.
\end{proof}

\subsection{Left endpoints of paths}

We apply Lemma~\ref{l.rep_grad_psi_2} to studying the left endpoints of $(p,q)$ satisfying $p=\partial_q \psi(q)$.
Recall that, for every $q\in\mcl Q_\infty$, we have fixed $\aq(0)=0$ as in~\eqref{e.aq=}. But due to the second relation in~\eqref{e.aq=} and right-continuous of $q$, we have
\begin{align}\label{e.q(0)=aq(0+)}
    q(0)= \aq(0+),\quad\forall q \in \mcl Q_\infty.
\end{align}
Hence, to find the value of $q(0)$, it is equivalent to determine $\aq(0+)$ and vice versa.

To describe the result in the next lemma, we need to define the following object. For every $q\in\mcl Q_\infty$, we define
\begin{align}\label{e.V(q)=}
    V(q)= \int_0^1 \E_\eta\Ll[a(s,\eta)q(0)a(s,\eta)^\intercal\Rr]\d s
\end{align}
where $\eta$ is a standard $\R^\D$-valued Gaussian vector independent of everything else and
\begin{align}\label{e.a(s,eta)=}
    a(s,\eta)= \E \la \sigma\sigma^\intercal -\sigma\sigma'^\intercal \ra_{P_1,\, q-q(0)s,\,\sqrt{2q(0)s}\eta,\, -q(0)s},\quad\forall s\in[0,1].
\end{align}
Here, the Gibbs measure is given as in~\eqref{e.f(q),<>_q=}.
Notice that $V(q)=0$ if $q(0)=0$.

\begin{lemma}[Formula for the left endpoint]\label{l.p(0)=mm+2V}
Let $p,q\in\mcl Q_\infty$ satisfy $p= \partial \psi(q)$, let $V(q)$ be given as in~\eqref{e.V(q)=}, and let $\la\cdot\ra_q$ be given as in~\eqref{e.<>_q_correct}.
We have
\begin{align}\label{e.p(0)=...}
    p(0)= \E \la \sigma\ra_{q}\E \la \sigma\ra_{q}^\intercal + 2V(q).
\end{align}
In particular, when $p(0)=q(0)=0$, we have $\E \la \sigma\ra_{q}=0$.
\end{lemma}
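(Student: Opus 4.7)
The plan is to apply Lemma~\ref{l.rep_grad_psi_2} along a specific pinned joint decomposition of $(q,p)$ in which the initial ``jumps'' of $q$ and $p$ at $s=0$ are spread out over a positive-length interval of $t$ on which $\alpha$ vanishes, and then combine it with the matrix-valued martingale identity~\eqref{e.R(t)=R(0)+2int} from Lemma~\ref{l.R,A_rel} to read off the formula for $p(0)$.

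Concretely, I would take the canonical joint decomposition $(L_q,L_p,\alpha)$ of $(q,p)$ on $[0,T]$ from Subsection~\ref{s.joint_decomp}, and set $t^\ast = \tr q(0) + \tr p(0)$. By~\eqref{e.alpha=_joint_decomp}, $\alpha^{-1}(0+) = t^\ast$, so $\alpha \equiv 0$ on $[0,t^\ast)$; the linear interpolation prescribed by~\eqref{e.L_k=_joint_decomp} between the values at $0$ and $t^\ast$ then forces
\begin{align*}
L_q(t)=\tfrac{t}{t^\ast}q(0),\qquad L_p(t)=\tfrac{t}{t^\ast}p(0),\qquad t\in[0,t^\ast].
\end{align*}
In particular the decomposition is pinned, $L_p(t^\ast)=p(0)$, and $t^\ast\in\supp\d\alpha$ since $t^\ast=\alpha^{-1}(0+)\in\overline{\alpha^{-1}((0,1])}$; the degenerate case $t^\ast=0$ I handle at the end. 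Taking $X_0=0$ in the setting of~\ref{i.setting}, Lemma~\ref{l.rep_grad_psi_2} yields $p(0)=L_p(t^\ast)=R(t^\ast)$, and~\eqref{e.R(t)=R(0)+2int} gives
\begin{align*}
R(t^\ast) = R(0) + 2\int_0^{t^\ast}\E\Ll[A_r\,\dot L_q(r)\, A_r\Rr]\,\d r.
\end{align*}
Since $L_q(0)=0$ and $X_0=0$, the cascade representation at $(0,0)$ (Lemma~\ref{l.cascade_rep}) yields $\nabla\Phi(0,0)=\E\la\sigma\ra_q$, so $R(0)=\E\la\sigma\ra_q\,\E\la\sigma\ra_q^\intercal$.

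For the integral, since $\alpha\equiv 0$ on $[0,t^\ast]$ the SDE~\eqref{e.X_t=} reduces to a pure diffusion with constant coefficient $(2q(0)/t^\ast)^{1/2}$, so $X_r=(2q(0)/t^\ast)^{1/2}W_r$ and consequently $X_{st^\ast}\stackrel{d}{=}\sqrt{2sq(0)}\,\eta$. Changing variables $r=st^\ast$ (with $\dot L_q\equiv q(0)/t^\ast$), and reading off $\pi_{st^\ast}=\overrightarrow{q}-sq(0)$ and $L_q(st^\ast)=sq(0)$ from Lemma~\ref{l.cascade_rep} (together with $\pi_{st^\ast}(1)+L_q(st^\ast)=q(1)$ in the Gibbs weight), identifies
\begin{align*}
A_{st^\ast} = \nabla^2\Phi\Ll(st^\ast,\,\sqrt{2sq(0)}\,\eta\Rr) = a(s,\eta)
\end{align*}
exactly with the matrix from~\eqref{e.a(s,eta)=}, so $2\int_0^{t^\ast}\E[A_r\dot L_q(r)A_r]\,\d r = 2V(q)$ after the change of variables. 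Combining these, $p(0)=\E\la\sigma\ra_q\,\E\la\sigma\ra_q^\intercal+2V(q)$, which is~\eqref{e.p(0)=...}. The ``in particular'' clause then follows at once: $q(0)=0$ forces $V(q)=0$ by~\eqref{e.V(q)=}, so the formula reduces to $p(0)=\E\la\sigma\ra_q\,\E\la\sigma\ra_q^\intercal$, and $p(0)=0$ then forces $\E\la\sigma\ra_q=0$ since $vv^\intercal=0$ implies $v=0$. The degenerate case $q(0)=p(0)=0$ (i.e.\ $t^\ast=0$) is handled directly: $\alpha^{-1}(0+)=0$ places $0$ in $\supp\d\alpha$, and Lemma~\ref{l.rep_grad_psi_2} at $t=0$ immediately yields $0=L_p(0)=R(0)=\E\la\sigma\ra_q\,\E\la\sigma\ra_q^\intercal$, consistent with $V(q)=0$.

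The main technical obstacle I expect is the identification $A_{st^\ast}=a(s,\eta)$: one must carefully unpack $\pi_{st^\ast}$ from Lemma~\ref{l.cascade_rep} using $\alpha^{-1}_{[st^\ast}=\alpha^{-1}$ (which follows from $\alpha\equiv 0$ on $[st^\ast,t^\ast]$), handle the boundary behavior of $\pi_{st^\ast}$ at $u=0$ where it is not naturally $S^D_+$-valued by invoking Remark~\ref{r.left-cts} to pass freely between $q$ and $\overrightarrow{q}$, and verify the algebraic identity $\pi_{st^\ast}(1)+L_q(st^\ast)=q(1)$ so that the quadratic bookkeeping in~\eqref{e.f(q),<>_q=} matches the definition of $a(s,\eta)$.
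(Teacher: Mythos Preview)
Your proof is correct and follows essentially the same route as the paper: take the canonical joint decomposition of $(p,q)$, observe that on $[0,t^\ast]$ (the paper writes $t$ for your $t^\ast$) one has $\alpha\equiv 0$ and $L_q$ linear, apply Lemma~\ref{l.rep_grad_psi_2} at $t^\ast\in\supp\d\alpha$ together with~\eqref{e.R(t)=R(0)+2int}, identify $R(0)$ via Lemma~\ref{l.R(0),A_0}, and recognize the integral as $V(q)$ by computing the law of $X_r$ and the Hessian through Lemma~\ref{l.cascade_rep}. The only organizational difference is that the paper disposes of the degenerate case $p(0)=q(0)=0$ first, whereas you handle it at the end; your observation that the identification $A_{st^\ast}\stackrel{d}{=}a(s,\eta)$ requires some care (unpacking $\pi_{st^\ast}$ and matching the quadratic term) is exactly the content of the paper's final paragraph in that proof.
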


\begin{proof}
Let $(L_p,L_q,\alpha)$ be the canonical decomposition of $(p,q)$ given as in~\eqref{e.alpha=_joint_decomp} and~\eqref{e.L_k=_joint_decomp}. 
We also assume the setting~\ref{i.setting} and fix $X_0=0$.
We consider two cases: $p(0)=q(0)=0$ or otherwise.

In the first case, we have $\ap(0+)=\aq(0+)=0$, which by $\alpha^{-1}= \tr \ap + \tr \aq$ (see~\eqref{e.alpha=_joint_decomp}) implies $\alpha^{-1}(0+)=0$. By the characterization of $\supp\d\alpha$ in Lemma~\ref{l.law_dalpha}, we have $0\in\supp\d\alpha$, which allows us to apply Lemma~\ref{l.rep_grad_psi_2} at $t=0$. Since the canonical decomposition is pinned, we have $L_p(0)=0$. Hence, we get
\begin{align*}
    0=L_p(0) \stackrel{\text{L.\ref{l.rep_grad_psi_2}}}{=} R(0) \stackrel{\text{L.\ref{l.R(0),A_0}}}{=} \E \la \sigma\ra_{q}\E \la \sigma\ra_{q}^\intercal,
\end{align*}
which implies $\E \la \sigma\ra_{q}=0$ and thus~\eqref{e.p(0)=...} holds in this case.

Now, we consider the other case when either $p(0)\neq 0$ or $q(0)\neq 0$. Set $t=\alpha^{-1}(0+)$, which lies in $\supp\d\alpha$. By the relation $\alpha^{-1} = \tr\ap + \tr \aq$ and~\eqref{e.q(0)=aq(0+)}, we have $t>0$.
Using the property~\eqref{e.aq=Lalpha(0,1]} of decompositions, we get $\ap(0+)=L_p(t)$ and $\aq(0+)= L_q(t)$.
Applying Lemma~\ref{l.rep_grad_psi_2}, \eqref{e.R(t)=R(0)+2int} in Lemma~\ref{l.R,A_rel}, and the last statement in Lemma~\ref{l.R(0),A_0}, we get
\begin{align*}
    p(0) \stackrel{\eqref{e.q(0)=aq(0+)}}{=} \ap(0+)= L_p(t) = R(t)= \E \la \sigma\ra_{q}\E \la \sigma\ra_{q}^\intercal + 2 \int_0^t \E \Ll[A_r^\intercal \dot L_q(r) A_r\Rr]\d r.
\end{align*}
The proof is complete if we can show
\begin{align}\label{e.int...=V(q)}
    \int_0^t \E \Ll[A_r^\intercal \dot L_q(r) A_r\Rr]\d r = V(q).
\end{align}
We devote the remainder of the proof to verifying this relation.

We start by identifying $\dot L_q$ on $[0,t]$. Since the canonical decomposition is pinned, we have $L_q(0)=0$. Due to $\alpha^{-1}(0+)=t$, we can use Lemma~\ref{l.alpha(t)=sup} to see $\alpha(t)=0$ and thus $\alpha(r)=0$ on $[0,t]$ via monotonicity. As a result, $(0,t)\cap \supp\d\alpha=\emptyset$. Due to the definition of the canonical decomposition in~\eqref{e.L_k=_joint_decomp}, we have that $L_q$ on $[0,t]$ is a linear interpolation between $L_q(0)$ and $L_q(t)= q(0)$. Hence, 
\begin{align}\label{e.L_q(r)=...}
    L_q(r)=q(0)r/t,\qquad \dot L_q(r) = q(0)/t,\qquad \forall r\in(0,t).
\end{align}
Now, the process $X$ given as in~\eqref{e.X_t=} satisfies $\d X_r = \sqrt{2q(0)/t}\d W_r$ on $[0,t]$. Since we have fixed $X_0=0$, we have
\begin{align}\label{e.X(r)=...}
    X_r \stackrel{\d}{=} \sqrt{2q(0)r/t}\eta,\qquad\forall r\in[0,t].
\end{align}

Next, we study $\nabla^2\Phi(r,\cdot)$ for $r\in[0,t]$ by using the representation in Lemma~\ref{l.cascade_rep}. 
Let $\pi_r$ and $\alpha_{[r}$ be given as in that lemma.
Due to $\alpha=0$ on $[0,t]$, we have $\alpha_{[r}= \alpha$ for every $r\in[0,t]$ and thus $\pi_r = L_q\circ\alpha^{-1}-L_q(r) = \aq - \aq(0)r/t$ on $(0,t]$, where we used $L_q\circ\alpha^{-1}=\aq$ on $(0,1]$ due to~\eqref{e.aq=Lalpha(0,1]}.
Also, recall the first relation in~\eqref{e.L_q(r)=...} and the definition of the Gibbs measure in~\eqref{e.f(q),<>_q=}. Therefore, the natural Gibbs measure associated with the representation of $\Phi(r,x)$ for $r\in[0,t]$ given by Lemma~\ref{l.cascade_rep} is $\la\cdot\ra_{P_1,\, q-q(0)r/t,\, x,\, -q(0)r/t}$. Hence, we can compute
\begin{align*}
    \nabla^2\Phi(r,x) = \E \la \sigma\sigma^\intercal - \sigma\sigma'^\intercal\ra_{P_1,\, q-q(0)r/t,\, x,\, -q(0)r/t},\quad\forall (r,x)\in[0,t]\times \R^\D.
\end{align*}
Recall the definition of $A_\cdot$ in~\eqref{e.R(t)=A_t=}. Using the above display and~\eqref{e.X(r)=...}, we can thus verify that $A_r \stackrel{\d}{=} a(r/t,\eta)$ for $a(\cdot,\cdot)$ given in~\eqref{e.a(s,eta)=}. This along with~\eqref{e.L_q(r)=...} gives
\begin{align*}
    \int_0^t \E \Ll[A_r^\intercal \dot L_q(r) A_r\Rr]\d r = \frac{1}{t}\int_0^t \E_\eta \Ll[a(r/t,\eta)^\intercal q(0)a(r/t,\eta)\Rr]\d r,
\end{align*}
which is equal to $V(q)$ given in~\eqref{e.V(q)=} after a change of variable. This verifies~\eqref{e.int...=V(q)} and completes the proof.
\end{proof}

We can view $\E \la \sigma\ra_{q}$ as the mean magnetization associated with the Gibbs measure $\la\cdot\ra_q$. If $p$ represents the Parisi measure, then $p(0)$ is the smallest point in its support. The following corollary is an easy consequence of Lemma~\ref{l.p(0)=mm+2V}. It states that if the mean magnetization under $\la \cdot\ra_{q+t\nabla\xi(p)}$ is not zero, then $0$ is not in the support of the Parisi measure. Recall that we know \cite{auffinger2015properties} in the setting with Ising spins and no external field that $0$ is always in the support of the Parisi measure (there, because of $D=1$ and the Ising setup, $\E\la \sigma\ra_q=0$ always holds). 

\begin{corollary}
Let $p,q\in\mcl Q_\infty$ satisfy $p=\partial_q \psi(q+t\nabla\xi(p))$ for some $t\geq 0$ and $\xi$. Assume $\nabla\xi(0)=0$ and $q(0)=0$. 
If $\E \la\sigma\ra_{q+t\nabla\xi(p)} \neq 0$, then $p(0)\neq 0$.
\end{corollary}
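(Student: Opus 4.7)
The plan is to argue by contrapositive directly from Lemma~\ref{l.p(0)=mm+2V}, applied to the pair $(p,q')$ where $q' := q + t\nabla\xi(p) \in \mcl Q_\infty$. Since the hypothesis $p = \partial_q \psi(q')$ puts us exactly in the setting of that lemma, it remains only to check that the ``in particular'' clause applies, i.e.\ that $q'(0)=0$ whenever $p(0)=0$.

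Concretely, assume for contradiction that $p(0)=0$. Since $q'$ is right-continuous and $\nabla\xi$ is smooth, $q'(0) = q(0) + t\nabla\xi(p(0))$; the assumptions $q(0)=0$, $p(0)=0$, and $\nabla\xi(0)=0$ combine to give $q'(0)=0$. Thus $p(0)=q'(0)=0$, and the second assertion of Lemma~\ref{l.p(0)=mm+2V} (applied to $(p,q')$) forces $\E\la\sigma\ra_{q'}=0$, contradicting the assumption $\E\la\sigma\ra_{q+t\nabla\xi(p)} \neq 0$. Reading this as the contrapositive yields $p(0)\neq 0$, as claimed. There is no real obstacle here beyond the bookkeeping to identify the correct $q'$; all of the substantive content is already packaged inside Lemma~\ref{l.p(0)=mm+2V}, whose proof in turn relied on the representation of $p(0)$ as a sum of two positive semidefinite matrices (the rank-one term $\E\la\sigma\ra\E\la\sigma\ra^\intercal$ and the term $2V(q')$ which automatically vanishes when $q'(0)=0$).
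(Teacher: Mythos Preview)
Your proposal is correct and takes essentially the same approach as the paper, which simply states that the corollary is ``an easy consequence of Lemma~\ref{l.p(0)=mm+2V}'' without writing out the details; your contrapositive argument via $q'=q+t\nabla\xi(p)$ and the ``in particular'' clause of that lemma is exactly what is intended.
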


\subsection{Detection of increments}
\label{s.detect}

In the previous subsection, through Lemma~\ref{l.p(0)=mm+2V}, we have gained understanding of $p(0)$ and $q(0)$. Now, we turn to investigating the increments of the two paths at $s\in(0,1)$.

In the general vector case $D\geq 1$, we need to first describe non-trivial directions of the spin distributing $P_1$ and clarify its relation to the strict convexity of $\Phi$.

\begin{lemma}[Non-trivial directions along the Hessian]\label{l.convex_Phi}
Let $(L,\alpha)$ be a decomposition of some $q\in \mcl Q_\infty$, let $\Phi$ be associated with $(L,\alpha)$, and let $\phi$ be the terminal condition of $\Phi$ given as in~\eqref{e.phi=initial}.
Then, the following holds:
\begin{enumerate}
    \item \label{i.P_1_not_Dirac} If $P_1$ is not a Dirac mass, then there exists $z\in\R^\D$ such that
    \begin{align}\label{e.znabla^2Phi(x)z>0}
        z^\intercal \nabla^2 \phi(x)z >0,\qquad\forall x\in\R^\D.
    \end{align}

    \item\label{i.P_1_span}

    If $\supp P_1$ spans $\R^\D$, then~\eqref{e.znabla^2Phi(x)z>0} holds for every non-zero $z\in \R^\D$.
    
    \item \label{i.l.convex_Phi_2} For every $z\in \R^\D$ satisfying~\eqref{e.znabla^2Phi(x)z>0}, it holds that $z^\intercal \nabla^2 \Phi(t,x)z >0$ for every $(t,x)\in [0,1]\times\R^\D$.
\end{enumerate}
\end{lemma}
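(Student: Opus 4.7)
The plan is to reduce all three statements to a single observation about the distribution $P_1$, expressed through the cascade representation of $\Phi$.

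First I would use the cascade representation in Lemma~\ref{l.cascade_rep}, which gives
\begin{equation*}
\Phi(t,x) = \E \log \iint \exp\bigl(\sqrt 2 \sigma \cdot w^{\pi_t}(\brho) + \sigma \cdot x - \sigma\sigma^\intercal \cdot (\pi_t(1)+L(t))\bigr)\,\d P_1(\sigma)\,\d\mfk R(\brho).
\end{equation*}
Differentiating twice in $x$ (this is permitted by Proposition~\ref{p.reg_Phi}) yields
\begin{equation*}
\nabla^2 \Phi(t,x) \;=\; \E\bigl[\la \sigma\sigma^\intercal \ra - \la \sigma\ra \la \sigma\ra^\intercal\bigr],
\end{equation*}
where $\la\cdot\ra = \la\cdot\ra_{P_1,\pi_t,x,-L(t)}$ is the Gibbs measure from \eqref{e.f(q),<>_q=}. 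Consequently, for every $z\in\R^D$,
\begin{equation*}
z^\intercal \nabla^2 \Phi(t,x) z \;=\; \E\bigl[\var_{\la\cdot\ra}(z\cdot\sigma)\bigr].
\end{equation*}
The same computation applied to the terminal condition $\phi$ (which corresponds to the Gibbs measure on $\sigma$ alone with density proportional to $\exp(\sigma\cdot x - \sigma\sigma^\intercal\cdot L(T))$ w.r.t.\ $P_1$) gives $z^\intercal\nabla^2\phi(x) z = \var_{\nu_x}(z\cdot\sigma)$ with $\d \nu_x \propto \exp(\sigma\cdot x - \sigma\sigma^\intercal\cdot L(T))\,\d P_1$.

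The next step is the key reduction. Both $\nu_x$ and the $\sigma$-marginal of $\la\cdot\ra$ are absolutely continuous w.r.t.\ $P_1$ with strictly positive bounded densities (the density in the cascade case is $Z^{-1}\int \exp(\cdots)\,\d\mfk R(\brho)$, which is positive for every realization of $\mfk R$). Hence the $\sigma$-supports of $\nu_x$ and of $\la\cdot\ra$ both equal $\supp P_1$. It follows that, for a given $z\in\R^D$, the following are equivalent:
\begin{enumerate}
    \item $\sigma\mapsto z\cdot\sigma$ is not constant on $\supp P_1$;
    \item $\var_{\nu_x}(z\cdot\sigma)>0$ for every $x\in\R^D$;
    \item $\var_{\la\cdot\ra}(z\cdot\sigma)>0$ almost surely for every $(t,x)\in[0,T]\times\R^D$.
\end{enumerate}
Since the variance in (iii) is nonnegative and positive on an event of full measure, $\E[\var_{\la\cdot\ra}(z\cdot\sigma)]>0$, which via the formula above is precisely $z^\intercal\nabla^2\Phi(t,x) z>0$. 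Likewise (ii) says exactly $z^\intercal\nabla^2\phi(x)z>0$ for all $x$.

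With this equivalence, the three parts become immediate. For \eqref{i.P_1_not_Dirac}: if $P_1$ is not a Dirac mass, its support contains at least two distinct points, so any $z$ not orthogonal to the difference makes $z\cdot\sigma$ non-constant on $\supp P_1$, and (ii) gives \eqref{e.znabla^2Phi(x)z>0}. For \eqref{i.P_1_span}: when $\supp P_1$ affinely spans $\R^D$, no non-zero $z$ can make $z\cdot\sigma$ constant on $\supp P_1$ (otherwise $\supp P_1$ would lie in a proper affine hyperplane), so (ii) holds for every non-zero $z$. For \eqref{i.l.convex_Phi_2}: the hypothesis \eqref{e.znabla^2Phi(x)z>0} is (ii), which is equivalent to (i), which in turn is equivalent to (iii), and (iii) is the desired conclusion. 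The main obstacle, which is genuinely handled by the cascade representation, is that one might try a direct induction on the Parisi recursion \eqref{e.Phi(s,x)} and then pass to the limit for general $(L,\alpha)$; but strict positivity is not preserved under uniform limits, so the approximation route is awkward. The cascade approach bypasses this entirely by giving a formula valid for all $(L,\alpha)$ that manifestly exhibits the strict positivity.
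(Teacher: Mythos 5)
Your proof is correct. For Parts~\eqref{i.P_1_not_Dirac} and~\eqref{i.P_1_span} your argument coincides with the paper's: both identify $z^\intercal\nabla^2\phi(x)z$ with the variance of $z\cdot\sigma$ under a tilted measure equivalent to $P_1$, and conclude from the constancy (or not) of $z\cdot\sigma$ on $\supp P_1$. For Part~\eqref{i.l.convex_Phi_2}, however, you take a genuinely different route. The paper runs the characteristic process $X$ from $(t,x)$ and combines~\eqref{e.R(t)=R(0)+2int} and~\eqref{e.E[nabla^2Phi]=} of Lemma~\ref{l.R,A_rel} to obtain the monotonicity $\nabla^2\Phi(t,x)=\E[A_t]\ge \E[A_\T]=\E\bigl[\nabla^2\phi(X_\T)\bigr]$, so strict positivity at the terminal time propagates backwards. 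You instead differentiate the cascade representation of Lemma~\ref{l.cascade_rep} twice in $x$ to write $z^\intercal\nabla^2\Phi(t,x)z=\E\bigl[\var_{\la\cdot\ra}(z\cdot\sigma)\bigr]$ and observe that the $\sigma$-marginal of the random Gibbs measure is mutually absolutely continuous with $P_1$ (its density with respect to $P_1$ is a positive, a.s.\ finite cascade integral), hence has the same support, so the variance is a.s.\ strictly positive whenever $z\cdot\sigma$ is non-constant on $\supp P_1$. Both arguments are valid; yours is more direct and unifies all three parts under one equivalence, at the cost of leaning on the a.s.\ finiteness and positivity of the cascade normalization (which the paper's framework already presupposes), while the paper's SDE argument yields as a by-product the quantitative comparison $\nabla^2\Phi(t,x)\ge\E[\nabla^2\phi(X_\T)]$, a matrix inequality that is stronger than the scalar positivity statement and of independent use.
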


\begin{proof}
In view of the definition of $\phi$ in~\eqref{e.phi=initial}, we can compute, for $x,z\in\R^\D$,
\begin{align*}
    z\cdot \nabla^2\phi(x)z = \frac{\d^2}{\d\eps^2}\Big|_{\eps=0}\phi(x+\eps z) = \la \Ll(\sigma\cdot z -\la \sigma\cdot z\ra_x\Rr)^2 \ra_x
\end{align*}
where $\la\cdot\ra_x \propto \exp\Ll(\sigma\cdot x - \sigma\sigma^\intercal\cdot L(T)\Rr)\d P_1(\sigma)$. To prove~\eqref{i.P_1_not_Dirac}, we argue by contradiction and suppose that for every $z$ there is $x_z$ such that $z^\intercal \nabla^2\phi(x_z)z =0$. Then, the above display implies that $\sigma\cdot z = \la\sigma\cdot z\ra_{x_z}$ a.s.\ under $\la\cdot\ra_{x_z}$ which thus also holds a.s.\ under $P_1$. Varying $z$, we deduce that $\sigma$ is constant a.s.\ under $P_1$ reaching a contradiction. Hence, the first part is verified.

For Part~\eqref{i.P_1_span}, we again argue by contradiction and suppose that there are $z$ and $x$ such that $z^\intercal \nabla^2\phi(x_z)z =0$. Again, we get $\sigma\cdot z = \la\sigma\cdot z\ra_x$ a.s.\ under $P_1$. Hence, $\supp P_1$ is contained in an affine plane, reaching a contradiction.

For Part~\eqref{i.l.convex_Phi_2}, we fix any $(t,x)$. We assume the setting~\ref{i.setting} but
redefine $(X_r)_{r\in[t,\T]}$ to be the strong solution of the SDE in~\eqref{e.X_t=} but with initial condition at $t$ satisfying $X_t=x$. Then computation in the proof of Lemma~\ref{l.R,A_rel} still holds. Combining~\eqref{e.R(t)=R(0)+2int} and~\eqref{e.E[nabla^2Phi]=}, we get
\begin{align*}
    \E [A_t]=\E[A_\T] + 2\int_t^\T\alpha(r)\E \Ll[A^\intercal_r\dot L(r)A_r\Rr]\d r \geq \E [A_\T]
\end{align*}
where $A_\cdot$ is defined as in~\eqref{e.R(t)=A_t=} but with the new $X$. This implies $\nabla^2\Phi(t,x)\geq \E \Ll[\nabla^2\phi(X_\T)\Rr]$, which along with~\eqref{e.znabla^2Phi(x)z>0} yields the desired result.
\end{proof}

We need a simple lemma on matrices.

\begin{lemma}\label{l.A>0=>A>zz}
For $A\in\S^\D_+$, we have $A\neq 0$ if and only if $A\geq zz^\intercal$ for some nonzero $z\in \R^\D$.
\end{lemma}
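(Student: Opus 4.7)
The plan is to prove the two implications of this matrix equivalence using spectral decomposition.

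For the easy direction ($\Leftarrow$), suppose $A \geq zz^\intercal$ for some nonzero $z \in \mathbb{R}^D$. I would argue by contradiction: if $A = 0$ then $-zz^\intercal \in S^D_+$. But $\tr(zz^\intercal) = |z|^2 > 0$ since $z \neq 0$, so $\tr(-zz^\intercal) < 0$, contradicting positive semidefiniteness. Hence $A \neq 0$.

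For the nontrivial direction ($\Rightarrow$), the key tool is the spectral theorem. Given $A \in S^D_+$ with $A \neq 0$, I would diagonalize $A = \sum_{i=1}^D \lambda_i v_i v_i^\intercal$ with an orthonormal basis $(v_i)_{i=1}^D$ and eigenvalues $\lambda_1 \geq \cdots \geq \lambda_D \geq 0$. Since $A \neq 0$, at least one eigenvalue is strictly positive, so $\lambda_1 > 0$. Set $z = \sqrt{\lambda_1}\, v_1$, which is nonzero. Then $zz^\intercal = \lambda_1 v_1 v_1^\intercal$, and therefore
\begin{equation*}
A - zz^\intercal = \sum_{i=2}^D \lambda_i v_i v_i^\intercal \in S^D_+,
\end{equation*}
which is exactly the statement $A \geq zz^\intercal$.

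There is no real obstacle here; the only thing to be careful about is not to confuse $A \neq 0$ (as an element of $S^D$) with $A$ being positive definite. The statement only requires producing one rank-one PSD matrix below $A$, which the top eigenpair automatically supplies.
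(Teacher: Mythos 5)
Your proof is correct and follows exactly the paper's argument: diagonalize $A$ by the spectral theorem and take $z = \sqrt{\lambda_1}\, v_1$ for the top eigenpair. The paper dismisses the converse direction as obvious, whereas you spell it out via the trace; otherwise the two proofs coincide.
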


\begin{proof}
Since $A \in \S^\D_+$, there exist $\lambda_1 \ge \cdots \ge \lambda_D \ge 0$ and $(v_1, \ldots, v_D)$ an orthonormal basis of $\R^D$ such that $A = \sum_{d = 1}^D \lambda_d v_d v_d^\intercal$. If $A \neq 0$, then $\lambda_1 \neq 0$ and we can choose $z = \sqrt{\lambda_1} v_1$. The converse implication is obvious. \end{proof}

With the above preparation, we are ready prove results on detecting increments of paths.

\begin{lemma}[Detection of a general increment]\label{l.detect_gen_incre}
Let $p,q\in\mcl Q_\infty$ satisfy $p=\partial_q \psi(q)$, assume that $\supp P_1$ spans $\R^\D$, and let $0<s<s'<1$. We have that $\ap(s')\neq \ap(s)$ if and only if $\aq(s')\neq \aq(s)$.
\end{lemma}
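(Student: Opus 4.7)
My plan is to use the canonical joint decomposition of $(p,q)$ to translate the question about the increments of $\ap$ and $\aq$ into a comparison of two positive-semidefinite matrix-valued integrals on a subinterval of $[0,\T]$, and then to exploit the strict convexity of the Parisi PDE solution (Lemma~\ref{l.convex_Phi}) to show that these two integrals vanish simultaneously.

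Concretely, I would take $(L_p,L_q,\alpha)$ to be the canonical joint decomposition of $(p,q)$, which by construction is pinned, and let $\Phi$, $X$ be associated with $(L_q,\alpha)$ with $X_0 = 0$; the objects $R(\cdot)$ and $A_\cdot$ are then defined as in~\eqref{e.R(t)=A_t=}. Lemma~\ref{l.rep_grad_psi_2} provides the crucial identification $L_p(t) = R(t)$ for every $t \in \supp\d\alpha$. For the given $0<s<s'<1$, set $t = \alpha^{-1}(s)$ and $t' = \alpha^{-1}(s')$; both lie in $\supp\d\alpha$ by Lemma~\ref{l.law_dalpha}. If $t = t'$, then $\aq(s) = \aq(s')$ and $\ap(s) = \ap(s')$ automatically and the equivalence is trivial, so assume $t<t'$. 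Using~\eqref{e.aq=Lalpha(0,1]} and~\eqref{e.R(t)=R(0)+2int} would then yield
\begin{align*}
    \aq(s') - \aq(s) &= L_q(t') - L_q(t) = \int_t^{t'} \dot L_q(r)\,\d r,\\
    \ap(s') - \ap(s) &= L_p(t') - L_p(t) = R(t') - R(t) = 2\int_t^{t'} \E\bigl[A_r\dot L_q(r)A_r\bigr]\,\d r,
\end{align*}
and both integrands take values in $\S^\D_+$.

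The ``only if'' direction is immediate: if $\aq(s) = \aq(s')$, then $\dot L_q = 0$ a.e.\ on $[t,t']$ and hence $\E[A_r\dot L_q(r)A_r] = 0$ a.e.\ as well, giving $\ap(s) = \ap(s')$. For the converse, the key input is that since $\supp P_1$ spans $\R^\D$, parts~\eqref{i.P_1_span} and~\eqref{i.l.convex_Phi_2} of Lemma~\ref{l.convex_Phi} together yield $\nabla^2\Phi(r,x) \in \S^\D_{++}$ for every $(r,x)$, so $A_r$ is always positive definite and in particular invertible. This forces $A_r\dot L_q(r) A_r = 0 \Leftrightarrow \dot L_q(r) = 0$ pointwise in $r$; combined with the elementary fact that an $\S^\D_+$-valued random matrix has zero expectation iff it vanishes almost surely (take the trace), we obtain $\E[A_r\dot L_q(r) A_r] = 0 \Leftrightarrow \dot L_q(r) = 0$ for each $r$. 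Integrating over $[t,t']$, the second display above vanishes iff the first does.

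The only real obstacle I anticipate is confirming that the hypothesis ``$\supp P_1$ spans $\R^\D$'' is strong enough to trigger the positive-definite conclusion of Lemma~\ref{l.convex_Phi}~\eqref{i.P_1_span}; beyond that, the argument is a clean bookkeeping combination of the matrix identity~\eqref{e.R(t)=R(0)+2int} with the equality $L_p = R$ on $\supp\d\alpha$.
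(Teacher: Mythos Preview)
Your proposal is correct and follows essentially the same route as the paper's proof: canonical joint decomposition, the identification $L_p=R$ on $\supp\d\alpha$ from Lemma~\ref{l.rep_grad_psi_2}, then the integral identity from~\eqref{e.R(t)=R(0)+2int} combined with the almost-sure invertibility of $A_r$ coming from Lemma~\ref{l.convex_Phi}. Your treatment of the converse direction via the trace argument (an $\S^\D_+$-valued integrand with vanishing integral must vanish a.e., and $\E[Y]=0$ for $Y\in\S^\D_+$ forces $Y=0$ a.s.) is slightly cleaner than the paper's version, which passes through a countable dense set $(x_n)\subset\R^\D$ to reach the same conclusion; both arguments are equivalent in substance.
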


\begin{proof}
Instead of the original statement, we show that $\ap(s')=\ap(s)$ if and only if $\aq(s')=\aq(s)$.
Before proceeding into the proof, we need some preparation.
Let $(L_p,L_q,\alpha)$ be the canonical joint decomposition of $p$ and $q$ given in~\eqref{e.alpha=_joint_decomp} and~\eqref{e.L_k=_joint_decomp}. 
Adopt the setting~\ref{i.setting} for the pair $(p,q)$.
By Lemma~\ref{l.law_dalpha}, we have $\alpha^{-1}(s),\alpha^{-1}(s')\in\supp\d\alpha$.
Applying Lemma~\ref{l.rep_grad_psi_2} to the pair $(p,q)$ and Lemma~\ref{l.R,A_rel}, we get 
\begin{align}\label{e.p(s')-p(s)=...=}
    \ap(s')-\ap(s) \stackrel{\eqref{e.aq=Lalpha(0,1]}}{=} L_p\circ\alpha^{-1}(s') - L_p\circ\alpha^{-1}(s) =2\int_{\alpha^{-1}(s)}^{\alpha^{-1}(s')} \E \Ll[A_r^\intercal \dot L_q(r) A_r\Rr]\d r.
\end{align}
By~\eqref{e.aq=Lalpha(0,1]}, we also have
\begin{align}\label{e.q(s')-q(s)=L...}
    \aq(s')-\aq(s) = L_q\circ\alpha^{-1}(s') - L_q\circ\alpha^{-1}(s).
\end{align}

First, assume that $\aq(s')=\aq(s)$. Then, from~\eqref{e.q(s')-q(s)=L...}, we can deduce that either $\alpha^{-1}(s')=\alpha^{-1}(s)$ or $\dot L_q =0$ a.e.\ on $\Ll[\alpha^{-1}(s),\,\alpha^{-1}(s')\Rr]$. In either case, \eqref{e.p(s')-p(s)=...=} implies $\ap(s')=\ap(s)$.

Next, assume $\ap(s')=\ap(s)$. If $\alpha^{-1}(s')=\alpha^{-1}(s)$, then we have $\aq(s')=\aq(s)$ from~\eqref{e.q(s')-q(s)=L...} as desired. Hence, we further assume $\alpha^{-1}(s')>\alpha^{-1}(s)$ and show $\dot L_q =0$ a.e.\ on $\Ll[\alpha^{-1}(s),\,\alpha^{-1}(s')\Rr]$. Fix a sequence $(x_n)_{n\in\N}$ in $\R^\D$ that forms a dense subset of $\R^\D$. The assumption $\ap(s')=\ap(s)$ together with~\eqref{e.p(s')-p(s)=...=} implies
\begin{align*}
    \int_{\alpha^{-1}(s)}^{\alpha^{-1}(s')}  x_n^\intercal\E\Ll[A_r^\intercal \dot L_q(r) A_r\Rr]x_n\d r =0,\quad\forall n\in\N.
\end{align*}
Write $I=\Ll[\alpha^{-1}(s),\,\alpha^{-1}(s')\Rr]$. For each $n\in\N$, let $I_n$ be the measurable subset of $I$ satisfying $\Leb(I\setminus I_n)=0$ and $x_n^\intercal\E\Ll[A_r^\intercal \dot L_q(r) A_r\Rr]x_n=0$ for every $r\in I_n$. Taking $I_\star = \bigcap_{n\in\N}I_n$, we have $\Leb(I\setminus I_\star) =0$. Fix any $r\in I_\star$. For every $n$, let $\Omega_{r,n}$ be the full measure set on which $x_n^\intercal A_r^\intercal \dot L_q(r) A_r x_n=0$ (we can do so because this quantity is nonnegative). Take $\Omega_{r,\star} =\bigcap_{n\in\N}\Omega_{r,n}$ which is still a full measure set. The density of $(x_n)_{n\in\N}$ implies that, on $\Omega_{r,\star}$, we have
$x^\intercal A_r^\intercal \dot L_q(r) A_r x=0$ for every $x\in \R^\D$. Recall the definition of $A_r$ from~\eqref{e.R(t)=A_t=}. The assumption on $\supp P_1$ allows us to apply Lemma~\ref{l.convex_Phi} to see that $A_r\in\S^\D_{++}$ and thus $A_r$ is an invertible matrix a.s.
Hence, we can assume that $A_r$ is invertible on $\Omega_{r,\star}$. Fixing any realization from $\Omega_{r,\star}$ and varying $x\in\R^\D$, we deduce that $y^\intercal \dot L_q(r) y=0$ for every $y \in \R^D$, which implies $L_q(r)=0$ for $r\in I_\star$. Inserting this back to~\eqref{e.p(s')-p(s)=...=}, we see that the integral therein is zero and thus $\ap(s')=\ap(s)$. This completes the proof.
\end{proof}

Next, we give a formula for the increment at discontinuities.

\begin{lemma}[Formula at a discontinuity point]\label{l.jump_formula}

Under~\ref{i.setting}, further assume that the decomposition is canonical and fix $X_0=0$.
If either $\aq(s+)\neq\aq(s)$ or $\ap(s+)\neq\ap(s)$ at some $s\in (0,1)$, then there are $t,t_\star\in\supp\d\alpha$ satisfying~\eqref{e.t,t_star_prop} and
\begin{gather*}
    \ap(s+)-\ap(s) = \frac{2}{t_\star-t}\int_t^{t_\star} \E \Ll[A_r^\intercal \Ll(\aq(s+)-\aq(s)\Rr)A_r\Rr] \d r .
\end{gather*}

\end{lemma}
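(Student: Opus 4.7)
The strategy is to combine four already-established results: Lemma~\ref{l.equival_jumps} to extract the values $t$ and $t_\star$ from the jump; Lemma~\ref{l.rep_grad_psi_2} to identify $L_p$ with $R$ on $\supp\d\alpha$; the integral formula~\eqref{e.R(t)=R(0)+2int} to expand the increment $R(t_\star)-R(t)$; and the canonical-decomposition prescription~\eqref{e.L_k=_joint_decomp} to compute $\dot L_q$ explicitly on $(t,t_\star)$.

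First, because $L_p$ and $L_q$ are continuous (Lipschitz by the joint analogue of Lemma~\ref{l.decomp_q}), the identities $\aq = L_q\circ\alpha^{-1}$ and $\ap = L_p\circ\alpha^{-1}$ on $(0,1]$ show that any jump of $\aq$ or $\ap$ at $s$ forces $\alpha^{-1}(s+) > \alpha^{-1}(s)$. Applying Lemma~\ref{l.equival_jumps} produces $t = \alpha^{-1}(s)$ and $t_\star = \alpha^{-1}(s+)$ in $\{0\}\cup\supp\d\alpha$ satisfying~\eqref{e.t,t_star_prop}. Since $s>0$, we have $\alpha(t)=s>0$; in the potential case $t=0$ this forces $\d\alpha(\{0\})=\alpha(0)>0$, hence $0\in\supp\d\alpha$. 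Thus $t\in\supp\d\alpha$ and $t_\star\in\supp\d\alpha$ in all cases.

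Second, because the canonical joint decomposition is pinned (Lemma~\ref{l.decomp_q} applied to $\boldsymbol q$) and $X_0=0$, Lemma~\ref{l.rep_grad_psi_2} yields $L_p(t)=R(t)$ and $L_p(t_\star)=R(t_\star)$. Continuity of $L_p$ together with $\ap = L_p\circ\alpha^{-1}$ on $(0,1]$ then gives $\ap(s)=L_p(t)$ and $\ap(s+)=L_p(t_\star)$. Combining these identities with~\eqref{e.R(t)=R(0)+2int} produces
\begin{align*}
\ap(s+)-\ap(s) \;=\; R(t_\star)-R(t) \;=\; 2\int_t^{t_\star}\E\bigl[A_r^\intercal\dot L_q(r)A_r\bigr]\,\d r.
\end{align*}

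Third, since $(t,t_\star)\cap\supp\d\alpha=\emptyset$, every $r\in(t,t_\star)$ has $t^{\mathrm l}=t$ and $t^{\mathrm r}=t_\star$ in the sense of~\eqref{e.t^lt^r}, so the second line of~\eqref{e.L_k=_joint_decomp} makes $L_q$ affine on $[t,t_\star]$ with constant derivative
\begin{align*}
\dot L_q(r) \;=\; \frac{L_q(t_\star)-L_q(t)}{t_\star-t} \;=\; \frac{\aq(s+)-\aq(s)}{t_\star-t},
\end{align*}
where the second equality uses continuity of $L_q$ and $\aq=L_q\circ\alpha^{-1}$ on $(0,1]$. Substituting this expression into the previous display yields the claimed identity. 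The main point requiring care is the verification that both $t$ and $t_\star$ lie in $\supp\d\alpha$ (not merely in $\{0\}\cup\supp\d\alpha$), which is what allows Lemma~\ref{l.rep_grad_psi_2} to be invoked at both endpoints; beyond this, the argument is a direct chain of the four lemmas.
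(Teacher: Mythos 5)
Your proof is correct and follows essentially the same route as the paper's: extract $t=\alpha^{-1}(s)$ and $t_\star=\alpha^{-1}(s+)$ via Lemma~\ref{l.equival_jumps}, identify $\ap(s+)-\ap(s)=R(t_\star)-R(t)$ via Lemma~\ref{l.rep_grad_psi_2} and \eqref{e.R(t)=R(0)+2int}, and compute $\dot L_q$ on $(t,t_\star)$ from the linear interpolation in the canonical decomposition. Your extra care in checking $t,t_\star\in\supp\d\alpha$ (rather than merely $\{0\}\cup\supp\d\alpha$) matches the paper's one-line appeal to Lemma~\ref{l.law_dalpha} and is a welcome clarification.
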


\begin{proof}
Due to $\ap=L_p\circ\alpha^{-1}$ and $\aq=L_q\circ\alpha^{-1}$ on $(0,1]$ (see~\eqref{e.aq=Lalpha(0,1]}), the assumption on the discontinuity of either of them at $s$ implies $\alpha^{-1}(s+) - \alpha^{-1}(s)>0$. By Lemma~\ref{l.equival_jumps}, there are $t,t_\star\in\{0\}\cup\supp\d\alpha$ such that~\eqref{e.t,t_star_prop} is satisfied. Here, $t,t'\in\supp\d\alpha$ because $s>0$ (see Lemma~\ref{l.law_dalpha}). 
In particular, we have $t=\alpha^{-1}(s)$, $t_\star = \alpha^{-1}(s+)$. Using this, $\ap=L_p\circ\alpha^{-1}$ and $\ap=L_q\circ\alpha^{-1}$ on $(0,1]$, we get
\begin{align*}
    \ap(s+)=L_p(t_\star),\qquad \ap(s)=L_p(t),\qquad \aq(s+)=L_q(t_\star),\qquad \aq(s)=L_q(t).
\end{align*}
Using Lemma~\ref{l.rep_grad_psi_2} and Lemma~\ref{l.R,A_rel}, we can get
\begin{align*}
     \ap(s+)-\ap(s)= R(t_\star) - R(t)= 2\int_t^{t_\star}\E \Ll[A_r^\intercal \dot L_q(r)A_r\Rr] \d r. \end{align*}
Next, we determine $\dot L_q(r)$ for $r\in[t,t_\star]$. Recall the definition of $L_q$ in~\eqref{e.L_k=_joint_decomp}. Since $t,t_\star\in \supp\d\alpha$ and $(t,t_\star)\cap\supp\d \alpha=\emptyset$, we can see that $L_q$ on $(t,t_\star)$ is defined as a linear interpolation from $L_q(t)$ to $L_q(t_\star)$. Hence, we have
\begin{align*}
    \dot L_q(r) = \frac{L_q(t_\star)-L_q(t)}{t_\star-t} = \frac{\aq(s+)-\aq(s)}{t_\star-t},\quad\forall r\in(t,t_\star).
\end{align*}
Inserting this to the above display, we get the desired result.
\end{proof}

Using the above lemma, we are able to detect discontinuity points of paths.

\begin{corollary}[Detection of discontinuity]\label{c.jump_iff}
Let $p,q\in\mcl Q_\infty$ satisfy $p=\partial_q\psi(q)$ and let $s\in(0,1)$. If $\ap(s+)\neq\ap(s)$, then $\aq(s+)\neq\aq(s)$. Conversely, if $\aq(s+)-\aq(s)\geq zz^\intercal$ for some $z\in\R^\D$ satisfying~\eqref{e.znabla^2Phi(x)z>0}, then $z^\intercal \Ll(\ap(s+)-\ap(s)\Rr)z>0$. 

Under the assumption that $\supp P_1$ spans $\R^\D$, we have $\ap(s+)\neq\ap(s)$ if and only if $\aq(s+)\neq\aq(s)$.
\end{corollary}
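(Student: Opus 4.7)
My plan is to route everything through the integral identity of Lemma~\ref{l.jump_formula}. Set up: fix $s\in(0,1)$, take $(L_p,L_q,\alpha)$ to be the canonical joint decomposition of $(p,q)$, and adopt setting~\ref{i.setting} with $X_0=0$. Then whenever \emph{either} $\aq$ or $\ap$ has a jump at $s$, Lemma~\ref{l.jump_formula} supplies $t,t_\star\in\supp\d\alpha$ with $t<t_\star$ and
\[
\ap(s+)-\ap(s)=\frac{2}{t_\star-t}\int_t^{t_\star}\E\Ll[A_r^\intercal\Ll(\aq(s+)-\aq(s)\Rr)A_r\Rr]\d r.
\]
Both implications, and ultimately the biconditional, will be read off from this formula combined with Lemma~\ref{l.convex_Phi}~\eqref{i.l.convex_Phi_2}.

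\textbf{First implication.} I would argue by contrapositive: suppose $\aq(s+)=\aq(s)$. If in addition $\ap(s+)\neq\ap(s)$, the hypothesis of Lemma~\ref{l.jump_formula} is satisfied, and the displayed formula forces $\ap(s+)-\ap(s)=0$, a contradiction. Hence $\ap(s+)=\ap(s)$.

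\textbf{Second implication.} Assume now that $\aq(s+)-\aq(s)\geq zz^\intercal$ for some $z\in\R^D$ satisfying \eqref{e.znabla^2Phi(x)z>0}. Since $zz^\intercal\neq 0$, already $\aq(s+)\neq\aq(s)$, so Lemma~\ref{l.jump_formula} applies. Using that $A_r=\nabla^2\Phi(r,X_r)$ is symmetric and that $B\geq zz^\intercal$ implies $v^\intercal B v\geq (z\cdot v)^2$ for all $v\in\R^D$, sandwiching the above identity with $z$ on both sides yields
\[
z^\intercal\bigl(\ap(s+)-\ap(s)\bigr)z\geq\frac{2}{t_\star-t}\int_t^{t_\star}\E\Ll[\Ll(z^\intercal A_r z\Rr)^2\Rr]\d r.
\]
By Lemma~\ref{l.convex_Phi}~\eqref{i.l.convex_Phi_2}, the hypothesis \eqref{e.znabla^2Phi(x)z>0} propagates to $z^\intercal A_r z>0$ for every $r$, almost surely, so the right-hand side is strictly positive. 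This yields $z^\intercal(\ap(s+)-\ap(s))z>0$, as required.

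\textbf{Biconditional under spanning.} If $\supp P_1$ spans $\R^D$, Lemma~\ref{l.convex_Phi}~\eqref{i.P_1_span} tells us that \emph{every} nonzero $z$ satisfies \eqref{e.znabla^2Phi(x)z>0}. Given $\aq(s+)\neq\aq(s)$, the increment $\aq(s+)-\aq(s)$ is a nonzero element of $\S^D_+$, and Lemma~\ref{l.A>0=>A>zz} supplies a nonzero $z$ with $\aq(s+)-\aq(s)\geq zz^\intercal$. Invoking the second implication then gives $\ap(s+)\neq\ap(s)$; together with the first implication, this closes the equivalence. I do not anticipate a real obstacle here: the substantive work has been done upstream (the jump formula and the strict convexity of $\Phi$ along admissible directions), and the argument is essentially the bookkeeping of those two inputs together with Lemma~\ref{l.A>0=>A>zz}.
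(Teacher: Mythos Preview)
Your proposal is correct and follows essentially the same route as the paper: both derive all three assertions directly from the integral identity of Lemma~\ref{l.jump_formula}, using Lemma~\ref{l.convex_Phi}~\eqref{i.l.convex_Phi_2} for strict positivity and Lemma~\ref{l.A>0=>A>zz} plus Lemma~\ref{l.convex_Phi}~\eqref{i.P_1_span} for the biconditional. The only cosmetic difference is that you spell out the sandwiching step $v^\intercal B v\geq (z\cdot v)^2$ and the setup of Lemma~\ref{l.jump_formula} more explicitly than the paper does.
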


\begin{proof}
If $\aq(s+)=\aq(s)$, then Lemma~\ref{l.jump_formula} implies $\ap(s+)=\ap(s)$, which verifies the first implication. Now assuming $\aq(s+)-\aq(s)\geq zz^\intercal$, we can use the formula in Lemma~\ref{l.jump_formula} to set
\begin{gather*}
    z^\intercal \Ll(\ap(s+)-\ap(s)\Rr)z = \frac{2}{t_\star-t}\int_t^{t_\star} \E \Ll[(A_r z)^\intercal \Ll(\aq(s+)-\aq(s)\Rr)A_r z \Rr] \d r 
    \\
    \geq \frac{2}{t_\star-t}\int_t^{t_\star} \E \Ll[(z^\intercal A_r z)^2 \Rr] \d r 
\end{gather*}
which is strictly positive due to Lemma~\ref{l.convex_Phi}~\eqref{i.l.convex_Phi_2} and the definition of $A_\cdot$ in~\eqref{e.R(t)=A_t=}. We turn to the last assertion and there is only one direction to be verified. If $\aq(s+)\neq\aq(s)$, Lemma~\ref{l.A>0=>A>zz} implies $\aq(s+)-\aq(s)\geq zz^\intercal$ for some nonzero $z\in\R^\D$. 
The assumption on $\supp P_1$ together with Lemma~\ref{l.convex_Phi}~\eqref{i.P_1_span} ensures that $z$ satisfies~\eqref{e.znabla^2Phi(x)z>0} and the desired result follows.
\end{proof}

Finally, we give a formula for derivatives of paths on an interval on which they are absolutely continuous. Due to the regularity assumption on $\alpha^{-1}$, $L_p$, and $L_q$, this result is not used in this paper but can be useful in specific scenarios.

\begin{lemma}[Formula along an absolutely continuous increment]\label{l.formula_cts_incre}

Under~\ref{i.setting}, fix $X_0=0$. 
If $\alpha^{-1}$ is absolutely continuous on some interval $I\subset (0,1)$ and $L_p$ and $L_q$ are differentiable on $\Ll\{\alpha^{-1}(s):\:s\in I\Rr\}$, then $\ap$ and $\aq$ are differentiable at almost every $s\in I$ and satisfy
\begin{align}\label{e.dotap(s)=2E[Adotaq(s)A]}
    \dot \ap(s) = 2\E \Ll[A_{\alpha^{-1}(s)}^\intercal \dot \aq(s)A_{\alpha^{-1}(s)}\Rr].
\end{align}
\end{lemma}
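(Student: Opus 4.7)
The plan is to combine the chain rule with the identification $L_p=R$ on $\supp\d\alpha$ (Lemma~\ref{l.rep_grad_psi_2}) and the absolutely continuous integral representation of $R$ (Lemma~\ref{l.R,A_rel}). Since altering $L_p(0)$ and $L_q(0)$ does not affect $\ap,\aq$ on $(0,1]$, nor the values of $A_r$ on a neighbourhood of $\alpha^{-1}(I)\subseteq\supp\d\alpha$, one may assume the joint decomposition is pinned and hence apply Lemma~\ref{l.rep_grad_psi_2} to get $L_p(t)=R(t)$ for every $t\in\supp\d\alpha$.

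First I would invoke the chain rule. Since $\alpha^{-1}$ is absolutely continuous on $I$, it is differentiable at a.e.\ $s\in I$; since $L_p$ and $L_q$ are Lipschitz and, by hypothesis, differentiable at every point of $\alpha^{-1}(I)$, the compositions $\ap=L_p\circ\alpha^{-1}$ and $\aq=L_q\circ\alpha^{-1}$ on $(0,1]$ are differentiable at a.e.\ $s\in I$ with
\begin{align*}
\dot\ap(s)=\dot L_p(\alpha^{-1}(s))\,\dot{\alpha^{-1}}(s),\qquad \dot\aq(s)=\dot L_q(\alpha^{-1}(s))\,\dot{\alpha^{-1}}(s).
\end{align*}
When $\dot{\alpha^{-1}}(s)=0$ both sides of~\eqref{e.dotap(s)=2E[Adotaq(s)A]} vanish, so it remains to treat $s_0\in I$ with $\dot{\alpha^{-1}}(s_0)>0$. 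Set $t_0:=\alpha^{-1}(s_0)\in\supp\d\alpha$ (by Lemma~\ref{l.law_dalpha}). For small $h$, $\alpha^{-1}(s_0+h)$ still lies in $\supp\d\alpha$ and satisfies $\alpha^{-1}(s_0+h)-t_0\sim \dot{\alpha^{-1}}(s_0)h\neq 0$, so the existence of the classical derivative $\dot L_p(t_0)$ together with $L_p=R$ on $\supp\d\alpha$ forces
\begin{align*}
\dot L_p(t_0)=\lim_{h\to 0}\frac{R(\alpha^{-1}(s_0+h))-R(t_0)}{\alpha^{-1}(s_0+h)-t_0}.
\end{align*}

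By Lemma~\ref{l.R,A_rel}, $R(t)=R(0)+2\int_0^t \E[A_r^\intercal \dot L_q(r) A_r]\,\d r$ with bounded integrand (using Proposition~\ref{p.reg_Phi}~\eqref{i.sup|d_xPhi|<infty} and the essential boundedness of $\dot L_q$), so $R$ is absolutely continuous and $\dot R(t)=2\E[A_t^\intercal \dot L_q(t) A_t]$ at a.e.\ $t\in[0,T]$. The area formula for the increasing absolutely continuous map $\alpha^{-1}\colon I\to[0,T]$, namely $\int_A \dot{\alpha^{-1}}\,\d s=|\alpha^{-1}(A)|$ for every measurable $A\subseteq I$, converts the statement ``a.e.\ $t\in[0,T]$'' into ``a.e.\ $s\in I$ with $\dot{\alpha^{-1}}(s)>0$'', because any Lebesgue-null $N\subseteq[0,T]$ satisfies $\int_{(\alpha^{-1})^{-1}(N)}\dot{\alpha^{-1}}\,\d s=|\alpha^{-1}(I)\cap N|=0$. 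At such $s_0$, $t_0$ is a Lebesgue point of the integrand, so $\dot L_p(t_0)=2\E[A_{t_0}^\intercal \dot L_q(t_0) A_{t_0}]$, and multiplying by $\dot{\alpha^{-1}}(s_0)$ yields~\eqref{e.dotap(s)=2E[Adotaq(s)A]}. The main obstacle I anticipate is exactly this measure-theoretic step: a Lebesgue-a.e.\ statement in $t\in[0,T]$ does not automatically pull back to a Lebesgue-a.e.\ statement in $s\in I$ under $\alpha^{-1}$, since $\alpha^{-1}$ may collapse positive-measure subsets of $\{\dot{\alpha^{-1}}=0\}$; the area formula confines any such pathology precisely to that null set, which has already been handled trivially.
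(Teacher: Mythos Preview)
Your argument is essentially correct, but it takes a route that is the opposite of the one in the paper. The paper's strategy is precisely to \emph{avoid} the measure-theoretic pullback you carry out: since $\alpha^{-1}$ is absolutely continuous on $I$, it is continuous there, so $J=\alpha^{-1}(I)$ is an interval, and for every $t_0\in J$ all nearby $t'$ lie in $J$. Hence the identity $L_p(t')-L_p(t_0)=2\int_{t_0}^{t'}\E[A_r^\intercal\dot L_q(r)A_r]\,\d r$ holds for \emph{all} $t'$ near $t_0$, and one differentiates directly. Using that $L_q$ is differentiable at $t_0$ (so $\tfrac{1}{\eps}\int_{t_0}^{t_0+\eps}\dot L_q\to\dot L_q(t_0)$) together with the continuity of $r\mapsto A_r$, the paper obtains $\dot L_p(t)=2\E[A_t^\intercal\dot L_q(t)A_t]$ for \emph{every} $t\in J$, and then the chain rule is immediate---no a.e.-in-$t$ statement ever needs to be transported to an a.e.-in-$s$ statement. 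Indeed, the remark following the lemma explains that this is exactly why the differentiability hypothesis is imposed. Your area-formula argument is a valid alternative and shows, interestingly, that the pullback the paper worried about \emph{does} work once one splits off $\{\dot{\alpha^{-1}}=0\}$; but it is more intricate and does not exploit the hypothesis as directly.

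One genuine gap: your reduction to the pinned case is not sound as written. Altering $L_q$ near $0$ leaves $\Phi(r,\cdot)$ unchanged for $r$ away from $0$ (the PDE is backward), but it changes the law of $X_\delta$ and hence of $X_r$ for all $r\ge\delta$, since with $X_0=0$ and $\alpha\equiv 0$ on $[0,\delta)$ one has $X_\delta\sim N(0,\,2(L_q(\delta)-L_q(0)))$. Consequently $A_r=\nabla^2\Phi(r,X_r)$ has a different law after your modification, and the quantity $\E[A_r^\intercal\dot\aq(s)A_r]$ in the conclusion can change. The paper's own proof simply invokes Lemma~\ref{l.rep_grad_psi_2} (which requires a pinned decomposition) without restating the pinned assumption in Lemma~\ref{l.formula_cts_incre}, so this appears to be a shared omission in the statement rather than something you can repair by a local modification of $L_p,L_q$.
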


\begin{proof}
Set $J=\Ll\{\alpha^{-1}(s):\:s\in I\Rr\}$.
If $\alpha^{-1}$ is constant on $I$, then both $\ap$ and $\aq$ are constant on $I$ due to~\eqref{e.aq=Lalpha(0,1]}. In this case, \eqref{e.dotap(s)=2E[Adotaq(s)A]} holds trivially. Henceforth, we assume that $\alpha^{-1}$ is not constant on $I$ and thus $J$ is not a singleton.
Due to $I\subset (0,1)$, Lemma~\ref{l.law_dalpha} implies $J\subset \supp \d \alpha$. This allows us to apply Lemma~\ref{l.rep_grad_psi_2} and Lemma~\ref{l.R,A_rel} to get
\begin{align*}
     L_p(t')-L_p(t) = 2\int_t^{t'}\E \Ll[A_r^\intercal \dot L_q(r)A_r\Rr] \d r, \quad\forall t,t'\in J.
\end{align*}
Since $L_p$ and $L_q$ are differentiable on $J$, we have
\begin{align}\label{e.dotL_p(t)=2E[AdotL_q]}
    \dot L_p(t) = 2 \E \Ll[A_t^\intercal \dot L_q(t)A_t\Rr],\quad\forall t\in J.
\end{align}
Since $\alpha^{-1}$ is absolutely continuous, there is a measurable subset $I_0$ of $I$ with $\Leb(I\setminus I_0)=0$ such that $\alpha^{-1}$ is differentiable on $I_0$. Using~\eqref{e.aq=Lalpha(0,1]}, we have $\dot \ap(s) = \dot L_p(\alpha^{-1}(s))\dot \alpha^{-1}(s)$ and $\dot \aq(s) = \dot L_q(\alpha^{-1}(s))\dot \alpha^{-1}(s)$ for every $s\in I$. These along with the above display yields the desired result.
\end{proof}

We briefly explain the reason for requiring $L_p$ and $L_q$ to be differentiable in Lemma~\ref{l.formula_cts_incre}. Without this assumption, we can still use the Lipschitzness and the Lebesgue differentiation theorem to get the relation in~\eqref{e.dotL_p(t)=2E[AdotL_q]} but only for $t$ in a full measure subset $J_0$ of $J$. To proceed, we need to change variable from $t$ to $\alpha^{-1}$ and ensure that $\Ll\{s\in I:\: \alpha^{-1}(s)\in J_0\Rr\}$ is a full measure subset of $I$. But in general this does not hold. 

We briefly mention a concrete situation where this lemma can be used. Recall the critical point relation~\eqref{e.crit.point} and set $q=0$ therein. Also set $D=1$. In this case, $\ap$ is a real-valued path and we can simply take $L_p(t)=t$ for each $t$ and $\alpha^{-1}=\ap$. Then, $(L_p, t\nabla\xi \circ L_p, \alpha)$ is a decomposition of $(p,t\nabla\xi(p))$. We can apply the above lemma to this pair since both $L_p$ and $t\nabla\xi \circ L_p$ are differentiable everywhere (assuming even regularity of $\xi$).

\subsection{Analysis of the replica-symmetry breaking structure}

Recall the critical-point condition in \eqref{e.crit.point}, which can be rewritten as $p=\partial_q \psi(q+t\nabla\xi(p))$. We now transfer the results of the previous subsection to this setting.
\begin{proposition}[RSB induced by an external field]Let $p,q\in\mcl Q_\infty$ satisfy $p=\partial_q \psi(q+t\nabla\xi(p))$ for some $t\geq 0$ and $\xi$. Assume that $\supp P_1$ spans $\R^\D$. 
We have that, for $0<s<s'<1$, if $\aq(s')\neq \aq(s)$, then $\ap(s')\neq \ap(s)$.

Also, if $\aq(s+)\neq \aq(s)$ at some $s\in (0,1)$, then $\ap(s+)\neq \ap(s)$. More precisely, if $\aq(s+)- \aq(s)\geq zz^\intercal$ for some $z\in\R^\D\setminus\{0\}$, then $z^\intercal \Ll(\ap(s+)-\ap(s)\Rr)z>0$.

\end{proposition}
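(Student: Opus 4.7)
The plan is a short reduction to Lemma~\ref{l.detect_gen_incre} and Corollary~\ref{c.jump_iff} applied to the pair $(p,\tilde q)$, where $\tilde q := q + t\nabla\xi(p)$, so that the hypothesis rewrites as $p = \partial_q\psi(\tilde q)$. The only real content is that the passage from $\aq$ to $\overrightarrow{\tilde q}$ can only enlarge increments in the $S^D_+$ order.

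The input making this precise is the condition $\nabla\xi(a+b)-\nabla\xi(a)\in S^D_+$ for all $a,b\in S^D_+$, recalled just before the statement of Theorem~\ref{t.main}; it expresses that $\nabla\xi$ is order-preserving on $S^D_+$. Since $\nabla\xi$ is also continuous, applying it commutes with taking left-continuous versions on $(0,1]$, and we obtain, for every $0<s\leq s'\leq 1$,
\begin{equation*}
    \overrightarrow{\tilde q}(s')-\overrightarrow{\tilde q}(s) = \Ll(\aq(s')-\aq(s)\Rr) + t\Ll(\nabla\xi(\ap(s'))-\nabla\xi(\ap(s))\Rr) \geq \aq(s')-\aq(s)
\end{equation*}
in the $S^D_+$ order, and the analogous inequality holds with $s'$ replaced by $s+$.

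For the first assertion, if $\aq(s')\neq\aq(s)$, then since $S^D_+$ is a proper cone the display above forces $\overrightarrow{\tilde q}(s')\neq\overrightarrow{\tilde q}(s)$; applying Lemma~\ref{l.detect_gen_incre} to $(p,\tilde q)$ yields $\ap(s')\neq\ap(s)$. For the refined jump statement, if $\aq(s+)-\aq(s)\geq zz^\intercal$ with $z\neq 0$, then $\overrightarrow{\tilde q}(s+)-\overrightarrow{\tilde q}(s)\geq zz^\intercal$ as well; under the hypothesis that $\supp P_1$ spans $\R^\D$, Lemma~\ref{l.convex_Phi}~\eqref{i.P_1_span} guarantees that this $z$ satisfies~\eqref{e.znabla^2Phi(x)z>0} for the Parisi PDE solution associated with a canonical decomposition of $\tilde q$, so Corollary~\ref{c.jump_iff} applied to $(p,\tilde q)$ delivers $z^\intercal(\ap(s+)-\ap(s))z>0$. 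The unqualified jump claim then follows by invoking Lemma~\ref{l.A>0=>A>zz} to extract such a nonzero $z$ from the nonzero PSD matrix $\aq(s+)-\aq(s)$.

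I do not expect any serious obstacle beyond this monotonicity observation. Minor care should be taken to verify that $\tilde q$ indeed lies in $\mcl Q_\infty$ so that the lemmas of Subsection~\ref{s.detect} apply, but this is built into the hypothesis: the expression $\partial_q\psi(q+t\nabla\xi(p))$ is only meaningful when $q+t\nabla\xi(p)\in\mcl Q_\infty$.
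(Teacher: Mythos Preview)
Your proposal is correct and follows the same route as the paper: set $\tilde q = q + t\nabla\xi(p)$ and apply Lemma~\ref{l.detect_gen_incre} and Corollary~\ref{c.jump_iff} to the pair $(p,\tilde q)$. The paper's proof is extremely terse and leaves the passage from increments of $\aq$ to increments of $\overrightarrow{\tilde q}$ entirely implicit; you have correctly identified and supplied this bridge via the order-monotonicity of $\nabla\xi$ on $S^D_+$, which is exactly the right ingredient.
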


\begin{proof}
Write $q' = q+t\nabla\xi(p)$.
The main statement follows from Lemma~\ref{l.detect_gen_incre} applied to the pair $(p,q')$.
The additional statement follows from Corollary~\ref{c.jump_iff} applied $(p,q')$. 
\end{proof}

The mechanism behind the simultaneity of the replica-symmetry-breaking structures between the different types of spins  is based on the fact that the function $\nabla \xi$ can transfer an increment along some direction $y\in\R^\D$ to some other direction $z\in\R^\D$. To formulate this precisely, for nonzero $y,z\in\R^\D$, we say that $\nabla\xi$ is \textbf{$y$-to-$z$ coupled} if for every $a\in\S^\D_+$ and $b\in\S^\D_+$ satisfying $y^\intercal by>0$, there is $c>0$ such that
\begin{align}\label{e.strongly_ytoz}
    \frac{\d}{\d \eps}\Big|_{\eps=0}\nabla\xi(a+\eps b)\geq c zz^\intercal.
\end{align}

\begin{proposition}[Simultaneous RSB]\label{p.sim_RSB}
Let $t > 0$ and $p,q\in\mcl Q_\infty$ satisfy $\partial_q \psi(q+t\nabla\xi(p)) =p$, let $y,z \in \R^D \setminus \{0\}$, and assume that $\supp P_1$ spans $\R^\D$ and that $\nabla \xi$ is $y$-to-$z$ coupled. 

\begin{itemize}
\item
If $y^\intercal \ap(s') y >y^\intercal \ap(s) y$ for some $0< s<s'<1$, then $z^\intercal \ap(s') z >z^\intercal \ap(s) z$.
\item
If $y^\intercal \ap(s+) y >y^\intercal \ap(s) y$ for some $0< s < 1$, then $z^\intercal \ap(s+) z >z^\intercal \ap(s) z$.
\end{itemize}
\end{proposition}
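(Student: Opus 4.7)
The plan is to introduce $q' := q + t \nabla \xi(p) \in \mcl Q_\infty$, so that the critical-point condition becomes $p = \partial_q\psi(q')$; by continuity of $\nabla\xi$ and the left-continuous convention~\eqref{e.aq=}, one has $\aq'(u) = \aq(u) + t \nabla\xi(\ap(u))$ for every $u \in (0,1]$. This brings us within reach of the detection results of Subsection~\ref{s.detect} applied to the pair $(p,q')$. The key step is to convert the directional hypothesis along $y$ into a matrix lower bound along $z z^\intercal$ on the increment of $\aq'$. Letting $\star$ denote $s'$ in the first bullet and $s+$ in the second, set $b := \ap(\star) - \ap(s)$; monotonicity of $p$ gives $b \in S^D_+$, and the hypothesis reads $y^\intercal b y > 0$. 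The fundamental theorem of calculus gives
\begin{equation*}
\nabla\xi(\ap(\star)) - \nabla\xi(\ap(s)) \,=\, \int_0^1 \nabla^2 \xi(\ap(s) + \eps b)[b] \, \d\eps.
\end{equation*}
For each $\eps \in [0,1]$, $\ap(s) + \eps b \in S^D_+$, so the $y$-to-$z$ coupling supplies $c_\eps > 0$ with $\nabla^2\xi(\ap(s)+\eps b)[b] \geq c_\eps z z^\intercal$. The sharpest such $c_\eps$ depends continuously on its arguments wherever positive, and positivity throughout $[0,1]$ together with compactness produces a uniform $c^\ast > 0$. Combining with $\aq(\star) - \aq(s) \in S^D_+$, one obtains
\begin{equation*}
\aq'(\star) - \aq'(s) \,\geq\, t c^\ast\, z z^\intercal.
\end{equation*}

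\textbf{Jump case.} Set $\tilde z := \sqrt{t c^\ast}\, z$, so that $\aq'(s+) - \aq'(s) \geq \tilde z \tilde z^\intercal$. The full-support hypothesis on $P_1$ together with Lemma~\ref{l.convex_Phi}\,\eqref{i.P_1_span} ensures that every nonzero vector—in particular $\tilde z$—satisfies~\eqref{e.znabla^2Phi(x)z>0}. Corollary~\ref{c.jump_iff}, applied to $(p,q')$, then yields $\tilde z^\intercal (\ap(s+) - \ap(s)) \tilde z > 0$, which is exactly the second bullet.

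\textbf{General increment.} For the first bullet one needs a quantitative scalar analog of Lemma~\ref{l.detect_gen_incre}: from $\aq'(s') - \aq'(s) \geq \tilde z \tilde z^\intercal$, to conclude $\tilde z^\intercal (\ap(s') - \ap(s))\tilde z > 0$. Working with the canonical joint decomposition of $(p,q')$, and following the computation leading to~\eqref{e.p(s')-p(s)=...=} (based on Lemmas~\ref{l.rep_grad_psi_2} and~\ref{l.R,A_rel}), one gets
\begin{equation*}
\tilde z^\intercal (\ap(s') - \ap(s))\tilde z \,=\, 2 \int_{\alpha^{-1}(s)}^{\alpha^{-1}(s')} \E\Ll| \dot L_{q'}(r)^{1/2} A_r \tilde z \Rr|^2 \d r \,\geq\, 0,
\end{equation*}
with vanishing forcing $\dot L_{q'}(r) A_r \tilde z = 0$ almost surely for almost every $r$ in the interval. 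The hard step, which I expect to be the main obstacle, is to combine this with $\int \dot L_{q'}(r)\, \d r = \aq'(s')-\aq'(s) \geq \tilde z\tilde z^\intercal$ to reach a contradiction. Following the template of the proof of Lemma~\ref{l.detect_gen_incre}, the argument will leverage the pointwise positive-definiteness of $A_r$ (Lemma~\ref{l.convex_Phi}\,\eqref{i.l.convex_Phi_2}) together with the non-degeneracy of the distribution of the random vector $A_r \tilde z = \nabla^2 \Phi(r,X_r)\tilde z$—inherited from the diffusion $X_r$ under the full-support hypothesis on $P_1$—to force $\dot L_{q'}(r)$ to annihilate a spanning family of vectors, and hence to vanish almost everywhere on the interval, contradicting the matrix lower bound.
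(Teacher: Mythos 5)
Your treatment of the second bullet (the jump case) follows the paper's route: pass to $q'=q+t\nabla\xi(p)$, lower-bound the jump of $\overrightarrow{q'}$ by a multiple of $zz^\intercal$, and invoke Corollary~\ref{c.jump_iff} together with Lemma~\ref{l.convex_Phi}~\eqref{i.P_1_span}. One technical slip there: the optimal constant $c^*(M)=\sup\{c:\,M\succeq c\,zz^\intercal\}$ is upper semicontinuous but \emph{not} continuous in $M$ (e.g.\ $M_n=(z+n^{-1}w)(z+n^{-1}w)^\intercal\to zz^\intercal$ with $w\perp z$ has $c^*(M_n)=0$ while $c^*(zz^\intercal)=1$), so compactness does not produce a uniform $c^\ast>0$. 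This is harmless: you only need $\int_0^1 c_\eps\,\d\eps>0$, which holds since $\eps\mapsto c_\eps$ can be chosen measurable (it is u.s.c.) and is positive for every $\eps$.

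The genuine gap is in the first bullet. You reduce the hypothesis to the \emph{integrated} bound $\overrightarrow{q'}(s')-\overrightarrow{q'}(s)\geq \tilde z\tilde z^\intercal$ and then need a quantitative analogue of Lemma~\ref{l.detect_gen_incre}; but this reduction discards exactly the information that makes the argument close. If $\tilde z^\intercal(\ap(s')-\ap(s))\tilde z=0$, your identity only gives $\dot L_{q'}(r)^{1/2}A_r\tilde z=0$ for the \emph{single} vector $\tilde z$. In the proof of Lemma~\ref{l.detect_gen_incre} the spanning family of vectors comes from varying a deterministic test vector $x$ in $x^\intercal A_r^\intercal\dot L_{q}(r)A_rx=0$, which is available there because the full matrix increment of $p$ vanishes; that template does not transfer here. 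The substitute you propose --- non-degeneracy of the law of $A_r\tilde z$ --- is established nowhere and fails in general: with $X_0=0$ fixed, $A_r=\nabla^2\Phi(r,X_r)$ is deterministic on any initial interval where the diffusion has not moved, and more generally nothing prevents the support of $A_r\tilde z$ from lying in a hyperplane annihilated by $\dot L_{q'}(r)$. Moreover the purely integrated statement is false as a matrix fact: one can have $M_r\succeq 0$ with $\int M_r\,\d r\succeq \tilde z\tilde z^\intercal$ while $M_r v_r=0$ for nonzero vectors $v_r$ with $\tilde z\cdot v_r>0$ (take $M_r=u_ru_r^\intercal$ with $u_r$ alternating between $e_1-e_2$ and $e_1+e_2$). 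So the "hard step" you flag is a real obstruction, not a technicality.

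The fix is to apply the $y$-to-$z$ coupling \emph{pointwise in time} to the derivative rather than to the total increment. With the canonical joint decomposition $(L_p,L_q,\alpha)$ of $(p,q)$ and $L_{q'}=L_q+t\,\nabla\xi\circ L_p$, the hypothesis $y^\intercal\ap(s')y>y^\intercal\ap(s)y$ forces the set $G=\{r\in[\alpha^{-1}(s),\alpha^{-1}(s')]:\,y^\intercal\dot L_p(r)y\neq0\}$ to have positive Lebesgue measure; on $G$ the coupling applied with $a=L_p(r)$, $b=\dot L_p(r)$ gives $\dot L_{q'}(r)\geq \dot L_q(r)+t\,c(r)\,zz^\intercal\geq t\,c(r)\,zz^\intercal$ with $c(r)>0$ measurable. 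Plugging this pointwise bound into
\begin{equation*}
z^\intercal\Ll(\ap(s')-\ap(s)\Rr)z=2\int_{\alpha^{-1}(s)}^{\alpha^{-1}(s')}\E\Ll[(A_rz)^\intercal\dot L_{q'}(r)(A_rz)\Rr]\d r\geq 2t\int_{G}c(r)\,\E\Ll[|A_rz|^2\Rr]\d r
\end{equation*}
and using that $A_r\in\S^\D_{++}$ almost surely (Lemma~\ref{l.convex_Phi}~\eqref{i.l.convex_Phi_2}) yields strict positivity directly, with no detection lemma and no contradiction argument needed.
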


\begin{proof}
We write $q' = q + t \nabla \xi(p)$, and start by proving the second statement. If $y^\intercal \ap(s+) y >y^\intercal \ap(s) y$, then the assumption that $\xi$ is $y$-to-$z$ coupled guarantees the existence of a constant $c > 0$ such that $\nabla \xi(\ap(s+))- \nabla \xi(\ap(s)) \ge c z z^\intercal$, and thus $\overrightarrow{q'}(s+) - \overrightarrow{q'}(s) \ge tc z z^\intercal$. An application of Corollary~\ref{c.jump_iff} to the pair $(p,q')$ thus yields the conclusion.

We now turn to the first part of the statement. Let $(L_p,L_q,\alpha)$ be the canonical joint decomposition of $p$ and $q$ given in~\eqref{e.alpha=_joint_decomp} and~\eqref{e.L_k=_joint_decomp}. We adopt the setting~\ref{i.setting} accordingly. 
Due to $\alpha^{-1}=\tr \ap+\tr\aq$ (see~\eqref{e.alpha=_joint_decomp}) and the assumption $\ap(s')\neq \ap(s)$, we have $\alpha^{-1}(s')>\alpha^{-1}(s)$.
Setting $L_{q'} = L_q + t\nabla\xi\circ L_p$, we have $\overrightarrow{q'} = L_{q'}\circ\alpha^{-1}$.
Applying Lemma~\ref{l.rep_grad_psi_2} to the pair $(p,q')$ and Lemma~\ref{l.R,A_rel}, we get
\begin{align}\label{e.p(s')-p(s)=2intE}
    \ap(s')-\ap(s) \stackrel{\eqref{e.aq=Lalpha(0,1]}}{=} L_p\circ\alpha^{-1}(s') - L_p\circ\alpha^{-1}(s) =2\int_{\alpha^{-1}(s)}^{\alpha^{-1}(s')} \E \Ll[A_r^\intercal \dot L_{q'}(r) A_r\Rr]\d r.
\end{align}
Here, $\alpha^{-1}(s),\alpha^{-1}(s')\in\supp\d\alpha$ due to $s,s'>0$ and Lemma~\ref{l.law_dalpha}. We set
\begin{align*}
    G=\Ll\{r\in\Ll[\alpha^{-1}(s),\,\alpha^{-1}(s')\Rr]:\:y^\intercal \dot L_p(r)y\neq 0\Rr\}.
\end{align*}
Due to the condition $y^\intercal\ap(s')y\neq y^\intercal\ap(s)y$, we must have $\Leb\Ll(G\Rr)>0$. 
Using~\eqref{e.strongly_ytoz}, we can see that for every $ r\in G$ there is $c(r)>0$ such that $\frac{\d}{\d r}\nabla\xi \Ll(L_p(r)\Rr)\geq c(r)zz^\intercal$.
We can ensure that $c(\cdot)$ is a measurable function by choosing $c(r) = \inf \frac{u^\intercal \dot L_{q'}(r) u}{u^\intercal z z^\intercal u}$ for $r\in G$, where the infimum is taken over a countable dense subset of $\Ll\{u\in\R^\D:z^\intercal u >0\Rr\}$. 
By giving up a subset of zero measure, we can assume that $L_q$ and $L_{q'}$ are differentiable on $G$. Then, we have $\dot L_{q'}(r)=\dot L_q(r) + t\frac{\d}{\d r}\nabla\xi \Ll(L_p(r)\Rr) \geq t c(r)zz^\intercal$ for each $r\in G$, and since $\dot{L}_q(r) \ge 0$, we deduce that $\dot L_{q'}(r) \ge t c(r) z z^\intercal$. 
Inserting this to~\eqref{e.p(s')-p(s)=2intE}, we get
\begin{align*}
    z^\intercal \Ll(\ap(s')-\ap(s) \Rr)z \geq 2t \int_{\alpha^{-1}(s)}^{\alpha^{-1}(s')}  c(r)\E \Ll[\Ll|A_r z\Rr|^2\Rr]\d r.
\end{align*}
By Lemma~\ref{l.convex_Phi}~\eqref{i.l.convex_Phi_2}, the right-hand side is strictly positive, which yields the desired result.
\end{proof}

\begin{proof}[Proof of Theorem~\ref{t.main}]
We first show the result with $p$ replaced by its left-continuous version $\ap$. 
Assumption~\eqref{e.ass.coupling} implies that $\nabla\xi$ is $y$-to-$z$ coupled for every nonzero $y,z\in\R^\D$, in the sense of~\eqref{e.strongly_ytoz}. Now, if $\ap(s')-\ap(s)\neq 0$ for $0<s<s'<1$, then there is nonzero $y\in\R^\D$ such that $y^\intercal \ap(s')y > y^\intercal \ap(s) y$. The first part of Proposition~\ref{p.sim_RSB} implies $z^\intercal\ap(s') z>z^\intercal \ap (s)z$ for every nonzero $z\in\R^\D$, which gives $\ap(s')-\ap(s)\in\S^\D_{++}$ as desired.

We now show that the same statement is also valid with the right-continuous version of the path. Suppose that $y^\intercal p(s')y > y^\intercal p(s) y$ for $0\leq s<s'\leq 1$. If the function $u \mapsto y^\intercal p(u)y$ is non-constant on $(s,s')$, then we can apply the result that concerns the left-continuous path $\ap$ and obtain the conclusion. Otherwise, the function $u \mapsto y^\intercal p(u)y$ must have a jump at $u = s'$ (a jump at $u = s$ is not possible by right-continuity) and we must have $s'\neq 1$ (due to the continuity of $p$ at $1$ by definition as in~\eqref{e.q(1)=}). This means that we have $y^\intercal \ap(s'+)y > y^\intercal \ap(s') y$. We can therefore appeal to the second part of Proposition~\ref{p.sim_RSB} and argue as in the previous paragraph to conclude that $\ap(s'+) -  \ap(s') \in S^D_{++}$. This implies that $p(s') - p(s) \in S^D_{++}$, as desired. 
\end{proof}

\begin{remark}
\label{r.example}
It was stated in the introduction that the presence of a term of the form of \eqref{e.extra.energy} is sufficient to guarantee the validity of the assumption \eqref{e.ass.coupling}. We clarify why this is so here, and for convenience we simply fix $D = 2$ and choose the energy function $H_N(\sigma)$ to be given by
\begin{align}
\label{e.example}
    H_N(\sigma) = N^{-\frac{1}{2}}\sum_{i,j=1}^N \Ll(g^{11}_{ij}\sigma_{1i}\sigma_{1j}+g^{12}_{ij}\sigma_{1i}\sigma_{2j}+g^{22}_{ij}\sigma_{2i}\sigma_{2j}\Rr),\quad\forall \sigma\in \R^{2\times N},
\end{align}
where $(g^{kl}_{ij})_{1\leq k,l\leq 2;\; i,j\ge 1}$ are independent standard Gaussian variables. The covariance of this energy function is given as in~\eqref{e.def.xi} for the function $\xi:\R^{2\times 2}\to\R$ such that $\xi(a) = a^2_{11}+a^2_{22}+a_{11}a_{22}$, where we write $a=(a_{ij})_{1\leq i,j\leq 2}\in\R^{2\times2}$. We have
\begin{align*}
    \nabla \xi(a) = \begin{pmatrix}
2a_{11}+a_{22} & 0\\
0 & 2a_{22}+a_{11}
\end{pmatrix}
,\quad\forall a \in \R^{2\times 2},
\end{align*}
and thus, for every $a,b\in\R^{2\times2}$, 
\begin{align*}
    \frac{\d}{\d\eps}\big|_{\eps=0} \nabla \xi(a+\eps b)= \begin{pmatrix}
2b_{11}+b_{22} & 0\\
0 & 2b_{22}+b_{11}
\end{pmatrix}.
\end{align*}
For every nonzero $y\in\R^2$, if $b\in \S^2_+$ satisfies $y^\intercal by>0$, we must have $b_{11}>0$ or $b_{22}>0$, and due to the above display, we have that \eqref{e.ass.coupling} holds.

In fact, if we only keep the cross-term in \eqref{e.example}, so that the function $\xi$ is now given by $\xi(a)= a_{11} a_{22}$  and
\begin{align*}
    \frac{\d}{\d\eps}\big|_{\eps=0} \nabla \xi(a+\eps b)= \begin{pmatrix}
b_{22} & 0\\
0 & b_{11}
\end{pmatrix},
\end{align*}
then we see that $\xi$ is $e_1$-to-$e_2$ coupled and $e_2$-to-$e_1$ coupled, where $(e_1,e_2)$ is the canonical basis. Even though this case does not satisfy the assumptions of Theorem~\ref{t.main}, we see that we can still ensure simultaneous replica-symmetry breaking by applying Proposition~\ref{p.sim_RSB}. A similar phenomenon is also valid with more types, as discussed in the second paragraph after the statement of Theorem~\ref{t.main}.  
\end{remark}

\section{The case of multi-species models}

In this section, we describe the necessary adjustments for obtaining a version of Theorem~\ref{t.main} in the context of multi-species models. 

We fix a finite set $\sS$ containing symbols for different species.
We start by describing the multi-species model, which is the same as that in~\cite{multi-sp} with the parameter $\kappa_\s$ therein set to be $1$ for every $\s\in\sS$.

For each $N\in\N$, let $(I_{N,\s})_{\s\in\sS}$ be a partition of $\{1,\dots,N\}$. We interpret each $I_{N,\s}$ as the set of indices for spins belonging to the $\s$-species. 
For each $N\in\N$, we set
\begin{align}\label{e.lambda_N,s=}
    \lambda_{N,\s} = |I_{N,\s}|/N,\quad\forall \s\in\sS;\qquad \lambda_N=\Ll(\lambda_{N,\s}\Rr)_{\s\in\sS}.
\end{align}
We interpret $\lambda_{N,\s}$ as the fraction of the $\s$-species in terms of population.
For each $N \in \N\cup\{\infty\}$, we consider the simplex
\begin{align*}\blacktriangle_N = \Big\{(\lambda_\s)_{\s\in\sS}\ \big|\ \lambda_\s \in [0,1]\cap (\Z/N),\,\forall \s\in\sS;\; \sum_{\s\in\sS}\lambda_\s =1\Big\}
\end{align*}
with the understanding that $\Z/\infty = \R$ when $N=\infty$.
In view of~\eqref{e.lambda_N,s=}, we have $\lambda_N\in \blacktriangle_N$ for each $N\in\N$.

For each $\s\in\sS$, let $\mu_\s$ be a finite positive measure supported on $[-1,+1]$.
For every $N\in\N$, a spin configuration is of the form $\sigma = (\sigma_{1},\dots , \sigma_{N}) \in [-1,+1]^N$, where each spin~$\sigma_n$ is independently drawn from $\mu_\s$ if $n\in I_{N,\s}$.
In other words, denoting by $P_{N,\lambda_N}$ the distribution of $\sigma$, we have
\begin{align*}\d P_{N,\lambda_N}(\sigma) = \otimes_{\s\in\sS}\otimes_{n \in I_{N,\s}} \d\mu_\s(\sigma_{n}).
\end{align*}

For two spin configurations $\sigma,\sigma'$ of size $N$ and $\s\in\sS$, we consider the overlap of the $\s$-species:
\begin{align*}R_{N,\lambda_N,\s}(\sigma,\sigma') = \frac{1}{N}\sigma_{I_{N,\s}} \cdot\sigma'_{I_{N,\s}} \in [-1,+1]
\end{align*}
where
\begin{align}\label{e.sigma_bullet,I}
    \sigma_{I_{N,\s}} = (\sigma_{n})_{n\in I_{N,\s}}
\end{align}
is a vector in $\R^{|I_{N,\s}|}$ and similarly for $\sigma'_{I_{N,\s}}$. The entire overlap structure of the spin configurations is captured by the $\R^\sS$-valued overlap:
\begin{align*}
    R_{N,\lambda_N}(\sigma,\sigma') = \Ll(R_{N,\lambda_N,\s}(\sigma,\sigma')\Rr)_{\s\in\sS}.
\end{align*}

Let $\xi:\R^\sS\to\R$ be a smooth function and assume the existence of a centered Gaussian process $(H_N(\sigma))_{\sigma\in [-1,+1]^N}$ with covariance
\begin{align*}\E\Ll[ H_N(\sigma)H_N(\sigma')\Rr] = N\xi\Ll(R_{N,\lambda_N}(\sigma,\sigma')\Rr).
\end{align*}

Let $\square$ be a placeholder for subscripts. In the previous sections, we have used the notation $\mcl Q_\square$ for right-continuous increasing paths in $\S^\D_+$ with properties indicated by $\square$. Now we display the dependence on $\D$ by writing $\mcl Q_\square (\D)$.
We introduce the collection of paths appearing in the multi-species setting:
\begin{align*}\mcl Q^\sS_\square = \prod_{\s\in\sS} \mcl Q_\square(1).
\end{align*}
These paths can be thought of as the diagonal part of the paths that appeared in the previous sections. 
We now construct the external field parametrized by $q =(q_\s)_{\s\in\sS} \in \mcl Q^\sS_\infty$. Here, each $q_\s$ is a right-continuous increasing path in $\R_+$.
For each $\s\in\sS$ and $n\in I_{N,\s}$, let $(w^{q_\s}_n(\brho))_{\brho\in\supp\fR}$ be the real-valued centered Gaussian process given as in~\eqref{e.E[ww]=} (for $\D=1$) with covariance 
\begin{align}\label{e.Ew^q_s_iw^q_s_i=}
    \E\Ll[ w^{q_\s}_n (\brho)w^{q_\s}_n(\brho')\Rr]  = q_\s(\brho\wedge\brho).
\end{align}
Conditioned on $\fR$, we assume that all these processes, indexed by $\s$ and $n$, are independent. For each $\s$, we write $w^{q_\s}_{I_{N,\s}} = \Ll(w^{q_\s}_n\Rr)_{n\in I_{N,\s}}$. Recall the notation in~\eqref{e.sigma_bullet,I}. For each $N\in\N$ and $q\in \mcl Q^\sS_\infty$, we define
\begin{align*}W^q_N(\sigma,\brho) = \sum_{s\in \sS}w^{q_\s}_{I_{N,\s}}(\brho)\cdot \sigma_{I_{N,\s}}
\end{align*}
which, conditioned on $\fR$, is a centered Gaussian process with covariance
\begin{align*}\E \Ll[ W^q_N(\sigma,\brho) W^q_N(\sigma',\brho')\Rr]\stackrel{\eqref{e.Ew^q_s_iw^q_s_i=}}{=} N q(\brho\wedge\brho')\cdot R_{N,\lambda_N}(\sigma,\sigma').
\end{align*}

Now, for $N\in \N$, $\lambda_N\in \blacktriangle_N$, $t\in[0,\infty)$, and $q \in \mcl Q_\infty^\sS$, we consider the Hamiltonian
\begin{align*}H^{t,q}_N(\sigma,\brho)= \sqrt{2t}H_N(\sigma) - t N \xi \Ll(R_{N,\lambda_N}(\sigma,\sigma)\Rr)  
    + \sqrt{2}W^q_N(\sigma,\brho) - Nq(1)\cdot R_{N,\lambda_N}(\sigma,\sigma),
\end{align*}
where $q(1)= (q_\s(1))_{\s\in\sS}\in \R^\sS_+$ and $q(1)\cdot R_{N,\lambda_N}(\sigma,\sigma) = \sum_{s\in\mathscr{\sS}}q_\s(1)\cdot R_{N,\lambda_N,\s}(\sigma,\sigma)$. We define the associated free energy and Gibbs measure
\begin{gather*}
    \bar F_{N,\lambda_N}(t,q) = - \frac{1}{N}\E\log \iint  \exp\Ll( H^{t,q}_N(\sigma,\brho)\Rr)\d P_{N,\lambda_N}(\sigma)\d \fR(\brho),  \\
    \la\cdot\ra_{N,\lambda_N} \propto \exp\Ll( H^{t,q}_N(\sigma,\brho)\Rr)\d P_{N,\lambda_N}(\sigma)\d \fR(\brho). \end{gather*}
Here, $\E$ first averages over all the Gaussian randomness in $H_N(\sigma)$ and $W^q_N(\sigma,\brho)$ and then the randomness in $\fR$. 
The dependence of $F_{N,\lambda_N}(t,q)$ on the partition $(I_{N,\s})_{\s\in\sS}$ is only through $\lambda_N$, which is the reason for us to set the notation in this way.

For each $q=(q_\s)_{\s\in\sS} \in \mcl Q^\sS_\infty$ and $\lambda_\infty = (\lambda_{\infty,s})_{\s\in\sS}\in \blacktriangle_\infty$, we define
\begin{gather*}
    \psi_{\mu_\s}(q_\s)=-\E \log \iint\exp\Ll(\sqrt{2}w^{q_\s}(\brho)\cdot \tau- q_\s(1)\tau^2\Rr)\d \mu_\s(\tau)\d\fR(\brho),\quad\forall s \in\sS;\\
    \psi_{\lambda_\infty}(q) = \sum_{\s\in\sS}\lambda_{\infty,\s}\psi_{\mu_\s}(q_\s).\end{gather*}
Here, $(w^{q_\s}(\brho))_{\brho\in\supp\fR}$ is the real-valued process given as in~\eqref{e.E[ww]=} (for $\D=1$).

\begin{remark}\label{r.psi_mu_s}
It is important to notice that $\psi_{\mu_\s}(q_\s)$ is exactly $\psi(q)$ given in~\eqref{e.psi=} with $\D=1$ (set $\mcl Q_\infty$ to be $\mcl Q_\infty(1)$, $q=q_\s$ and $P_1=\mu_\s$).
Hence, results from previous sections are also valid for $\psi_{\mu_\s}$. \qed
\end{remark}

For each $\s\in\sS$, the derivative $\partial_{q_\s} \psi_{\mu_\s}(q_\s)\in \mcl Q_\infty(1)$ is defined in the same way as that for $\psi$ in~\eqref{e.psi=}. By~\cite[(4.18) and Lemma~4.9]{multi-sp}, $\psi_{\lambda_\infty}$ is differentiable at every $q\in \mcl Q_2^\sS$ and its derivative $\partial_q\psi_{\lambda_\infty}(q)$ satisfies
\begin{align}\label{e.d_qpsi_lambda=}
    \partial_q \psi_{\lambda_\infty}(q)= \Ll(\lambda_{\infty, \s} \, \partial_{q_\s} \psi_{\mu_\s}(q_\s)\Rr)_{\s\in\sS} \in \mcl Q^\sS_\infty.
\end{align}

For every $\lambda_\infty\in \blacktriangle_\infty$ and $(t,q)\in\R_+\times \mcl Q^\sS_2$, we consider the functional
\begin{align*}\mcl J_{\lambda_\infty,t,q}(q',p) = \psi_{\lambda_\infty}(q')+\int_0^1 p\cdot(q-q')+ t\int_0^1\xi(p)
\end{align*}
defined for every $q'\in Q^\sS_2$ and $p\in L^2([0,1],\R^\sS)$. Similarly to the vector case, we say that $(q',p) \in Q^\sS_2 \times L^2([0,1],\R^\sS)$ is a critical point of $\mcl J_{\lambda_\infty,t,q}$ if
\begin{equation*}  q = q' - t \nabla \xi(p) \quad \text{ and } \quad p = \partial_q \psi_{\lambda_\infty}(q'),
\end{equation*}
where we write $\nabla\xi = (\partial_\s \xi)_{\s\in\sS}$. 

Similarly to the vector case, we have that up to a small perturbation of the energy function and up to the extraction of a subsequence in $N$, and for $\sigma, \sigma'$ two independent samples from the Gibbs measure, the overlap $R_{N,\lambda_N, \s}(\sigma,\sigma')$ converges in law to $p(U)$, where $(q',p)$ is a critical point of $\mcl J_{\lambda_\infty,t,q}$ and $U$ is a uniform random variable on $[0,1]$. We refer to  \cite[Theorem~1.4]{multi-sp} for the precise statement.

For every $a,b\in \R^\sS$, we write $a\geq b$ if $a_\s\geq b_\s$ for every $\s\in\sS$. In the current multi-species setting, for $\s,\s'\in\sS$, we say that $\xi$ is \textbf{$\s$-to-$\s'$ coupled} provided that, for every $a,b\in\R^\sS_+$, we have
\begin{align*}a\geq b,\ a_\s>b_\s\qquad\Longrightarrow \qquad \partial_{\s'}\xi(a)>\partial_{\s'}\xi(b).
\end{align*}
\begin{theorem}[Simultaneous RSB in multi-species models]\label{t.sim_RSB_MS}
Let $p,q \in \mcl Q^\sS_\infty$ satisfy $p=\partial_q \psi_{\lambda_\infty}(q+t\nabla\xi(p))$ for some $\lambda_\infty\in\blacktriangle_\infty$, $t > 0$, and $\xi$.
Suppose that $\xi$ is $\s$-to-$\s'$ coupled for some $\s,\s'\in\sS$ and that $\mu_\s$ is not a Dirac mass at $0$. For every $0 < r < r' < 1$, if ${p_\s}(r')>{p_\s}(r)$, then ${p_{\s'}}(r')> {p_{\s'}}(r)$.
\end{theorem}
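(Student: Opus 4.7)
The plan is to reduce the multi-species statement to the scalar detection results of Section~\ref{s.detect}. Setting $q'=q+t\nabla\xi(p)$, the critical-point relation $p=\partial_q\psi_{\lambda_\infty}(q')$ decomposes, by~\eqref{e.d_qpsi_lambda=}, into scalar relations $p_{\s''}=\lambda_{\infty,\s''}\,\partial_{q_{\s''}}\psi_{\mu_{\s''}}(q'_{\s''})$ for each $\s''\in\sS$. By Remark~\ref{r.psi_mu_s}, each of these is an instance of the $D=1$ setup from Sections~\ref{s.cascade}--\ref{s.proof} with $P_1=\mu_{\s''}$, so Lemma~\ref{l.detect_gen_incre} and Corollary~\ref{c.jump_iff} are available species-wise.

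The first step is to push the strict increase of $p_\s$ forward to an increase of $q'_{\s'}$ using the coupling hypothesis. Because $p$ is entrywise increasing, $p(r')\geq p(r)$ componentwise with strict inequality in the $\s$-slot. The $\s$-to-$\s'$ coupling of $\xi$ then yields $\partial_{\s'}\xi(p(r'))>\partial_{\s'}\xi(p(r))$. Combined with $q_{\s'}(r')\geq q_{\s'}(r)$ (monotonicity of $q_{\s'}$) and $t>0$, this gives $q'_{\s'}(r')>q'_{\s'}(r)$.

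The second step is to pull this back to an increment of $p_{\s'}$ via the scalar detection machinery applied to the pair $(p_{\s'}/\lambda_{\infty,\s'},q'_{\s'})$, with $\mu_{\s'}$ playing the role of $P_1$. The non-degeneracy needed to invoke Lemma~\ref{l.convex_Phi}~\eqref{i.P_1_span} in dimension one, namely that the relevant spin distribution is not a Dirac mass, is supplied by the theorem's hypothesis. As at the end of the proof of Theorem~\ref{t.main}, I would then case-split according to whether the strict increase of $q'_{\s'}$ on $[r,r']$ occurs through a continuous portion or through a jump. In the former case one can find $r<u_1<u_2<r'$ with $\overrightarrow{q'_{\s'}}(u_2)>\overrightarrow{q'_{\s'}}(u_1)$, and the scalar Lemma~\ref{l.detect_gen_incre} yields $\overrightarrow{p_{\s'}}(u_2)>\overrightarrow{p_{\s'}}(u_1)$, hence $p_{\s'}(r')\geq \overrightarrow{p_{\s'}}(u_2)>\overrightarrow{p_{\s'}}(u_1)\geq p_{\s'}(r)$ by monotonicity. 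In the latter case, the jump produces some $s\in(0,1)$ with $\overrightarrow{q'_{\s'}}(s+)>\overrightarrow{q'_{\s'}}(s)$, and the scalar Corollary~\ref{c.jump_iff} transfers this jump to $\overrightarrow{p_{\s'}}$, yielding the same conclusion.

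The main technical obstacle I anticipate is the careful bookkeeping between the left- and right-continuous versions of the paths, exactly as in the closing paragraph of Theorem~\ref{t.main}'s proof. The detection lemmas are stated for the left-continuous versions, whereas the theorem's hypothesis and conclusion concern the right-continuous paths $q'_{\s'}$ and $p_{\s'}$; since both are monotone, the discrepancy is confined to at most countably many jump points, and the case split above handles it cleanly.
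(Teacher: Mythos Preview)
Your approach is essentially the paper's: decompose the critical-point relation species-wise via~\eqref{e.d_qpsi_lambda=}, push the increment of $p_\s$ to $q'_{\s'}$ through the $\s$-to-$\s'$ coupling, and then invoke the scalar detection lemmas (Lemma~\ref{l.detect_gen_incre} and Corollary~\ref{c.jump_iff}) on the pair $(\lambda_{\infty,\s'}^{-1}p_{\s'},\,q'_{\s'})$, with a left/right-continuous case split at the end. Your case split is on $q'_{\s'}$ rather than on $p_\s$ as in the paper, but this is a cosmetic reordering.

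One genuine slip: the spanning hypothesis in Lemma~\ref{l.detect_gen_incre} concerns the $P_1$ attached to the pair you are analyzing, which is $\mu_{\s'}$, not $\mu_\s$. You correctly name $\mu_{\s'}$ as playing the role of $P_1$, but then claim the theorem's hypothesis (stated for $\mu_\s$) supplies its non-degeneracy; it does not. The paper's own proof writes $\psi_{\mu_\s},\mu_\s$ in the substitution list, which is internally consistent with the stated hypothesis but does not match the relation $\lambda_{\infty,\s'}^{-1}p_{\s'}=\partial_{q_{\s'}}\psi_{\mu_{\s'}}(q'_{\s'})$ it just derived---so this appears to be a typo in the theorem statement and proof (the non-degeneracy assumption should be on $\mu_{\s'}$). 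Your write-up should state the correct assumption rather than inherit the mismatch.
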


\begin{proof}
Remark~\ref{r.psi_mu_s} allows us to apply the results from Section~\ref{s.detect}, stated for more general $\psi$ as in~\eqref{e.psi=}, to $\psi_{\mu_\s}$ here.
Using~\eqref{e.d_qpsi_lambda=}, we obtain $p_{\s'} = \lambda_{\infty,\s'}\partial_{q_{\s'}}\psi_{\mu_{\s'}}(q_{\s'}+t\partial_{\s'}\xi(p))$ from $p=\partial_q \psi_{\lambda_\infty}(q+t\nabla\xi(p))$.
Assuming that $\overrightarrow{p_\s}(r')>\overrightarrow{p_\s}(r)$ and using that $\xi$ is $\s$-to-$\s'$ coupled, we can apply Lemma~\ref{l.detect_gen_incre} with $p,q,\psi,P_1,D$ therein replaced by $\lambda_{\infty,\s'}^{-1}p_{\s'}$, $q_{\s'}+t\partial_{\s'}\xi(p)$, $\psi_{\mu_\s}$, $\mu_\s$, $1$ respectively to obtain that $\overrightarrow{p_{\s'}}(r') > \overrightarrow{p_{\s'}}(r)$.

This shows the announced result with $p$ replaced by its left-continuous version $\ap$. As in the proof of Theorem~\ref{t.main}, we can also obtain the statement with the right-continuous path $p$ by examining the case of a jump, i.e.\ we argue that if for some $r \in (0,1)$, we have $\overrightarrow{p_\s}(r+) > \overrightarrow{p_{\s}}(r)$, then $\overrightarrow{p_{\s'}}(r+) > \overrightarrow{p_{\s'}}(r)$. Indeed, this follows from Corollary~\ref{c.jump_iff}.
\end{proof}

\smallskip

\noindent \textbf{Funding.} HBC is funded by the Simons Foundation. 

\noindent
\textbf{Data availability.}
No datasets were generated during this work.

\noindent
\textbf{Conflict of interests.}
The authors have no conflicts of interest to declare.

\noindent
\textbf{Competing interests.}
The authors have no competing interests to declare.

\small
\bibliographystyle{plain}

\begin{thebibliography}{10}

\bibitem{auffinger2015properties}
Antonio Auffinger and Wei-Kuo Chen.
\newblock On properties of {P}arisi measures.
\newblock {\em Probab. Theory Related Fields}, 161(3-4):817--850, 2015.

\bibitem{auffinger2020sk}
Antonio Auffinger, Wei-Kuo Chen, and Qiang Zeng.
\newblock The {SK} model is infinite step replica symmetry breaking at zero
  temperature.
\newblock {\em Comm. Pure Appl. Math.}, 73(5):921--943, 2020.

\bibitem{auffinger2019existence}
Antonio Auffinger and Qiang Zeng.
\newblock Existence of two-step replica symmetry breaking for the spherical
  mixed {$p$}-spin glass at zero temperature.
\newblock {\em Comm. Math. Phys.}, 370(1):377--402, 2019.

\bibitem{bates2019replica}
Erik Bates, Leila Sloman, and Youngtak Sohn.
\newblock Replica symmetry breaking in multi-species
  {S}herrington-{K}irkpatrick model.
\newblock {\em J. Stat. Phys.}, 174(2):333--350, 2019.

\bibitem{bates2022free}
Erik Bates and Youngtak Sohn.
\newblock Free energy in multi-species mixed p-spin spherical models.
\newblock {\em Electronic Journal of Probability}, 27:1--75, 2022.

\bibitem{bates2022crisanti}
Erik Bates and Youngtak Sohn.
\newblock Crisanti-{S}ommers formula and simultaneous symmetry breaking in
  multi-species spherical spin glasses.
\newblock {\em Comm. Math. Phys.}, 394(3):1101--1152, \noop{}2022.

\bibitem{chen2024free}
Hong-Bin Chen.
\newblock Free energy in spin glass models with conventional order.
\newblock {\em J. Stat. Phys.}, 191(4):Paper No. 49, 19, 2024.

\bibitem{multi-sp}
Hong-Bin Chen.
\newblock On free energy of non-convex multi-species spin glasses.
\newblock {\em Preprint, arXiv:2411.13342}, 2024.

\bibitem{chen2023parisi}
Hong-Bin Chen.
\newblock Parisi {PDE} and convexity for vector spins.
\newblock {\em Stochastic Process. Appl.}, 190:Paper No. 104746, 27, 2025.

\bibitem{chen2023free}
Hong-Bin Chen and Jean-Christophe Mourrat.
\newblock On the free energy of vector spin glasses with nonconvex
  interactions.
\newblock {\em Probab. Math. Phys.}, 6(1):1--80, 2025.

\bibitem{chen2022hamilton}
Hong-Bin Chen and Jiaming Xia.
\newblock Hamilton-{J}acobi equations from mean-field spin glasses.
\newblock {\em Probab. Theory Related Fields}, 192(3-4):803--873, 2025.

\bibitem{dey2021fluctuation}
Partha~S. Dey and Qiang Wu.
\newblock Fluctuation results for multi-species {S}herrington-{K}irkpatrick
  model in the replica symmetric regime.
\newblock {\em J. Stat. Phys.}, 185(3):Paper No. 22, 40, 2021.

\bibitem{HJbook}
Tomas Dominguez and Jean-Christophe Mourrat.
\newblock {\em Statistical mechanics of mean-field disordered systems: a
  {H}amilton-{J}acobi approach}.
\newblock Zurich Lectures in Advanced Mathematics. European Mathematical
  Society, Z\"{u}rich, 2024.

\bibitem{jagannath2017low}
Aukosh Jagannath and Ian Tobasco.
\newblock Low temperature asymptotics of spherical mean field spin glasses.
\newblock {\em Comm. Math. Phys.}, 352(3):979--1017, 2017.

\bibitem{jagannath2018bounds}
Aukosh Jagannath and Ian Tobasco.
\newblock Bounds on the complexity of replica symmetry breaking for spherical
  spin glasses.
\newblock {\em Proc. Amer. Math. Soc.}, 146(7):3127--3142, 2018.

\bibitem{karatzas1991brownian}
Ioannis Karatzas and Steven Shreve.
\newblock {\em Brownian motion and stochastic calculus}, volume 113.
\newblock Springer Science \& Business Media, 1991.

\bibitem{ko2019crisanti}
Justin Ko.
\newblock The {Crisanti-Sommers} formula for spherical spin glasses with vector
  spins.
\newblock {\em {Preprint, arXiv:1911.04355}}, 2019.

\bibitem{montanari2003nature}
Andrea Montanari and Federico Ricci-Tersenghi.
\newblock On the nature of the low-temperature phase in discontinuous
  mean-field spin glasses.
\newblock {\em Eur. Phys. J. B}, 33:339--346, 2003.

\bibitem{mourrat2021nonconvex}
Jean-Christophe Mourrat.
\newblock Nonconvex interactions in mean-field spin glasses.
\newblock {\em Probab. Math. Phys.}, 2(2):281--339, 2021.

\bibitem{mourrat2022parisi}
Jean-Christophe Mourrat.
\newblock The {P}arisi formula is a {H}amilton-{J}acobi equation in
  {W}asserstein space.
\newblock {\em Canad. J. Math.}, 74(3):607--629, 2022.

\bibitem{mourrat2023free}
Jean-Christophe Mourrat.
\newblock Free energy upper bound for mean-field vector spin glasses.
\newblock {\em Ann. Inst. Henri Poincar\'{e} Probab. Stat.}, 59(3):1143--1182,
  2023.

\bibitem{mourrat2020extending}
Jean-Christophe Mourrat and Dmitry Panchenko.
\newblock Extending the {P}arisi formula along a {H}amilton-{J}acobi equation.
\newblock {\em Electronic Journal of Probability}, 25:Paper No. 23, 17, 2020.

\bibitem{pan}
Dmitry Panchenko.
\newblock {\em The {S}herrington-{K}irkpatrick model}.
\newblock Springer Monographs in Mathematics. Springer, New York, 2013.

\bibitem{pan.multi}
Dmitry Panchenko.
\newblock The free energy in a multi-species {S}herrington-{K}irkpatrick model.
\newblock {\em Ann. Probab.}, 43(6):3494--3513, 2015.

\bibitem{pan.potts}
Dmitry Panchenko.
\newblock Free energy in the {P}otts spin glass.
\newblock {\em Ann. Probab.}, 46(2):829--864, 2018.

\bibitem{pan.vec}
Dmitry Panchenko.
\newblock Free energy in the mixed {$p$}-spin models with vector spins.
\newblock {\em Ann. Probab.}, 46(2):865--896, \noop{2019}2018.

\bibitem{talagrand2000multiple}
Michel Talagrand.
\newblock Multiple levels of symmetry breaking.
\newblock {\em Probab. Theory Related Fields}, 117(4):449--466, 2000.

\bibitem{talagrand2006free}
Michel Talagrand.
\newblock Free energy of the spherical mean field model.
\newblock {\em Probab. Theory Related Fields}, 134(3):339--382, 2006.

\bibitem{talagrand2006parisi}
Michel Talagrand.
\newblock Parisi measures.
\newblock {\em J. Funct. Anal.}, 231(2):269--286, 2006.

\bibitem{Tbook1}
Michel Talagrand.
\newblock {\em Mean field models for spin glasses. {V}olume {I}}, volume~54 of
  {\em Ergebnisse der Mathematik und ihrer Grenzgebiete}.
\newblock Springer-Verlag, Berlin, 2011.

\bibitem{Tbook2}
Michel Talagrand.
\newblock {\em Mean field models for spin glasses. {V}olume {II}}, volume~55 of
  {\em Ergebnisse der Mathematik und ihrer Grenzgebiete}.
\newblock Springer, Heidelberg, 2011.

\end{thebibliography}
\newcommand{\noop}[1]{} \def\cprime{$'$}

\end{document}